\newcommand{\urltilde}{\kern -.15em\lower .7ex\hbox{~}\kern .04em}
\renewcommand{\@makecaption}[2]{
\vspace{\abovecaptionskip}%
\sbox{\@tempboxa}{#1. #2}%
\global\@minipagefalse \hbox to \hsize {{\scshape \hfil #1.
#2\hfil}} \vspace{\belowcaptionskip}}
\newcommand{\Hom}{\operatorname{Hom}}
\newcommand{\rk}{\operatorname{rk}}
\newcommand{\Ker}{\operatorname{Ker}}
\newcommand{\Lie}{\operatorname{Lie}}
\newcommand{\Der}{\operatorname{Der}}
\newcommand{\GL}{\operatorname{GL}}
\newcommand{\SL}{\operatorname{SL}}
\newcommand{\Aut}{\operatorname{Aut}}
\newcommand{\ord}{\operatorname{ord}}
\newcommand{\Spec}{\operatorname{Spec}}
\renewcommand{\Im}{\operatorname{Im}}
\newcommand{\ZZ}{\mathbb Z}
\newcommand{\QQ}{\mathbb Q}
\newcommand{\PP}{\mathbb P}
\newcommand{\KK}{\mathbb K}
\newcommand{\GG}{\mathbb G}
\renewcommand{\AA}{\mathbb A}
\newtheorem{theorem}{Theorem}
\newtheorem{proposition}[theorem]{Proposition}
\newtheorem{lemma}[theorem]{Lemma}
\newtheorem{corollary}[theorem]{Corollary}
\newtheorem*{question*}{Question}
\theoremstyle{definition}
\newtheorem{definition}[theorem]{Definition}
\theoremstyle{remark}
\newtheorem{remark}[theorem]{Remark}
\numberwithin{equation}{section}
\numberwithin{equation}{section}
\newcounter{num}[table]
\newcommand{\no}{\refstepcounter{num}\arabic{num}}
\begin{document}

\date{}
\title[Root subgroups on horospherical varieties]{Root subgroups on horospherical varieties}
%
\author{Roman Avdeev}
\address{%
{\bf Roman Avdeev} \newline\indent HSE University, Moscow, Russia}
\email{suselr@yandex.ru}

\author{Vladimir Zhgoon}
\address{%
{\bf Vladimir Zhgoon} \newline\indent Moscow Institute of Physics and Technology. Dolgoprudnyi, Moscow region, Russia \newline\indent Scientific Research Institute for System Analysis of the Russian Academy of Sciences, Moscow, Russia \newline\indent HSE University, Moscow, Russia}
\email{zhgoon@mail.ru}

\subjclass[2020]{14R20, 14M27, 14M25, 13N15}

\keywords{Additive group action, toric variety, spherical variety, Demazure root, locally nilpotent derivation}

\begin{abstract}
Given a connected reductive algebraic group $G$ and a spherical $G$-variety~$X$, a $B$-root subgroup on $X$ is a one-parameter additive group of automorphisms of $X$ normalized by a Borel subgroup $B \subset G$.
We obtain a complete description of all $B$-root subgroups on a certain open subset of~$X$.
When $X$ is horospherical, we extend the construction of standard $B$-root subgroups introduced earlier by Arzhantsev and Avdeev for affine~$X$ and obtain a complete description of all standard $B$-root subgroups, which naturally generalizes the well-known description of root subgroups on toric varieties.
As an application, for horospherical $X$ that is either complete or contains a unique closed $G$-orbit, we determine all $G$-stable prime divisors in~$X$ that can be connected with the open $G$-orbit via the action of a suitable $B$-root subgroup.
For horospherical~$X$, we also find sufficient conditions for the existence of $B$-root subgroups on~$X$ that preserve the open $B$-orbit in~$X$.
Finally, when $G$ is of semisimple rank~$1$ and $X$ is horospherical and complete, we determine all $B$-root subgroups on~$X$, which enables us to describe the Lie algebra of the connected automorphism group of~$X$.
\end{abstract}

\maketitle


\section*{Introduction}
\label{sec0}

Let $X$ be an irreducible algebraic variety defined over an algebraically closed field $\KK$ of characteristic zero.
Every nontrivial action of the additive group $\GG_a = (\KK,+)$ on $X$ determines a subgroup $H$ in the automorphism group $\Aut(X)$, called a \textit{$\GG_a$-subgroup} on~$X$.
If $X$ is equipped with a regular action of an algebraic group~$F$ and $H$ is normalized by~$F$, then $H$ is called an \textit{$F$-root subgroup}.
In this case, $F$ acts on the Lie algebra of $H$ via a character, called the \textit{weight} of~$H$.

The most known case of the above situation appears in the theory of toric varieties.
Recall that a normal irreducible algebraic variety $X$ is called \textit{toric} if it is equipped with an action of an algebraic torus $T$ such that $X$ has an open $T$-orbit.
Toric varieties admit a complete combinatorial description in terms of objects of convex geometry called fans; see \cite{CLS, Fu, Oda}.
Moreover, there is a complete description of $T$-root subgroups on any given toric $T$-variety~$X$; see \cite{Dem, Oda}.
It turns out that every $T$-root subgroup on $X$ is uniquely determined by its weight and all weights appearing in this way form the set of so-called \textit{Demazure roots} of the associated fan.

A natural generalization of toric varieties in the setting of actions of arbitrary connected reductive groups is given by spherical varieties.
By definition, a normal irreducible algebraic variety $X$ is called \textit{spherical} if it is equipped with an action of a connected reductive algebraic group $G$ such that $X$ has an open orbit for the induced action of a Borel subgroup~$B \subset G$.
It was proposed in~\cite{AA} that a proper generalization of $T$-root subgroups for spherical varieties is given by $B$-root subgroups.
The same paper~\cite{AA} also initiated a systematic study of $B$-root subgroups on affine spherical varieties, which was continued in~\cite{AZ}.
An important result proved in~\cite{AZ} states that, given an arbitrary affine spherical $G$-variety~$X$ with open $G$-orbit~$O$, every $G$-stable prime divisor in~$X$ can be connected with~$O$ via the action of a suitable $B$-root subgroup.

In this paper, we study $B$-root subgroups on arbitrary (not necessarily affine) spherical $G$-varieties.
Our key ingredient is the local structure theorem (see~\cite{BLV, Kn94}), which describes the action of a certain parabolic subgroup $P \subset G$ on a distinguished open subset $X_0 \subset X$.
Our first result provides a complete description of all $B$-root subgroups on~$X_0$, which generalizes a similar result from~\cite{AZ} for the case of affine~$X$; see \S\,\ref{subsec_local_descr}.

Given a $B$-root subgroup $H$ on the open subset $X_0 \subset X$, an important problem is to determine whether it extends to a $B$-root subgroup on the whole~$X$.
Solving this problem can be divided into two stages: first, one needs to determine whether the vector field on~$X_0$ corresponding to~$H$ extends to~$X$, and second, one needs to check whether the resulting vector field on~$X$ can be integrated to a $B$-root subgroup on~$X$.
We remark that the latter integrability condition holds automatically if $X$ is either affine or complete.
In this paper, we focus on these issues in the case where $X$ is horospherical, that is, the stabilizer of a point in the open $G$-orbit $O \subset X$ contains a maximal unipotent subgroup of~$G$.

An important construction introduced in~\cite{AA} for affine horospherical varieties is that of standard $B$-root subgroups.
We extend this notion to arbitrary horospherical~$X$ and obtain a complete description of all standard $B$-root subgroups on~$X$, which remarkably generalizes the description of $T$-root subgroups on toric $T$-varieties.
As an application, if $X$ is horospherical and either complete or contains a unique closed $G$-orbit, we determine all $G$-stable prime divisors in $X$ that can be connected with $O$ via the action of a $B$-root subgroup.

A $B$-root subgroup on a spherical $G$-variety $X$ is said to be \textit{vertical} if it preserves the open $B$-orbit $O \subset X$ and \textit{horizontal} otherwise.
In this terminology, all $T$-root subgroups on toric $T$-varieties are horizontal.
All standard $B$-root subgroups on horospherical $G$-varieties are also horizontal.

One more contribution of this paper for horospherical $X$ consists in sufficient conditions under which a vector field corresponding to a vertical $B$-root subgroup on~$X_0$ extends to the whole~$X$.
The gap between these conditions and natural necessary conditions turns out to be rather small and observable, and it would be interesting to completely eliminate it.

We also study in detail the particular case where $G$ is of semisimple rank~$1$ (that is, up to a finite covering, $G$ is isomorphic to a direct product of $\SL_2$ and a torus) and $X$ is horospherical.
In this case, we obtain a complete description of all vertical $B$-root subgroups on~$X$.
If $X$ is complete, then, combining this with the description of all standard $B$-root subgroups, we obtain a complete description of all (both vertical and horizontal) $B$-root subgroups on~$X$.
Moreover, for complete $X$ we compute all commutation relations between simple $G$-modules of vector fields on~$X$ generated by vector fields corresponding to $B$-root subgroups.
This provides a description of the Lie algebra of the connected automorphism group of~$X$.

This paper is organized as follows.
In \S\,\ref{sec_prelim} we collect all basic notions and results needed in this paper.
In \S\,\ref{sec_DR_and_RS} we discuss toric varieties, Demazure roots, and the description of all $T$-root subgroups on arbitrary toric $T$-varieties.
Although the latter description is well known to specialists, we are not aware of any source providing a presentation of it in full generality, therefore we made a self-contained exposition with complete proofs.
Several ideas and techniques involved in this description are then used later in this paper for general spherical varieties.
In \S\,\ref{sec_gen_spher} we collect all the needed notions and facts on spherical and horospherical varieties.
In \S\,\ref{sec_basic_prop} we study basic properties of $B$-root subgroups on an arbitrary spherical $G$-variety $X$ and obtain out description of all $B$-root subgroups on the open subset $X_0 \subset X$ mentioned above.
In \S\,\ref{sec_hor_stand} we discuss standard $B$-root subgroups on horospherical $G$-varieties and obtain a complete description of them.
In \S\,\ref{sec_hor_vert} we obtain our sufficient conditions for vector fields of vertical $B$-root subgroups to extend from~$X_0$ to~$X$.
In \S\,\ref{sec_ssrank1} we work out the case where $G$ is of semisimple rank~$1$ for horospherical~$X$.
In Appendix~\ref{sec_matrix_exponentials} we present several identities for matrix exponentials needed in \S\,\ref{sec_hor_vert}.

\subsection*{Acknowledgements}

The work of Roman Avdeev was supported by the Russian Science Foundation (grant no.~22-41-02019).
The work of Vladimir Zhgoon was supported by Moscow Institute of Physics and Technology under the Priority 2030 Strategic Academic Leadership Program, by the state assignment for basic scientific research (project no. FNEF-2024-0001), and by the HSE University Basic Research Program.
The authors thank Ivan Arzhantsev and Dmitry Timashev for useful discussions.


\section{Preliminaries}
\label{sec_prelim}

\subsection{Some notation and conventions}

Throughout this paper, we work over an algebraically closed field $\KK$ of characteristic zero.
The notation $\KK^\times$ stands for the multiplicative group $(\KK \setminus \lbrace 0 \rbrace, \times)$.
The additive group $(\KK, +)$ is denoted by~$\GG_a$.
The character group of an algebraic group $G$ is denoted by $\mathfrak X(G)$ and used in additive notation.
All topological terms relate to the Zariski topology.
Given an irreducible algebraic variety~$X$, the group of its regular automorphisms is denoted by $\Aut(X)$ and the notation $\KK[X]$ (resp. $\KK(X))$ stands for the algebra of regular functions (resp. field of rational functions) on~$X$.

If an algebraic group $G$ acts on an algebraic variety~$X$, then the induced action of $G$ on~$\KK[X]$ and on~$\KK(X)$ is given by the formula $(gf)(x) = f(g^{-1}x)$ for all $g \in G$, $f \in \KK[X]$, and $x \in X$.

\subsection{\texorpdfstring{$\GG_a$}{G\_a}-subgroups, vector fields, and LND's}

Let $X$ be an irreducible algebraic variety.
Every nontrivial $\GG_a$-action on $X$ induces an algebraic subgroup in $\Aut(X)$, called a \textit{$\GG_a$-subgroup}.
Every $\GG_a$-subgroup on~$X$ induces a vector field on~$X$ (defined uniquely up to proportionality).
Every vector field on~$X$ obtained in this way will be called \textit{$\GG_a$-integrable}.
Since vector fields are sections of the tangent sheaf, \cite[Lemma~3.9]{Br13} implies the following important extension result: if $X$ is normal and $\xi$ is a vector field on an open subset~$X_0 \subset X$ whose complement $X \setminus X_0$ has codimension $\ge2$ in~$X$, then $\xi$ extends to a vector field on the whole~$X$.

Given a $\GG_a$-subgroup $H$ on~$X$, every nonzero element of the Lie algebra $\Lie(H)$ defines a locally nilpotent derivation (LND for short) $\partial$ on~$\KK[X]$.
If $X$ is quasi-affine then $H$ is recovered from $\partial$ by taking the exponent.

If $A = \KK[X]$ for an affine algebraic variety~$X$, for any LND $\partial$ on $A$ the map
\begin{equation} \label{eqn_phi_d}
\varphi_{\partial} \colon \GG_a\times A\rightarrow A, \quad (s,a) \mapsto \exp(s\partial)(a),
\end{equation}
defines a rational $\GG_a$-algebra structure on~$A$, hence induces a $\GG_a$-action on~$X$.
In fact, by~\cite[\S\,1.5]{Fr} any $\GG_a$-action on~$X$ arises this way, which yields

\begin{proposition} \label{prop_Ga-actions}
Given an affine variety~$X$, the map $\partial \mapsto \varphi_\partial$ induces a bijection between the nonzero LNDs on $\KK[X]$ modulo proportionality and the $\GG_a$-subgroups on~$X$.
\end{proposition}

\subsection{Root subgroups}

Now suppose that $X$ is equipped with a regular action of an algebraic group~$F$.
An \textit{$F$-root subgroup} on~$X$ is a $\GG_a$-subgroup on $X$ normalized by the action of~$F$.
Given an $F$-root subgroup $H$ on~$X$, $F$ acts on $\Lie(H)$ via multiplication by a character $\chi_H \in \mathfrak X(F)$, called the \textit{weight} of~$H$.

If $H$ is a $\GG_a$-subgroup on~$X$ and $\xi$ is the corresponding vector field on~$X$, then $H$ is an $F$-subgroup if and only if $\xi$ is $F$-semiinvariant with the same weight.

If $X$ is quasi-affine, $H$ is a $\GG_a$-subgroup on~$X$, and $\partial$ is an LND on $\KK[X]$ corresponding to~$H$ then $H$ is an $F$-subgroup if and only if $\partial$ is $F$-normalized with the same weight.

Let $X$ be arbitrary (not necessarily quasi-affine) and let $H$ be an $F$-root subgroup on~$X$.
Given a prime divisor $D \subset X$, we say that $F$ \textit{moves} $D$ (or $D$ is \textit{moved} by~$H$) if $HD \ne D$, that is, $D$ is $H$-unstable.
The following result was obtained in \cite[Prop.~2.6]{AA}.

\begin{proposition} \label{prop_moved_divisor}
Suppose that $F$ is connected and has an open orbit $O_F$ in~$X$.
If $O_F$ is not preserved by~$H$ then there is exactly one $F$-stable prime divisor in~$X$ moved by~$H$.
\end{proposition}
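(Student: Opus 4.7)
The proof splits into existence and uniqueness of the moved divisor.

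For existence, I argue by contradiction: assume every $F$-stable prime divisor is $H$-stable. Let $W \subset X$ be the complement of their union; then $W$ is $F$- and $H$-invariant open, and $W \setminus O_F$ has codimension $\geq 2$ in $X$. For generic $x \in O_F$ the $1$-dimensional orbit $H x$ lies in $W$ and, being in generic position, cannot meet a codimension $\geq 2$ subset, so $H x \subseteq O_F$. The set of such $x$ is $F$-invariant inside the single $F$-orbit $O_F$, hence equals $O_F$, forcing $H \cdot O_F = O_F$ and contradicting the hypothesis.

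For uniqueness, the plan is to show that the exit locus $H O_F \setminus O_F$ is a single $F$-orbit of dimension $\dim X - 1$. A standard fiber dimension computation with $\mu \colon H \times O_F \to X$, $(h, x) \mapsto h x$, gives $\dim(H O_F \setminus O_F) = \dim X - 1$, and a moved $F$-stable prime divisor is precisely one whose open $F$-orbit sits in this locus. Fix a generic $x_0 \in O_F$, let $S_{x_0} = \{t \in \KK : \varphi_t(x_0) \notin O_F\} \subset H \cong \AA^1$ (finite and nonempty by hypothesis), and let $K = \{f \in F : f x_0 \in H x_0\}$ be the $F$-stabilizer of $H x_0$. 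Because $F$ normalizes $H$ with weight $\chi_H$, the formula $f x_0 = \varphi_{a(f)}(x_0)$ defines a cocycle $a \colon K \to \KK$, and $K$ acts on $H x_0 \cong \AA^1$ by the affine transformations $t \mapsto \chi_H(f)\,t + a(f)$, yielding a homomorphism $K \to \mathrm{Aff}(1) = \GG_a \rtimes \GG_m$. The $K$-orbit of $x_0$ coincides with $O_F \cap H x_0 = \AA^1 \setminus S_{x_0}$, which together with $K_{x_0} = F_{x_0}$ gives $\dim K = \dim F_{x_0} + 1$; hence the $K^\circ$-orbit of $x_0$ is a $1$-dimensional connected open subset of $\AA^1$ disjoint from the nonempty finite set $S_{x_0}$.

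The crux is then the classification of $1$-dimensional connected algebraic subgroups of $\mathrm{Aff}(1)$: the translation subgroup $\GG_a$ acts transitively on $\AA^1$, which is incompatible with avoiding a nonempty finite set, whereas each $\GG_m$-type subgroup fixes a unique point of $\AA^1$ and is transitive on the complement. Hence the image of $K^\circ$ in $\mathrm{Aff}(1)$ must be of $\GG_m$-type; its unique fixed point $p_0$ must contain $S_{x_0}$, so $S_{x_0} = \{p_0\}$. Since $H O_F = F \cdot H x_0$ (from $F H = H F$), this yields $H O_F \setminus O_F = F \cdot \varphi_{p_0}(x_0)$, a single $F$-orbit of dimension $\dim X - 1$, whose closure is the unique moved $F$-stable prime divisor. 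The main obstacle is this classification step: correctly identifying the image of $K^\circ$ in $\mathrm{Aff}(1)$ and using the nonemptiness of $S_{x_0}$ to exclude the translation case.
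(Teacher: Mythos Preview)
The paper does not prove this proposition; it is quoted from \cite[Prop.~2.6]{AA} without argument, so there is no proof here to compare yours against. Your argument is a correct self-contained proof.

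Two small points are worth tightening. In the existence half, the sentence ``being in generic position, cannot meet a codimension $\ge 2$ subset'' deserves one line of justification: the action map $H\times W\to W$ is flat of relative dimension~$1$, so the preimage of $W\setminus O_F$ has dimension at most $\dim W-1<\dim(H\times O_F)$ and hence cannot dominate~$O_F$ under the second projection. In the uniqueness half you should note that $K$ is in fact a \emph{closed} subgroup of~$F$ (it coincides with the stabiliser of the closed curve $\overline{Hx_0}$, since the open $H$-orbit in that curve is unique), so that $K\to\mathrm{Aff}(1)$ is a morphism of algebraic groups and the image of $K^\circ$ is a connected algebraic subgroup; you should also explicitly rule out the case where this image is two-dimensional, i.e.\ all of $\mathrm{Aff}(1)^\circ$, by the same obstruction you use for $\GG_a$ (the orbit of $x_0$ would then be all of~$\AA^1$, contradicting $S_{x_0}\ne\varnothing$). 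Finally, the ``standard fiber dimension computation'' giving $\dim(HO_F\setminus O_F)=\dim X-1$ is cleanest \emph{after} you know $|S_{x_0}|=1$: either observe that $\mu^{-1}(X\setminus O_F)\to O_F$ is finite surjective while $\mu^{-1}(X\setminus O_F)\to HO_F\setminus O_F$ has one-dimensional fibres, or compute $\dim F_{y_0}$ directly from the inclusions $K^\circ\subseteq F_{y_0}\subseteq K$.
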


\subsection{Torus actions and gradings}
\label{ssec_torus_actions_and_gradings}

Let $T$ be an algebraic torus and let $Z$ be a normal irreducible $T$-variety.
The \textit{weight lattice}~$M(Z)$ (resp. \textit{weight monoid}~$\Gamma(Z)$) is the set of weights of all $T$-semiinvariant functions in~$\KK(Z)$ (resp. in~$\KK[Z])$.

In what follows we assume that $Z$ is affine.
Then the induced action of $T$ on~$\KK[Z]$ yields a grading
\begin{equation} \label{eqn_Tgrading}
\KK[Z] = \bigoplus \limits_{\lambda \in \mathfrak X(T)} \KK[Z]_\lambda,
\end{equation}
where $\KK[Z]_\lambda$ is the subspace of $T$-semi-invariant functions in~$\KK[Z]$ of weight~$\lambda$.
Observe that $\Gamma(Z) = \lbrace \lambda \in \mathfrak X(T) \mid \KK[Z]_\lambda \ne \lbrace 0 \rbrace \rbrace$.
Conversely, every grading of $\KK[Z]$ of the form~(\ref{eqn_Tgrading}) defines an action of $T$ on~$\KK[Z]$ and hence on~$Z$, so that there is a natural bijection between $T$-actions on~$Z$ and gradings by $\mathfrak X(T)$ on~$\KK[Z]$.

A derivation $\partial$ on~$\KK[Z]$ is said to be \textit{homogeneous} if for every $\lambda \in \Gamma$ there is $\lambda' \in \Gamma$ such that $\partial(\KK[Z]_\lambda) \subset \KK[Z]_{\lambda'}$.
It follows from the definition that for a homogeneous derivation $\partial$ on~$\KK[Z]$ there exists a unique weight $\mu \in M(Z)$ such that $\partial(\KK[Z]_\lambda) \subset \KK[Z]_{\lambda+\mu}$ for all $\lambda \in \Gamma(Z)$.
This $\mu$ is said to be the \textit{weight} of the homogeneous derivation~$\partial$.

It is easy to check that an LND $\partial$ on~$\KK[Z]$ is homogeneous of weight~$\mu$ if and only if $\partial$ is $T$-normalized of weight~$\mu$.

\subsection{Extension results for vector fields and group actions}

Let $K$ be a connected algebraic group and consider the Lie algebra $\mathfrak k = \Lie(K)$. 
The next result is extracted from \cite[\S\,1]{LV}; see also \cite[\S\,12.2]{Tim}.

\begin{proposition} \label{prop_LV_ext}
Let $U$ be an irreducible $K$-variety and consider the $\mathfrak k$-action on $U$ by vector fields.
Let $X$ be an irreducible variety containing $U$ as an open subset and suppose that the $\mathfrak k$-action on $U$ extends to~$X$.
Then there is another irreducible variety $\widehat X$ containing $X$ as an open subset such that the $K$-action on $U$ extends to a $K$-action on~$\widehat X$.
In particular, if $X$ is complete then $\widehat X = X$ and the $K$-action on $U$ extends to~$X$.
\end{proposition}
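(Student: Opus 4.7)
The plan is to follow the standard Luna--Vust strategy.

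\emph{Rational extension.} The $K$-action $a \colon K \times U \to U$ composed with the open embedding $U \hookrightarrow X$ is a morphism $K \times U \to X$, and because $K \times U$ is open and dense in $K \times X$, it determines a rational map $\alpha \colon K \times X \dashrightarrow X$. For each $k \in K$, the slice $\phi_k = \alpha(k, \cdot) \colon X \dashrightarrow X$ is a birational self-map restricting to the given automorphism of~$U$.

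\emph{Enlarging the domain of $\alpha$.} The next step is to use the $\mathfrak{k}$-hypothesis to show that $\alpha$ is regular on an open subset $W \subset K \times X$ containing $\{e\} \times X$, and thus, by the group law on $\alpha$ on its domain of regularity, on a $K$-saturated such open. Each $\xi \in \mathfrak{k}$ provides a global vector field on $X$ coinciding on $U$ with the fundamental vector field of the $K$-action, so these vector fields integrate formally to a $K$-action on the formal neighborhood of any point of~$X$. Combined with the already-given rational map $\alpha$, and using the smoothness of $K$, this shows that $\alpha$ extends to a regular morphism on a $K$-saturated open $W \subset K \times X$ containing $\{e\} \times X$.

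\emph{Gluing.} One then covers $K$ by open subsets $K_i$, each a translate of a fixed open neighborhood of~$e \in K$ by some $g_i \in K$, so that left multiplication by~$g_i$ sends $K_i \times X$ into~$W$. Over each $K_i$ this yields a regular morphism $a_i \colon K_i \times X \to X$. Gluing copies of~$X$ indexed by the $K_i$ along overlaps $K_i \cap K_j$ via the identifications forced by~$\alpha$ produces an irreducible variety $\widehat X$ containing $X$ as an open subset, on which $K$ acts regularly by left multiplication transported through the gluing. The main obstacle here is verifying that $\widehat X$ is separated; this follows because the gluing data come from the single rational action~$\alpha$, and the group law holds wherever $\alpha$ is defined, forcing any two overlapping identifications to agree on a dense open (and hence everywhere).

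\emph{Complete case.} If $X$ is complete, the open embedding $X \hookrightarrow \widehat X$ is proper and therefore also a closed immersion; by the irreducibility of~$\widehat X$, we conclude $\widehat X = X$, so the $K$-action extends to all of~$X$.
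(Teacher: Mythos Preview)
The paper does not give its own proof of this proposition; it merely states the result as extracted from \cite[\S\,1]{LV} (with \cite[\S\,12.2]{Tim} as a secondary reference). Your outline is precisely the Luna--Vust regularization argument carried out in those references, so there is nothing in the paper to compare against---you are reconstructing the cited proof rather than an argument the authors supply. As a sketch it is faithful to the source; the only place where you are noticeably informal is the step ``Enlarging the domain of $\alpha$,'' where the passage from the extended $\mathfrak k$-vector fields to regularity of $\alpha$ along $\{e\}\times X$ and the subsequent construction of the $K$-saturated open set require somewhat more care than an appeal to formal integration, and the separatedness of $\widehat X$ in the gluing step is handled in \cite{LV} by an explicit description of $\widehat X$ as a quotient of $K\times X$ rather than by the heuristic you give.
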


\begin{proposition} \label{prop_LV_ext2}
Under the hypotheses of Proposition~\textup{\ref{prop_LV_ext}} assume that $\mathfrak k$ acts trivially on $X \setminus U$.
Then the action of $K$ extends to $X$ in such a way that $K$ acts trivially on $X \setminus U$.
\end{proposition}

\subsection{Connected automorphism groups of complete rational varieties}
\label{ssec_aut_groups}

Let $X$ be a complete rational normal irreducible variety and let $A = \Aut(X)^0$ be the connected component of the identity of the automorphism group of~$X$.
It is known that $A$ is a linear algebraic group; let $\mathfrak a$ be its Lie algebra.

Now assume in addition that $X$ is a $G$-variety for a connected reductive algebraic group~$G$.
Without loss of generality we may assume that $G$ acts effectively on~$X$, so that there is an inclusion $G \subset A$.
In this paper we shall need the following structure result.

\begin{proposition} \label{prop_aut_group_dec}
There is a $G$-module decomposition
\begin{equation} \label{eqn_aut_group}
\mathfrak a = \mathfrak g \oplus \mathfrak c \oplus \bigoplus \limits_{i=1}^k \mathfrak a_i
\end{equation}
where $\mathfrak c$ is the Lie algebra of a subtorus $C \subset A$ centralizing~$G$ and each $\mathfrak a_i$ is a simple $G$-module whose highest weight vector with respect to~$B$ is a nilpotent element of~$\mathfrak a$.
\end{proposition}

\section{Demazure roots and root subgroups on toric varieties}
\label{sec_DR_and_RS}

\subsection{Cones, fans, and Demazure roots}
\label{subsec_Dem_roots}

Let $M$ be a lattice of finite rank and consider the dual lattice $N = \Hom_\ZZ(M, \ZZ)$ along with the natural pairing $\langle \cdot\,, \cdot \rangle \colon N \times M \to \ZZ$.
Consider also the rational vector spaces $M_\QQ = M \otimes_\ZZ \QQ$ and $N_\QQ = N \otimes_\ZZ \QQ$ and extend the pairing to a bilinear map $\langle \cdot\,, \cdot \rangle \colon N_\QQ \times M_\QQ \to \QQ$.

In what follows, by a cone in $N_\QQ$ (or in $M_\QQ$) we mean a finitely generated (or, equivalently, polyhedral) convex cone.

Let $\mathcal C$ be a cone in~$N_\QQ$.
It is said to be \textit{strictly convex} if $\mathcal C \cap (-\mathcal C) = \lbrace 0 \rbrace$, that is, $\mathcal C$~contains no nonzero subspaces of~$N_\QQ$.
The \textit{dimension} of $\mathcal C$ is that of its linear span.
The \textit{dual cone} of $\mathcal C$ is
\[
\mathcal C^{\vee} := \lbrace x \in M_{\QQ} \mid \langle v, x\rangle\ge 0 \ \text{for all} \ v \in \mathcal C \rbrace;
\]
this is a cone in~$M_\QQ$.
A \textit{face} of $\mathcal C$ is a subset $\mathcal C' \subseteq \mathcal C$ of the form
\[
\mathcal C' = \lbrace v \in \mathcal C \mid \langle v, x \rangle = 0 \rbrace
\]
for some $x \in \mathcal C^\vee$.
Every face of $\mathcal C$ is a cone itself.
Note that every cone is a face of its own.
A face of codimension one is called a \textit{facet}.
A face of dimension one of a strictly convex cone is called a \textit{ray}.

Let $\mathcal C \subset N_\QQ$ be a strictly convex cone.
Let $\mathcal C^1$ be the set of primitive elements $\rho$ of the lattice $N$ such that $\QQ_{\ge0}\rho$ is a ray of~$\mathcal C$.
Observe that every face of $\mathcal C$ is generated by a subset of~$\mathcal C^1$.

\begin{definition} \label{def_DR_cone}
An element $\mu \in M$ is said to be a \textit{Demazure root} of the cone $\mathcal C$ if there exists $\rho_\mu \in \mathcal C^1$ such that $\langle \rho_\mu, \mu \rangle = -1$ and $\langle \rho, \mu \rangle \ge 0$ for all $\rho \in \mathcal C^1 \setminus \lbrace \rho_\mu \rbrace$.
\end{definition}

Let $\mathfrak R(\mathcal C)$ denote the set of all Demazure roots of~$\mathcal C$.

\begin{lemma} \label{lemma_faces1}
Let $\mathcal C \subset N_\QQ$ be a strictly convex cone, $\mu \in \mathfrak R(\mathcal C)$, and $\rho_\mu \in \mathcal C^1$ the corresponding element.
\begin{enumerate}[label=\textup{(\alph*)},ref=\textup{\alph*}]
\item \label{lemma_faces1_a}
Suppose $\mathcal K$ is a face of $\mathcal C$ such that $\langle \mathcal K, \mu \rangle = 0$ and $\widetilde{\mathcal K}$ is the cone generated by $\mathcal K$ and~$\rho_\mu$.
Then $\widetilde{\mathcal K}$ is a face of~$\mathcal C$.

\item \label{lemma_faces1_b}
Suppose $\widetilde{\mathcal K}$ is a face of $\mathcal C$ such that $\langle \widetilde{\mathcal K}, \mu \rangle \le 0$ and $\rho_\mu \in \widetilde{\mathcal K}^1$.
Then the cone $\mathcal K = \lbrace v \in \widetilde{\mathcal K} \mid \langle v, \mu \rangle = 0 \rbrace$ is a face of~$\mathcal C$.
\end{enumerate}
\end{lemma}

\begin{proof}
(\ref{lemma_faces1_a})
Let $\nu \in M$ be such that $\langle \rho, \nu \rangle = 0$ for all $\rho \in \mathcal K^1$ and $\langle \rho, \nu \rangle > 0$ for all $\rho \in \mathcal C^1 \setminus \mathcal K^1$; put also $k =\langle \rho_\mu, \nu \rangle > 0$.
Then the element $\nu' = \nu +k\mu$ satisfies $\langle \widetilde{\mathcal K}, \nu' \rangle = 0$ and $\langle \rho, \nu' \rangle > 0$ for all $\rho \in \mathcal C^1 \setminus (\mathcal K^1 \cup \lbrace \rho_\mu \rbrace)$, so $\widetilde{\mathcal K}$ is indeed a face of~$\mathcal C$.

(\ref{lemma_faces1_b})
Let $\nu \in M$ be such that $\langle \rho, \nu \rangle = 0$ for all $\rho \in \widetilde{\mathcal K}^1$ and $\langle \rho, \nu \rangle > 0$ for all $\rho \in \mathcal C^1 \setminus \widetilde{\mathcal K}^1$.
Then, for a sufficiently small $c > 0$, the element $\nu' = \nu - c\mu$ satisfies $\langle \rho, \nu' \rangle = 0$ for all $\rho \in \widetilde{\mathcal K}^1 \setminus \lbrace \rho_\mu \rbrace$ and $\langle \rho, \nu' \rangle > 0$ for all $\rho \in \mathcal C^1 \setminus (\widetilde{\mathcal K}^1 \setminus \lbrace \rho_\mu \rbrace)$, so $\mathcal K$ is a face of~$\mathcal C$.
\end{proof}

A \textit{fan} in $N_\QQ$ is a finite collection $\mathfrak F$ of strictly convex cones in $N_\QQ$ satisfying the following conditions:

\begin{enumerate}[label=\textup{(F\arabic*)},ref=\textup{F\arabic*}]
\item \label{fan1}
if $\mathcal C \in \mathfrak F$, then each face of $\mathcal C$ also belongs to~$\mathfrak F$;

\item \label{fan2}
if $\mathcal C_1, \mathcal C_2 \in \mathfrak F$, then $\mathcal C_1 \cap \mathcal C_2$ is a face of both $\mathcal C_1$ and~$\mathcal C_2$.
\end{enumerate}

Let $\mathfrak F$ be a fan in $N_\QQ$ and
let $\mathfrak F^1$ be the set of primitive elements $\rho$ of the lattice~$N$ such that $\QQ_{\ge0}\rho \in \mathfrak F$.
Note that $\mathfrak F^1 = \bigcup \limits_{\mathcal C \in \mathfrak F} \mathcal C^1$.

\begin{definition} \label{def_DR_fan}
An element $\mu \in M$ is said to be a \textit{Demazure root} of the fan~$\mathfrak F$ if the following conditions are fulfilled:

\begin{enumerate}[label=\textup{(DR\arabic*)},ref=\textup{DR\arabic*}]
\item \label{DR1}
there exists $\rho_\mu \in \mathfrak F^1$ with $\langle \rho_\mu, \mu \rangle = -1$;

\item \label{DR2}
$\langle \rho, \mu \rangle \ge 0$ for all $\rho \in \mathfrak F^1 \setminus \lbrace \rho_\mu \rbrace$;

\item \label{DR3}
if a cone $\mathcal K \in \mathfrak F$ satisfies $\langle \mathcal K, \mu \rangle = 0$, then the cone generated by $\mathcal K$ and $\rho_\mu$ belongs to~$\mathfrak F$.
\end{enumerate}
\end{definition}

Let $\mathfrak R(\mathfrak F)$ denote the set of all Demazure roots of the fan~$\mathfrak F$.
In what follows, for every $\mu \in \mathfrak R(\mathfrak F)$ we fix the notation $\rho_\mu$ for the element in $\mathfrak F^1$ satisfying $\langle \rho_\mu, \mu \rangle = -1$.

An important example of a fan is given by the collection $\mathfrak F$ of faces of a single strictly convex cone~$\mathcal C \subset N_\QQ$.
Clearly, in this situation one has $\mathcal C^1 = \mathfrak F^1$.
By Lemma~\ref{lemma_faces1}(\ref{lemma_faces1_a}), every $\mu \in \mathfrak R(\mathcal C)$ automatically satisfies (\ref{DR3}), and so $\mathfrak R(\mathcal C) = \mathfrak R(\mathfrak F)$.

\begin{lemma} \label{lemma_add_cone}
Let $\mathfrak F$ be a fan in $N_\QQ$ and let $\mu \in M$ satisfy \textup(\ref{DR1}\textup) and \textup(\ref{DR2}\textup).
Suppose a cone $\mathcal K \in \mathfrak F$ satisfies $\langle \mathcal K, \mu \rangle = 0$ and let $\widetilde{\mathcal K}$ be the cone generated by $\mathcal K$ and~$\rho_\mu$.
Then the collection $\mathfrak F \cup \lbrace\text{all faces of} \ \widetilde{\mathcal K} \rbrace$ is a fan in $N_\QQ$.
\end{lemma}

\begin{proof}
Let $\mathcal C \in \mathfrak F$ be an arbitrary cone and let $\mathcal E$ be an arbitrary face of~$\widetilde{\mathcal K}$.
We need to show that $\mathcal C \cap \mathcal E$ is a face of both $\mathcal C$ and~$\mathcal E$.
If $\rho_\mu \notin \mathcal E^1$, then $\mathcal E$ is a face of~$\mathcal K$, and the assertion is clear.
If $\rho_\mu \in \mathcal E^1$, then, by Lemma~\ref{lemma_faces1}(\ref{lemma_faces1_b}), $\mathcal E$ is generated by a face of $\mathcal K$ and~$\rho_\mu$.
In this case, the subsequent argument is the same as for $\mathcal E = \widetilde{\mathcal K}$, so it suffices to consider the case $\mathcal E = \widetilde{\mathcal K}$.
It remains to show that $\mathcal C \cap \widetilde{\mathcal K}$ is a face of both $\mathcal C$ and~$\widetilde{\mathcal K}$.
If $\langle \mathcal C, \mu \rangle \ge 0$, then $\mathcal C \cap \widetilde{\mathcal K} = \mathcal C \cap \mathcal K$, therefore $\mathcal C \cap \widetilde{\mathcal K}$ is a face of both $\mathcal C$ and $\mathcal K$ (and hence of $\widetilde{\mathcal K}$).
Otherwise $\rho_\mu \in \mathcal C^1$.
Consider the cone $\mathcal P = \mathcal C \cap \mathcal K$; it is a face of both $\mathcal C$ and $\mathcal K$.
Let $\widetilde{\mathcal P}$ be the cone generated by $\mathcal P$ and $\rho_\mu$.
We now show that $\mathcal C \cap \widetilde{\mathcal K} = \widetilde{\mathcal P}$.
The inclusion $\supset$ is clear, so we prove the reverse one.
Take any $v \in \mathcal C \cap \widetilde{\mathcal K}$.
Since $v \in \mathcal C$, one has $v = a\rho_\mu + w$ for some $a \in \QQ_{\ge0}$ and $w \in \QQ_{\ge0}(\mathcal C^1 \setminus \lbrace \rho_\mu \rbrace)$.
Similarly, $v \in \widetilde{\mathcal K}$ implies $v = a'\rho_\mu + w'$ for some $a' \in \QQ_{\ge0}$ and $w' \in \mathcal K$.
The relation $\langle a\rho_\mu + w, \mu \rangle = \langle a'\rho_\mu + w',\mu \rangle$ yields $a-a' = \langle w,\mu \rangle \ge 0$, hence $(a-a')\rho_\mu + w = w'$.
Since $(a-a')\rho_\mu + w \in \mathcal C$ and $w' \in \mathcal K$, it follows that $w' \in \mathcal C \cap \mathcal K = \mathcal P$, thus $v = a'\rho_\mu + w' \in \widetilde{\mathcal P}$.
We have shown that $\mathcal C \cap \widetilde{\mathcal K} = \widetilde{\mathcal P}$.
By Lemma~\ref{lemma_faces1}(\ref{lemma_faces1_a}), $\widetilde{\mathcal P}$ is a face of both $\mathcal C$ and $\widetilde{\mathcal K}$.
\end{proof}

\begin{corollary}
Let $\mathfrak F$ be a fan in $N_\QQ$ and let $\mu \in M$ satisfy \textup(\ref{DR1}\textup) and \textup(\ref{DR2}\textup).
Then there is a fan $\widetilde{\mathfrak F}$ in $N_\QQ$ containing $\mathfrak F$ such that $\mu \in \mathfrak R(\widetilde{\mathfrak F})$.
\end{corollary}

\begin{corollary} \label{crl_convex_DR3}
Let $\mathfrak F$ be a fan in~$N_\QQ$ and suppose that the set $\bigcup \limits_{\mathcal C \in \mathfrak F} \mathcal C$ is convex.
Then every $\mu \in M$ satisfying \textup(\ref{DR1}\textup) and~\textup(\ref{DR2}\textup) automatically satisfies~\textup(\ref{DR3}\textup).
\end{corollary}

In this paper, we shall also need the following property.

\begin{lemma} \label{lemma_>0}
Suppose that $\mathfrak F$ is a complete fan in $N_\QQ$ and $\mu_1,\mu_2 \in \mathfrak R(\mathfrak F)$.
If $\langle \rho_{\mu_1}, \mu_2 \rangle > 0$ and $\langle \rho_{\mu_2}, \mu_1 \rangle > 0 $, then $\mu_1 + \mu_2 = 0$.
\end{lemma}

\begin{proof}
The hypothesis implies $\langle \rho, \mu_1 + \mu_2 \rangle \ge 0$ for all $\rho \in \mathfrak F(X)^1$.
Since the fan $\mathfrak F(X)$ is complete, the latter is possible only if $\mu_1 + \mu_2 = 0$.
\end{proof}

\subsection{Toric varieties}
\label{ssec_tor_var}

Basic references for this subsections are~\cite{CLS, Fu, Oda}.

Let $T$ be a torus.
For every $\lambda \in \mathfrak X(T)$, let $f_\lambda$ denote the regular function on $T$ representing the character~$-\lambda$.
Then $f_\lambda$ is $T$-semi\-invariant of weight~$\lambda$, $f_{\lambda_1} \cdot f_{\lambda_2} = f_{\lambda_1+\lambda_2}$ for all $f_{\lambda_1},f_{\lambda_2} \in \mathfrak X(T)$, and there is the decomposition $\KK[T] = \bigoplus \limits_{\lambda \in \mathfrak X(T)} \KK f_\lambda$.

A $T$-variety $Z$ is said to be \textit{toric} if it is irreducible, normal, and has an open $T$-orbit.
Recall from \S\,\ref{ssec_torus_actions_and_gradings} the notions of the weight lattice~$M(Z)$ and weight monoid $\Gamma(Z)$.
Note that $M(Z)$ is identified with $\mathfrak X(T/T_0)$ where $T_0 \subset T$ is the kernel of the action of~$T$ on~$Z$.
In particular, $T$ acts on~$Z$ effectively if and only if $M(Z) = \mathfrak X(T)$.

Fix a sublattice $M \subset \mathfrak X(T)$ and let $M_\QQ, N, N_\QQ$ be as in~\S\,\ref{subsec_Dem_roots}.
In what follows we provide a description of all toric $T$-varieties with weight lattice~$M$.

Affine toric $T$-varieties with weight lattice $M$ are in bijection with strictly convex cones in~$N_\QQ$.
Namely, given such a cone $\mathcal C \subset N_\QQ$, consider the algebra $A_{\mathcal C} = \bigoplus \limits_{\lambda \in M \cap \mathcal C^\vee} \KK f_\lambda \subset \KK[T]$ and put $Z_{\mathcal C} = \Spec A_{\mathcal C}$.
Then $Z_{\mathcal C}$ is an affine toric $T$-variety with $M(Z_{\mathcal C}) = M$ and $\Gamma(Z_{\mathcal C}) = M \cap \mathcal C^\vee$.
Given a face $\mathcal E$ of the cone $\mathcal C$, the inclusion of algebras $A_{\mathcal C} \subset A_{\mathcal E}$ determines a morphism $Z_{\mathcal E} \to Z_{\mathcal C}$, which is a $T$-equivariant open embedding.
Thus $Z_{\mathcal E}$ is naturally identified with a $T$-stable affine open subset of~$Z_{\mathcal C}$.
The faces of $\mathcal C$ are in bijection with the $T$-orbits in~$Z_{\mathcal C}$: the $T$-orbit $O_{\mathcal E}$ corresponding to a face $\mathcal E$ of $\mathcal C$ is the unique closed $T$-orbit in~$Z_{\mathcal E}$.
One has $\dim O_{\mathcal E} = \rk M - \dim \mathcal E$.
Moreover, given two faces $\mathcal E_1, \mathcal E_2$ of~$\mathcal C$, one has $O_{\mathcal E_1} \subset \overline O_{\mathcal E_2}$ if and only if $\mathcal E_2$ is a face of~$\mathcal E_1$.

Arbitrary (not necessarily affine) toric $T$-varieties with weight lattice $M$ are parametrized by fans in $N_\QQ$.
More precisely, given a fan $\mathfrak F$ in~$N_\QQ$, the corresponding toric $T$-variety $Z$ is the union of $T$-stable affine open subsets $Z_{\mathcal C}$ where $\mathcal C$ runs over all cones in~$\mathfrak F$.
Given two cones $\mathcal C_1, \mathcal C_2 \in \mathfrak F$, the corresponding subsets $Z_{\mathcal C_1}, Z_{\mathcal C_2}$ are glued together via their common open subset $Z_{\mathcal C_1 \cap \mathcal C_2}$.
The $T$-orbits in $Z$ are in bijection with~$\mathfrak F$; we denote by $O_{\mathcal C}$ the $T$-orbit corresponding to a cone $\mathcal C \in \mathfrak F$.
Clearly, $\dim O_{\mathcal C} = \rk M - \dim \mathcal C$.

The set $\mathfrak F^1$ is in natural bijection with the $T$-stable prime divisors in~$Z$.
Namely, an element $\rho \in \mathfrak F^1$ corresponds to a $T$-stable prime divisor $D_\rho \in Z$ such that $D_\rho = \overline O_{\mathcal C}$ with $\mathcal C = \QQ_{\ge0}\rho$.
For every $\lambda \in M$, the order of the function $f_\lambda$ along $D_\rho$ equals $\langle \rho, \lambda \rangle$.
In particular, one has $\Gamma(Z) = \lbrace \lambda \in M \mid \langle \rho, \lambda \rangle \ge 0 \ \text{for all} \ \rho \in \mathfrak F^1 \rbrace$.

\begin{remark}
In the definition of a toric $T$-variety it is often additionally required that the action of~$T$ be effective.
This corresponds to $M = \mathfrak X(T)$ in our notation.
\end{remark}

\subsection{Root subgroups on affine toric varieties}

Fix a sublattice $M \subset \mathfrak X(T)$ and let $M_\QQ, N, N_\QQ$ be as in~\S\,\ref{subsec_Dem_roots}.
Let $Z$ be an affine toric $T$-variety with $M(Z) = M$.
As discussed in \S\,\ref{ssec_tor_var}, $Z = Z_{\mathcal C} = \Spec A_{\mathcal C}$ for a strictly convex cone $\mathcal C \subset N_\QQ$.
The weight monoid of $Z$ is $\Gamma = M \cap \mathcal C^\vee$.
Given any $\mu \in \mathfrak R(\mathcal C)$, one defines a $T$-normalized LND $\partial_\mu$ of weight~$\mu$ on~$A_{\mathcal C}$ by the rule
\begin{equation} \label{eqn_T-norm_LND}
\partial_\mu(f_\lambda)=\langle \rho_\mu, \lambda \rangle f_{\lambda + \mu}
\end{equation}
for all $\lambda \in M \cap \mathcal C^\vee$.
This LND corresponds to a $T$-root subgroup on~$Z$, which we denote by $H_\mu$.

For future reference, we mention that, in view of formula~(\ref{eqn_phi_d}), for every $c \in \KK^\times$ the $\GG_a$-action on $\KK[Z]$ corresponding to the LND $c\partial_\mu$, is given by
\begin{equation} \label{eqn_dmu_int}
(s, f_\lambda) \mapsto f_\lambda(1 + scf_\mu)^{\langle \rho_\mu, \lambda \rangle}
\end{equation}
for all $s \in \KK$ and $\lambda \in M \cap \mathcal C^\vee$.

It is known from \cite[Theorem~2.7]{L1} that every nonzero $T$-normalized LND on~$A_{\mathcal C}$ has the form $c\partial_\mu$ for some $\mu \in \mathfrak R(\mathcal C)$ and $c \in \KK^\times$.
For convenience of the reader, below we provide a direct proof of this result.

\begin{theorem} \label{thm_T-root_subgroups_aff}
The following assertions hold.
\begin{enumerate}[label=\textup{(\alph*)},ref=\textup{\alph*}]
\item \label{thm_T-root_subgroups_aff_a}
The map $\mu \mapsto \partial_\mu$ is a bijection between $\mathfrak R(\mathcal C)$ and the nonzero $T$-normalized LNDs on $\KK[Z]$ modulo proportionality.
\item \label{thm_T-root_subgroups_aff_b}
The map $\mu \mapsto H_\mu$ is a bijection between $\mathfrak R(\mathcal C)$ and the $T$-root subgroups on~$Z$.
\end{enumerate}
\end{theorem}

\begin{proof}
Part~(\ref{thm_T-root_subgroups_aff_b}) is a direct consequence of~(\ref{thm_T-root_subgroups_aff_a}) by Proposition~\ref{prop_Ga-actions}.
To prove~(\ref{thm_T-root_subgroups_aff_a}), it suffices to show that every nonzero $T$-normalized LND on~$\KK[Z] = A_{\mathcal C}$ is proportional to $\partial_\mu$ for some $\mu \in \mathfrak R(\mathcal C)$.
Let $\partial$ be a nonzero $T$-normalized LND on $A_{\mathcal C}$ of weight~$\mu$.
Then it naturally extends to a $T$-normalized derivation (still denoted by~$\partial$) of weight~$\mu$ on the algebra $A = \bigoplus \limits_{\lambda \in M} \KK f_\lambda$.
Note that $\partial$ is homogeneous of weight~$\mu$ (see~\S\,\ref{ssec_torus_actions_and_gradings}) but may be no more locally nilpotent on~$A$.
Choose a basis $\lambda_1,\ldots,\lambda_n \in M$; then there are $\xi_1,\ldots,\xi_n \in \KK$ such that $\partial(f_{\lambda_i}) = \xi_i f_{\lambda_i+\mu}$ for all $i=1,\ldots,n$.
Let $\overline \rho \in \Hom_\ZZ(M,\KK)$ be such that $\langle \overline \rho, \lambda_i \rangle = \xi_i$ for all $i = 1,\ldots, n$.
Then a direct computation shows that $\partial(f_\lambda) = \langle \overline \rho, \lambda \rangle f_{\lambda + \mu}$ for all $\lambda \in M$.
Since $\partial \ne 0$, one has $\overline \rho \ne 0$, whence there is $\lambda_0 \in \Gamma$ such that $\langle \overline \rho, \lambda_0 \rangle \ne 0$.
As $\partial$ is locally nilpotent on $A_{\mathcal C}$, there is the minimal $r \in \ZZ_{>0}$ such that $\partial^r(f_{\lambda_0}) \ne 0$ and $\partial^{r+1}(f_{\lambda_0}) = 0$.
Then $\langle \overline \rho, \lambda_0 + r\mu \rangle = 0$ and hence $\langle \overline \rho, \lambda_0 \rangle + r\langle \overline \rho, \mu \rangle = 0$.
Since $\langle \overline \rho, \lambda_0 \rangle \ne 0$ and $r > 0$, it follows that $\langle \overline \rho, \mu \rangle \ne 0$.
Put $c = - \langle \overline \rho, \mu \rangle$ and $\rho_\mu = \overline \rho/c$.
Then for every $\lambda \in M$ we obtain $\partial(f_\lambda) = c \langle \rho_\mu, \lambda \rangle f_{\lambda + \mu}$ and $\langle \rho_\mu, \mu\rangle = \langle \overline \rho, \mu \rangle / c = -1$.
Again, since $\partial$ is locally nilpotent on~$A_{\mathcal C}$, for every $\lambda \in \Gamma$ there is the minimal $r \in \ZZ_{\ge0}$ such that $\partial^r(f_{\lambda}) \ne 0$ and $\partial^{r+1}(f_{\lambda}) = 0$.
Then $\langle \rho_\mu, \lambda + r\mu \rangle = 0$ and hence $\langle \rho_\mu, \lambda \rangle = -r\langle \rho_\mu, \mu \rangle = r \ge 0$.
We conclude that $\rho_\mu \in \mathcal C$ and $\rho_\mu \in N$.
Moreover, $\rho_\mu$ is primitive in~$N$ as $\langle \rho_\mu, \mu \rangle = -1$.
Now take any $\rho \in \mathcal C^1$ and assume that $\rho \ne \rho_\mu$.
Then $\rho, \rho_\mu$ are not proportional, hence there is $\lambda_0 \in \Gamma$ with $\langle \rho, \lambda_0 \rangle = 0$ and $\langle \rho_\mu, \lambda_0 \rangle = r > 0$.
Then $\partial^r(f_{\lambda_0}) \ne 0$, hence $f_{\lambda_0 + r\mu} \in A_{\mathcal C}$, hence $\lambda_0 + r\mu \in \Gamma$, hence $\langle \rho, \lambda_0 + r\mu \rangle \ge 0$, hence $\langle \rho, \mu \rangle \ge 0$.
If $\rho_\mu \notin \mathcal C^1$, then $\rho_\mu = \sum \limits_{\rho \in \mathcal C^1} a_\rho \rho$ with $a_\rho \ge 0$ for all $\rho \in \mathcal C^1$, which implies $\langle \rho_\mu, \mu \rangle \ge 0$, a contradiction.
Thus $\rho_\mu \in \mathcal C^1$ and $\langle \rho, \mu \rangle \ge 0$ for all $\rho \in \mathcal C^1 \setminus \lbrace \rho_\mu \rbrace$.
We have proved that $\mu \in \mathfrak R(\mathcal C)$ and $\partial = c \partial_\mu$ as required.
\end{proof}

In the remaining part of this subsection, we study how a $T$-root subgroup on $Z$ acts on $T$-orbits.
To this end, we consider a more general situation, which will be also needed later in~\S\,\ref{ssec_stand_aff}.

Suppose $Z$ is an arbitrary irreducible affine $T$-variety (not necessarily toric) with weight lattice $M$ and weight monoid~$\Gamma$.
Suppose in addition that $\Gamma = M \cap \mathcal C^\vee$ for a strictly convex cone $\mathcal C$ in~$N_\QQ$.
For every face $\mathcal E$ of~$\mathcal C$, put $\Gamma_{\mathcal E} = \lbrace \lambda \in \Gamma \mid \langle v, \lambda \rangle = 0 \ \text{for all} \ v \in \mathcal E \rbrace$, so that $\QQ_{\ge0} \Gamma_{\mathcal E}$ is the face of $\mathcal C^\vee$ dual to~$\mathcal E$.
Consider the grading $\KK[Z] = \bigoplus \limits_{\lambda \in \Gamma} \KK[Z]_\lambda$ induced by the $T$-action.
For every face $\mathcal E$ of~$\mathcal C$, consider the ideal $I_{\mathcal E} = \bigoplus \limits_{\lambda \in \Gamma \setminus \Gamma_{\mathcal E}} \KK[Z]_\lambda$ in $\KK[Z]$ and let $Y_{\mathcal E}$ be the respective closed subvariety in~$Z$.
Fix any $\mu \in \mathfrak R(\mathcal C)$ and let $\rho_\mu \in \mathcal C^1$ be the corresponding element.
Let $\partial$ be a nonzero homogeneous LND on $\KK[Z]$ of weight~$\mu$ and assume that $\Ker \partial = \bigoplus \limits_{\lambda \in \Gamma_{\mu}} \KK[Z]_{\lambda}$ where $\Gamma_\mu = \Gamma_{\QQ_{\ge0}\rho_\mu}$ for short.
Let $H$ denote the $T$-root subgroup on~$Z$ corresponding to~$\partial$.

\begin{proposition} \label{prop_stable_ideals}
Given a face $\mathcal E$ of~$\mathcal C$, the following assertions hold.
\begin{enumerate}[label=\textup{(\alph*)},ref=\textup{\alph*}]
\item \label{prop_stable_ideals_a}
If there is $\rho \in \mathcal E^1$ such that $\langle \rho, \mu \rangle > 0$, then $Y_{\mathcal E}$ is pointwise fixed by~$H$.
\item \label{prop_stable_ideals_b}
If $\langle \mathcal E, \mu \rangle = 0$, then $Y_{\mathcal E}$ is $H$-stable with nontrivial $T$-action.
\item \label{prop_stable_ideals_c}
If $\langle \mathcal E, \mu \rangle \le 0$ and $\rho_\mu \in \mathcal E^1$, then $Y_{\mathcal E}$ is $H$-unstable.
\end{enumerate}
\end{proposition}

\begin{proof}
(\ref{prop_stable_ideals_a})
In this case, $\partial(\KK[Z]) \subset I_{\mathcal E}$, hence the induced action of $\partial$ on $\KK[Z]/I_{\mathcal E} \simeq \KK[Y_{\mathcal E}]$ is trivial, and so $Y_{\mathcal E}$ is pointwise fixed by~$H$.

(\ref{prop_stable_ideals_b})
As $\langle \mathcal E, \mu \rangle = 0$, one has $\partial(I_{\mathcal E}) \subset I_{\mathcal E}$, hence $Y_{\mathcal E}$ is $H$-stable.
Further, observe that $\partial$ preserves the subalgebra $\bigoplus \limits_{\lambda \in \Gamma_{\mathcal E}} \KK[Z]_\lambda$ and is nontrivial on it.
Since this subalgebra maps isomorphically to $\KK[Z]/I_{\mathcal E} \simeq \KK[Y_{\mathcal E}]$, it follows that $H$ acts nontrivially on~$Y_{\mathcal E}$.

(\ref{prop_stable_ideals_c})
Choose an element $\nu \in \Gamma_{\mathcal E}$ such that $\langle \rho, \nu \rangle > 0$ for all $\rho \in \mathcal C^1 \setminus \mathcal E^1$.
Then there is $N \in \ZZ_{>0}$ such that the element $\lambda = N\nu - \mu$ belongs to~$\Gamma$.
Clearly, $\lambda \notin \Gamma_{\mathcal E}$, $\lambda + \mu \in \Gamma_{\mathcal E}$, and $\KK[Z]_\lambda \cap \Ker \partial = \lbrace 0 \rbrace$, which implies $\partial(I_{\mathcal E}) \not\subset I_{\mathcal E}$, and so $Z(\mathcal E)$ is not $H$-stable.
\end{proof}

For every face $\mathcal E$ of~$\mathcal C$, let $U_{\mathcal E}$ denote the open subset of $Y_{\mathcal E}$ obtained by removing all subvarieties $Y_{\mathcal E'}$ where $\mathcal E'$ runs over all faces of $\mathcal C$ strictly containing~$\mathcal E$.

\begin{proposition} \label{prop_stable_subvar}
Given a face $\mathcal E$ of~$\mathcal C$, the following assertions hold.
\begin{enumerate}[label=\textup{(\alph*)},ref=\textup{\alph*}]
\item \label{prop_stable_subvar_a}
If there is $\rho \in \mathcal E^1$ such that $\langle \rho, \mu \rangle > 0$, then $U_{\mathcal K}$ is pointwise fixed by~$H$.

\item \label{prop_stable_subvar_b}
If $\langle \mathcal E, \mu \rangle \le 0$, then there exist faces $\mathcal K, \widetilde{\mathcal K}$ of $\mathcal C$ such that $\langle \mathcal K, \mu \rangle = 0$, $\widetilde{\mathcal K}$ is generated by $\mathcal K$ and~$\rho_\mu$, $\mathcal E \in \lbrace \mathcal K, \widetilde{\mathcal K}\rbrace$, $HU_{\mathcal E} \subset U_{\mathcal K} \cup U_{\widetilde{\mathcal K}}$, and $HU_{\mathcal E}$ meets both $U_{\mathcal K}$ and $U_{\widetilde{\mathcal K}}$.
\end{enumerate}
\end{proposition}

\begin{proof}
(\ref{prop_stable_subvar_a})
This is a direct consequence of Proposition~\ref{prop_stable_ideals}(\ref{prop_stable_ideals_a}).

(\ref{prop_stable_subvar_b})
If $\langle \mathcal E, \mu \rangle = 0$, then we put $\mathcal K = \mathcal E$; by Lemma~\ref{lemma_faces1}(\ref{lemma_faces1_a}), the cone $\widetilde{\mathcal K}$ generated by $\mathcal K$ and~$\rho_\mu$ is a face of~$\mathcal C$.
If $\rho_\mu \in \mathcal E^1$, then we put $\widetilde{\mathcal K} = \mathcal E$; by Lemma~\ref{lemma_faces1}(\ref{lemma_faces1_b}), the cone $\mathcal K = \lbrace v \in \widetilde{\mathcal K} \mid \langle v, \mu \rangle = 0 \rbrace$ is a face of~$\mathcal C$.
By Proposition~\ref{prop_stable_ideals}(\ref{prop_stable_ideals_b},\ref{prop_stable_ideals_c}), the subset $Y_{\mathcal K}$ is $H$-stable and $U_{\widetilde{\mathcal K}}$ is not, therefore it suffices to show that the subset $U_{\mathcal K} \cup U_{\widetilde{\mathcal K}} \subset Y_{\mathcal K}$ is $H$-stable.
Let $\mathcal E'\ne \widetilde{\mathcal K}$ be a face of $\mathcal C$ strictly containing $\mathcal K$; then we need to show that
\begin{equation} \label{eqn_HY}
HY_{\mathcal E'} \cap  (U_{\mathcal K} \cup U_{\widetilde{\mathcal K}}) = \varnothing.
\end{equation}
If there is $\rho \in \mathcal E'^1$ with $\langle \rho, \mu \rangle > 0$ or $\langle \mathcal E', \mu \rangle = 0$, then $Y_{\mathcal E'}$ is $H$-stable by Proposition~\ref{prop_stable_ideals}(\ref{prop_stable_ideals_a},\ref{prop_stable_ideals_b}), and so~(\ref{eqn_HY}) holds.
If $\langle \mathcal E', \mu \rangle \le 0$ and $\rho_\mu \in \mathcal E'^1$, then $\mathcal E'' = \lbrace v \in \mathcal E' \mid \langle v, \mu \rangle = 0 \rbrace$ is also a face of $\mathcal C$ by Lemma~\ref{lemma_faces1}(\ref{lemma_faces1_a}); moreover, $\mathcal E''$ strictly contains~$\mathcal K$ and $\mathcal E'' \ne \widetilde{\mathcal K}$.
By Proposition~\ref{prop_stable_ideals}(\ref{prop_stable_ideals_b}), $Y_{\mathcal E''}$ is $H$-stable.
As $Y_{\mathcal E'} \subset Y_{\mathcal E''}$, (\ref{eqn_HY}) holds in this case as well.
\end{proof}

We now come back to the situation where $Z$ is an affine toric $T$-variety with weight lattice~$M$ and weight monoid~$\Gamma$.
Then for every face $\mathcal E$ of~$\mathcal C$ one has $U_{\mathcal E} = O_{\mathcal E}$ and $Y_{\mathcal E} = \overline O_{\mathcal E}$.
Fix $\mu \in \mathfrak R(\mathcal C)$ and let $H = H_\mu$ be the corresponding $T$-root subgroup on~$Z$.
Propositions~\ref{prop_stable_oc} and~\ref{prop_stable_orbits} below follow from Propositions~\ref{prop_stable_ideals} and~\ref{prop_stable_subvar}, respectively.

\begin{proposition} \label{prop_stable_oc}
Given a face $\mathcal E$ of~$\mathcal C$, the following assertions hold.
\begin{enumerate}[label=\textup{(\alph*)},ref=\textup{\alph*}]
\item \label{prop_stable_oc_a}
If there is $\rho \in \mathcal E^1$ such that $\langle \rho, \mu \rangle > 0$, then $\overline O_{\mathcal E}$ is pointwise fixed by~$H$.
\item \label{prop_stable_oc_b}
If $\langle \mathcal E, \mu \rangle = 0$, then $\overline O_{\mathcal E}$ is $H$-stable with nontrivial $T$-action.
\item \label{prop_stable_oc_c}
If $\langle \mathcal E, \mu \rangle \le 0$ and $\rho_\mu \in \mathcal E^1$, then $\overline O_{\mathcal E}$ is $H$-unstable.
\end{enumerate}
\end{proposition}

Recall from \S\,\ref{ssec_tor_var} that every $\rho \in \mathcal C^1$ corresponds to a $T$-stable prime divisor $D_\rho$ in~$Z$.

\begin{corollary} \label{crl_T-orbits_H}
Given $\rho \in \mathcal C^1$, the divisor $D_\rho$ is $H$-stable if and only if $\langle \rho,\mu \rangle \ge 0$.
Moreover, $D_\rho$ is pointwise fixed by~$H$ if and only if $\langle \rho, \mu \rangle > 0$.
\end{corollary}

\begin{proposition} \label{prop_stable_orbits}
Given a face $\mathcal E$ of~$\mathcal C$, the following assertions hold.
\begin{enumerate}[label=\textup{(\alph*)},ref=\textup{\alph*}]
\item \label{prop_stable_orbits_a}
If there is $\rho \in \mathcal E^1$ such that $\langle \rho, \mu \rangle > 0$, then $O_{\mathcal E}$ is pointwise fixed by~$H$.

\item \label{prop_stable_orbits_b}
If $\langle \mathcal E, \mu \rangle \le 0$, then there exist faces $\mathcal K, \widetilde{\mathcal K}$ of $\mathcal C$ such that $\langle \mathcal K, \mu \rangle = 0$, $\widetilde{\mathcal K}$ is generated by $\mathcal K$ and~$\rho_\mu$, $\mathcal E \in \lbrace \mathcal K, \widetilde{\mathcal K}\rbrace$, and $HO_{\mathcal E} = O_{\mathcal K} \cup O_{\widetilde{\mathcal K}}$.
\end{enumerate}
\end{proposition}

\subsection{Root subgroups on arbitrary toric varieties}

In this subsection, we provide a complete self-contained description of all $T$-root subgroups on an arbitrary toric $T$-variety, which goes back to Demazure \cite{Dem}; see also \cite{Oda}.
The main result of this subsection is Theorem~\ref{thm_T-root_subgroups}.

Let $Z$ be a toric $T$-variety.
Let $M$ be the weight lattice of~$Z$, put $N = \Hom_\ZZ(M,\ZZ)$, and retain the notation of~\S\,\ref{subsec_Dem_roots}.
Let $\mathfrak F$ be the fan in $N_\QQ$ corresponding to~$Z$ as in \S\,\ref{ssec_tor_var}.

\begin{proposition} \label{prop_HorbT}
Let $H$ be a $T$-root subgroup on~$Z$ and let $Y \subset Z$ be an $H$-orbit.
Suppose $Y$ is not a point.
Then $Y$ meets exactly two $T$-orbits $O_1,O_2 \subset Z$, which satisfy $\dim O_1 = \dim O_2 + 1$.
Moreover, $Y \cap O_2$ is a single point and $HO_1 = HO_2 = O_1 \cup O_2$.
\end{proposition}

\begin{proof}
Since $Y$ is not a point, one has $Y \simeq \AA^1$.
Put $T_0 = \lbrace t \in T \mid tY \subset Y \rbrace$.
We claim that
\begin{equation} \label{eqn_TyY}
Ty \cap Y = T_0y \quad \text{for all} \quad y \in Y.
\end{equation}
The inclusion $\supset$ is clear.
Conversely, take any $y' \in Ty \cap Y$; then $y' = ty$ for some $t \in T$.
It remains to show that $t \in T_0$.
For every $z \in Y$ one has $z = hy$ for some $h \in H$, and so $tz = thy = tht^{-1}ty = (tht^{-1})y' \in Hy' = Y$.
Thus $t \in T_0$ and (\ref{eqn_TyY}) is proved.

Consider the homomorphism $\varphi \colon T_0 \to \Aut Y \simeq \Aut \AA^1$.
Since all automorphisms of $\AA^1$ are well-known to have the form $x \mapsto ax+b$ for some $a,b \in \KK$, $a \ne 0$, it follows that $\Im\varphi$ is a diagonalizable subgroup in $\Aut Y$.
By~(\ref{eqn_TyY}), there are only finitely many $T_0$-orbits in~$Y$, which implies $\Im \varphi \simeq \KK^\times$, whence $Y$ splits into two $T_0$-orbits with one of them being a $T_0$-fixed point~$y_0$.
Thanks to~(\ref{eqn_TyY}), $Y$ meets two $T$-orbits $O_1 = T(Y \setminus \lbrace y_0 \rbrace)$ and $O_2 = Ty_0$.
For every $y \in Y$, $t \in T_y$, and $h \in H$ one has $t(hy) = (tht^{-1})ty = (tht^{-1})y \subset Hy = Y$, hence $T_y \subset T_0$ and $T_y = (T_0)_y$.
It follows that $\dim O_1 - \dim O_2 = \dim (Y \setminus \lbrace y_0 \rbrace) - \dim \lbrace y_0 \rbrace = 1$.
Clearly, $tY$ is an $H$-orbit for every $t \in T$, which completes the proof.
\end{proof}

\begin{proposition} \label{prop_TRS_prop}
Let $H$ be a $T$-root subgroup on~$Z$ of weight~$\mu$.
The following assertions hold.
\begin{enumerate}[label=\textup{(\alph*)},ref=\textup{\alph*}]
\item \label{prop_TRS_prop_a}
$\mu \in M$.
\item \label{prop_TRS_prop_b}
There is $\rho_\mu \in \mathfrak F^1$ such that $\langle \rho_\mu, \mu \rangle = -1$.
Moreover, given $\rho \in \mathfrak F^1$, the divisor $D_{\rho}$ is $H$-stable if and only if $\rho \ne \rho_\mu$.
\item \label{prop_TRS_prop_c}
Fix an isomorphism $\GG_a \xrightarrow{\sim} H$, $s \mapsto H(s)$.
Then there is a constant $c \in \KK^\times$ such that
\begin{equation} \label{fla_action}
H(s)\cdot f_\lambda = f_\lambda(1 + csf_\mu)^{\langle \rho_\mu, \lambda \rangle}
\end{equation}
for all $s \in \KK$ and $\lambda \in M$.
\item \label{prop_TRS_prop_d}
Every $\rho \in \mathfrak F^1 \setminus \lbrace \rho_\mu \rbrace$ satisfies $\langle \rho, \mu \rangle \ge 0$.
\item \label{prop_TRS_prop_e}
Suppose a cone $\mathcal K \in \mathfrak F$ is such that $\langle \mathcal K, \mu \rangle = 0$.
Then the cone generated by $\mathcal K$ and $\rho_\mu$ also belongs to~$\mathfrak F$.
\end{enumerate}
\end{proposition}

\begin{proof}
(\ref{prop_TRS_prop_a})
Clearly, $\mu$ vanishes on all elements of the kernel of the action of $T$ on~$Z$, hence $\mu \in M$.

(\ref{prop_TRS_prop_b})
Let $O$ be the open $T$-orbit in~$Z$.
Since $H$ acts nontrivially on~$Z$, by Proposition~\ref{prop_HorbT} there is a $T$-orbit $O_H \subset Z$ of codimension~$1$ such that $HO = O \cup O_H$.
Put $Z_H = HO$; this is an $H$-stable affine open subset of~$Z$.
Let $\rho_\mu \in \mathfrak F^1$ be the element corresponding to~$O_H$ (and its closure in~$Z$).
Then $Z_H$, regarded as an affine toric $T$-variety, corresponds to the cone $\QQ_{\ge0} \rho_\mu$.
By Theorem~\ref{thm_T-root_subgroups_aff}(\ref{thm_T-root_subgroups_aff_b}), $\mu$ is a Demazure root of this cone, and so $\langle \rho_\mu, \mu \rangle = -1$.
Since the subset $Z_H \subset Z$ is $H$-stable, all $T$-stable prime divisors in~$Z$ except~$D_{\rho_\mu}$ are $H$-stable, whence the second claim.

(\ref{prop_TRS_prop_c})
One has $\KK[Z_H] = \bigoplus\limits_{\lambda \in M : \langle \rho_{\mu}, \lambda \rangle \ge 0} \KK f_{\lambda}$.
Thanks to Theorem~\ref{thm_T-root_subgroups_aff} and formula~(\ref{eqn_dmu_int}), there is a constant $c \in \KK^\times$ such that the action of $H$ on $\KK[Z_H]$ is given by~(\ref{fla_action}) for all $s \in \KK$ and $\lambda \in M$ with $\langle \rho_\mu, \lambda \rangle \ge 0$.
Observe that the same formula remains valid for all $\lambda \in M$.

(\ref{prop_TRS_prop_d})
If $\rho = - \rho_\mu$, then the assertion is obvious.
In what follows we assume that $\rho$ and $\rho_\mu$ are not proportional.
Then there exists $\nu \in M$ such that $\langle \rho, \nu \rangle = 0$ and $\langle \rho_\mu, \nu \rangle = k > 0$.
Clearly, $\mathrm{ord}_{D_\rho} (f_\nu) = \langle \rho, \nu \rangle = 0$.
Thanks to~(\ref{prop_TRS_prop_b}), the divisor $D_\rho$ is $H$-stable, and so for all $s \in \KK$ one has $\mathrm{ord}_{D_\rho} (H(s)\cdot f_\nu) = 0$.
Now assume that $\langle \rho, \mu \rangle < 0$.
Then, by formula~(\ref{fla_action}), for all $s \ne 0$ one has $\mathrm{ord}_{D_\rho} (H(s)\cdot f_\nu) = k \cdot \mathrm{ord}_{D_\rho} (1+csf_\mu) = k\langle \rho,\mu\rangle < 0$, a contradiction.

(\ref{prop_TRS_prop_e})
Recall from~(\ref{prop_TRS_prop_b}) that all divisors $D_\rho$ with $\rho \in \mathfrak F^1 \setminus \lbrace \rho_\mu \rbrace$ are $H$-stable.
Removing from~$Z$ a suitable collection of them, we may assume that $\mathfrak F^1 = \mathcal K^1 \cup \lbrace \rho_\mu \rbrace$.
Consider the cone $\widetilde{\mathcal K} = \QQ_{\ge0} (\mathcal K^1 \cup \lbrace \rho_\mu \rbrace)$ and let $\widetilde{\mathfrak F}$ be the fan consisting of all faces of~$\widetilde{\mathcal K}$.
Lemma~\ref{lemma_add_cone} yields $\mathfrak F \subset \widetilde{\mathfrak F}$.
Let $\widetilde Z$ be the affine toric $T$-variety corresponding to~$\widetilde{\mathcal K}$ (and~$\widetilde{\mathfrak F})$.
Then there is a natural $T$-equivariant embedding $Z \hookrightarrow \widetilde Z$.
Since $\mu$ is a Demazure root of the cone $\widetilde{\mathcal K}$, the action of $H$ on $Z$ naturally extends to~$\widetilde Z$.
By Proposition~\ref{prop_stable_orbits}(\ref{prop_stable_orbits_b}), the $T$-orbits $O_{\mathcal K}$ and $O_{\widetilde{\mathcal K}}$ in $\widetilde Z$ are connected by~$H$, hence $O_{\widetilde{\mathcal K}} \subset Z$ and $\widetilde{\mathcal K} \in \mathfrak F$.
\end{proof}

\begin{corollary} \label{cor_weight_is_DR}
Let $H$ be a $T$-root subgroup on~$Z$ of weight~$\mu$.
Then $\mu \in \mathfrak R(\mathfrak F)$.
\end{corollary}

\begin{proposition} \label{prop_ex_of_RS}
Suppose $\mu \in \mathfrak R(\mathfrak F)$.
Then there exists a unique $T$-root subgroup of weight~$\mu$ on~$Z$.
\end{proposition}

\begin{proof}
For the cone $\mathcal C_\mu = \QQ_{\ge0}\rho_\mu \in \mathfrak F$, consider the $\GG_a$-action on $Z_{\mathcal C_\mu}$ corresponding to the LND $\partial_\mu$ on $\KK[Z_{\mathcal C_\mu}]$ given by formula~(\ref{eqn_T-norm_LND}).
We shall show that this $\GG_a$-action extends to the whole~$Z$, which would yield the desired $T$-root subgroup on~$Z$.

By~(\ref{eqn_dmu_int}), the corresponding $\GG_a$-action on~$\KK[Z_{\mathcal C_\mu}]$ is given by the formula
\begin{equation} \label{fla_action1}
(s, f_\lambda) \mapsto f_\lambda(1 + sf_\mu)^{\langle \rho_\mu, \lambda \rangle}
\end{equation}
for all $s \in \KK$ and $\lambda \in M$ with $\langle \rho_\mu, \lambda \rangle \ge 0$.
Observe that the corresponding algebra homomorphism $\KK[Z_{\mathcal C_\mu}] \to \KK[\AA^1 \times Z_{\mathcal C_\mu}]$ is given by
\begin{equation} \label{eqn_alg_hom}
f_\lambda \mapsto  f_\lambda(1 - \epsilon f_\mu)^{\langle \rho_\mu, \lambda \rangle}
\end{equation}
for all $\lambda \in M$ with $\langle \rho_\mu, \lambda \rangle \ge 0$, where $\epsilon$ is the coordinate function on~$\AA^1$, $\epsilon(s) = s$ for all $s \in \KK$.

Take an arbitrary cone $\mathcal C \in \mathfrak F$; it remains to prove that the above $\GG_a$-action on $Z_{\mathcal C_\mu}$ extends to a morphism $\AA^1 \times Z_{\mathcal C} \to Z$.

Case 1: $\rho_\mu \in \mathcal C^1$.
Then $\mu$ is a Demazure root of the cone~$\mathcal C$, therefore the LND $\partial_\mu$ preserves the subalgebra $\KK[Z_{\mathcal C}] \subset \KK[Z_{\mathcal C_\mu}]$, and so the $\GG_a$-action on $Z_{\mathcal C_\mu}$ extends to~$Z_{\mathcal C}$.

Case 2: $\rho_\mu \notin \mathcal C^1$.
Then $\langle \rho, \mu \rangle \geqslant 0$ for all $\rho \in \mathcal C^1$.
Let $\mathcal K$ be the cone generated by the set $\lbrace \rho \in \mathcal C^1 \mid \langle \rho, \mu \rangle = 0 \rbrace$.
Then $\mathcal K$ is a face of~$\mathcal C$.
Let $\widetilde{\mathcal K}$ be the cone generated by~$\mathcal K$ and~$\rho_\mu$.
By~(\ref{DR3}), one has $\widetilde{\mathcal K} \in \mathfrak F$.
Consider the regular functions $g = 1 - \epsilon f_\mu$ and $h = f_\mu$ on $\AA^1 \times Z_{\mathcal C}$ and let $(\AA^1 \times Z_{\mathcal C})_g$ and $(\AA^1 \times Z_{\mathcal C})_h$ be the principal open subsets defined by the nonvanishing of~$g$ and~$h$, respectively.
Then formula~(\ref{eqn_alg_hom}) defines algebra homomorphisms
\[
\KK[Z_{\mathcal C}] \to \KK[(\AA^1 \times Z_{\mathcal C})_g] = \KK[\AA^1 \times Z_{\mathcal C}][g^{-1}]
\]
and
\[
\KK[Z_{\widetilde{\mathcal K}}] \to \KK[(\AA^1 \times Z_{\mathcal C})_h] = \KK[\AA^1 \times Z_{\mathcal C}][h^{-1}]
\]
(in the second case, $f_\lambda \cdot f_\mu^N$ is regular on $Z_{\mathcal C}$ for a sufficiently large power~$N$), which in turn define morphisms $(\AA^1 \times Z_{\mathcal C})_g \to Z_{\mathcal C}$ and $(\AA^1 \times Z_{\mathcal C})_h \to Z_{\widetilde{\mathcal K}}$.
Since $g + \epsilon h = 1$, one has $(\AA^1 \times Z_{\mathcal C})_g \cup (\AA^1 \times Z_{\mathcal C})_h  = \AA^1 \times Z_{\mathcal C}$, and thus the two morphisms in fact glue together to a morphism $\AA^1 \times Z_{\mathcal C} \to Z_{\mathcal C} \cup Z_{\widetilde{\mathcal K}}$, which extends the $\GG_a$-action on $Z_{\mathcal C_\mu}$.

Let $H$ denote the $T$-root subgroup of weight~$\mu$ on~$Z$ constructed above.
As follows from the proof of Proposition~\ref{prop_TRS_prop}(\ref{prop_TRS_prop_b}), any $T$-root subgroup $H'$ of weight~$\mu$ on~$Z$ preserves the open subset~$Z_{\mathcal C_\mu}$.
By Theorem~\ref{thm_T-root_subgroups_aff}(\ref{thm_T-root_subgroups_aff_b}), $H'$ and~$H$ coincide on~$Z_{\mathcal C_\mu}$, hence they coincide on the whole~$Z$.
\end{proof}

For every $T$-root subgroup $H$ on~$Z$, let $\mu(H)$ denote its weight.
The next result follows from Corollary~\ref{cor_weight_is_DR} and Proposition~\ref{prop_ex_of_RS}.

\begin{theorem} \label{thm_T-root_subgroups}
The map $H \mapsto \mu(H)$ is a bijection between the $T$-root subgroups on~$Z$ and the set $\mathfrak R(\mathfrak F)$.
\end{theorem}

\begin{proposition} \label{prop_stable_orbits2}
Given a cone $\mathcal E \in \mathfrak F$, the following assertions hold.
\begin{enumerate}[label=\textup{(\alph*)},ref=\textup{\alph*}]
\item \label{prop_stable_orbits2_a}
If there is $\rho \in \mathcal E^1$ such that $\langle \rho, \mu \rangle > 0$, then $O_{\mathcal E}$ is pointwise fixed by~$H$.

\item \label{prop_stable_orbits2_b}
If $\langle \mathcal E, \mu \rangle \le 0$, then there exist cones $\mathcal K, \widetilde{\mathcal K} \in \mathfrak F$ such that $\langle \mathcal K, \mu \rangle = 0$, $\widetilde{\mathcal K}$ is generated by $\mathcal K$ and~$\rho_\mu$, $\mathcal E \in \lbrace \mathcal K, \widetilde{\mathcal K}\rbrace$, and $HO_{\mathcal E} = O_{\mathcal K} \cup O_{\widetilde{\mathcal K}}$.
\end{enumerate}
\end{proposition}

\begin{proof}
(\ref{prop_stable_orbits2_a})
It suffices to prove the claim in the case $\mathcal E = \QQ_{\ge0}\rho$.
Thanks to Proposition~\ref{prop_TRS_prop}(\ref{prop_TRS_prop_b}), removing from~$Z$ a suitable collection of $T$-stable prime divisors we may assume that $\mathfrak F^1 = \lbrace \rho_\mu, \rho \rbrace$.
Consider the cone $\mathcal K = \QQ_{\ge0}\lbrace \rho_\mu, \rho \rbrace$ and let $\widetilde Z$ be the affine toric $T$-variety corresponding to~$\mathcal K$.
Then $Z \subset \widetilde Z$ and $\mu \in \mathfrak R(\mathcal K)$, so $H$ extends to a $T$-root subgroup on~$\widetilde Z$.
Now the claim follows from Proposition~\ref{prop_stable_orbits}(\ref{prop_stable_orbits_a}).

(\ref{prop_stable_orbits2_b})
Again, thanks to Proposition~\ref{prop_TRS_prop}(\ref{prop_TRS_prop_b}), removing from~$Z$ a suitable collection of $T$-stable prime divisors we may assume that $\mathfrak F^1 = \mathcal E^1 \cup \lbrace \rho_\mu \rbrace$.
Let $\widetilde{\mathcal K}$ be the cone generated by $\mathcal E^1$ and~$\rho_\mu$.
Then either $\mathcal E = \widetilde{\mathcal K}$ or $\langle \mathcal E, \mu \rangle = 0$.
In the latter case, $\widetilde{\mathcal K} \in \mathfrak F$ by Proposition~\ref{prop_TRS_prop}(\ref{prop_TRS_prop_e}).
We have obtained that $\widetilde{\mathcal K} \in \mathfrak F$, hence $Z = Z_{\widetilde{\mathcal K}}$ is affine and the claim follows from Proposition~\ref{prop_stable_orbits}(\ref{prop_stable_orbits_b}).
\end{proof}

\section{Generalities on spherical varieties}
\label{sec_gen_spher}

\subsection{Notation for reductive groups}

In what follows, $G$ will denote a connected reductive algebraic group.
Fix a Borel subgroup $B \subset G$ and a maximal torus $T \subset B$.
There is a unique Borel subgroup $B^-$ of $G$ such that $B \cap B^- = T$; it is said to be opposite to~$B$.
Let $U$ (resp.~$U^-$) denote the unipotent radical of $B$ (resp.~$B^-$); both $U$ and $U^-$ are maximal unipotent subgroups of~$G$.

We identify the groups $\mathfrak X(T)$ and $\mathfrak X(B)$ via restricting characters from $B$ to~$T$.
Similarly, $\mathfrak X(G)$ will be regarded as a subgroup of~$\mathfrak X(T)$.

Let $\Delta \subseteq \mathfrak X(T)$ be the root system of~$G$ with respect to $T$ and let $\Pi \subseteq \Delta$ be the set of simple roots with respect to~$B$.
For every $\alpha \in \Delta$, we let $\alpha^\vee \in \Hom_\ZZ(\mathfrak X(T), \ZZ)$ be the corresponding dual root and let  $U_\alpha \subseteq G$ be the corresponding one-parameter unipotent subgroup.

Let $\Lambda^+ \subseteq \mathfrak X(T)$ be the monoid of dominant weights with respect to~$B$.
Recall that $\Lambda^+$ is in bijection with the (isomorphism classes of) simple finite-dimensional $G$-modules.
Under this bijection, every $\lambda \in \Lambda^+$ corresponds to the simple $G$-module with highest weight~$\lambda$.

\subsection{Spherical varieties and related notions}

Recall that a $G$-variety is said to be \textit{spherical} if it is normal, irreducible, and possesses an open $B$-orbit.

\begin{theorem}[{\cite[Theorem~2]{VK}}] \label{thm_VK}
Let $X$ be a normal irreducible $G$-variety.
The following assertions hold.
\begin{enumerate}[label=\textup{(\alph*)},ref=\textup{\alph*}]
\item
If $X$ is spherical then the $G$-module $\KK[X]$ is multiplicity free.

\item
If the $G$-module $\KK[X]$ is multiplicity free and $X$ is quasi-affine then $X$ is spherical.
\end{enumerate}
\end{theorem}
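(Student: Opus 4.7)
The plan is to handle the two parts separately, using the standard correspondence, for any rational $G$-module $W$, between the multiplicity of the simple $G$-module $V(\lambda)$ and the dimension $\dim W^{(B)}_{\lambda}$ of the $\lambda$-weight space of $B$-semi-invariants, together with Rosenlicht's theorem on $B$-invariant rational functions.

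For part~(a), suppose $X$ is spherical with open $B$-orbit $O \subset X$. I would show that $\dim \KK[X]^{(B)}_\lambda \le 1$ for every $\lambda \in \mathfrak X(B)$. Indeed, given two non-zero $\phi_1, \phi_2 \in \KK[X]^{(B)}_\lambda$, $B$-semi-invariance forces each $\phi_i$ to be either identically zero on~$O$ or non-vanishing on~$O$ (since its zero locus is $B$-stable), and the non-vanishing case must hold because $O$ is dense. Fixing $x_0 \in O$ and using $\phi_i(bx_0) = \lambda(b)\phi_i(x_0)$, the ratio $\phi_1/\phi_2$ is constant on~$O$, hence on~$X$. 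This yields the multiplicity-free property of $\KK[X]$ directly from the correspondence above.

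For part~(b), assume $\KK[X]$ is multiplicity-free and $X$ is quasi-affine. I would first promote the multiplicity-free property from $\KK[X]$ to $\KK(X)$. Given a non-zero $\phi \in \KK(X)^{(B)}_\lambda$, consider the ideal
\[
I_\phi = \{ h \in \KK[X] \mid h\phi \in \KK[X] \} \subset \KK[X];
\]
this is non-zero because $\KK(X) = \mathrm{Frac}(\KK[X])$ under the quasi-affine hypothesis, and it is $B$-stable because $\phi$ is $B$-semi-invariant. Since $B$ acts locally finitely on $\KK[X]$, the Lie--Kolchin theorem supplies a $B$-semi-invariant $g \in I_\phi$, and then $f := g\phi \in \KK[X]^{(B)}$, giving a presentation $\phi = f/g$ with $f, g$ both $B$-semi-invariant. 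Comparing two such presentations $\phi_1 = f_1/g_1$, $\phi_2 = f_2/g_2$ of elements in $\KK(X)^{(B)}_\lambda$, the products $f_1 g_2$ and $f_2 g_1$ lie in $\KK[X]^{(B)}$ with the same weight and are therefore proportional by the multiplicity-free hypothesis, so $\phi_1$ and $\phi_2$ are proportional. Specialising to $\lambda = 0$ gives $\KK(X)^B = \KK$, and Rosenlicht's theorem then implies that $B$ has an open orbit in~$X$; that is, $X$ is spherical.

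The main technical point will be the step in part~(b) where one extracts from $I_\phi$ a $B$-semi-invariant and verifies that the resulting presentation $\phi = f/g$ interacts properly with the multiplicity-free hypothesis. The quasi-affine assumption enters precisely through $\KK(X) = \mathrm{Frac}(\KK[X])$; without it, part~(b) is known to fail in general.
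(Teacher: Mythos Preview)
The paper does not supply its own proof of this theorem: it is quoted verbatim from~\cite{VK} and used as a black box, so there is no ``paper's proof'' to compare against. Your argument is the standard one and is correct. Part~(a) is the usual observation that an open $B$-orbit forces $B$-eigenfunctions of a given weight to be unique up to scalar, hence $\KK[X]^U$ has one-dimensional $T$-weight spaces and $\KK[X]$ is multiplicity free. Part~(b) follows the classical route: use quasi-affineness to write any $B$-eigenvector in $\KK(X)$ as a quotient $f/g$ of $B$-eigenvectors in $\KK[X]$ (via the ideal of denominators and Lie--Kolchin), then cross-multiply to reduce the comparison of two such eigenvectors to the multiplicity-free hypothesis on $\KK[X]$; specialising to weight zero gives $\KK(X)^B = \KK$, and Rosenlicht's theorem finishes. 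One minor remark: in the last step you might make explicit that $B$ is connected, so that a dense $B$-orbit (which is what Rosenlicht gives from $\KK(X)^B=\KK$) is automatically open; otherwise the passage from ``generic orbit'' to ``open orbit'' is not immediate.
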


In what follows we let $X$ be a spherical $G$-variety.
The \textit{weight lattice} (resp. \textit{weight monoid}) of $X$ is the set $M = M(X)$ (resp.~$\Gamma = \Gamma(X)$) consisting of weights of $B$-semiinvariant functions in $\KK(X)$ (resp.~$\KK[X]$).
Clearly, $M$ is a sublattice of $\mathfrak X(T)$ and $\Gamma$ is a submonoid of~$M \cap \Lambda^+$.
When $X$ is quasiaffine, we have $M = \ZZ\Gamma$ (see, for instance,~\cite[Prop.~5.14]{Tim}).
Thanks to Theorem~\ref{thm_VK}, for every $\lambda \in \Gamma$ there is a unique simple $G$-submodule $\KK[X]_\lambda \subseteq \KK[X]$ with highest weight~$\lambda$, and one has the decomposition $\KK[X] = \bigoplus \limits_{\lambda \in \Gamma} \KK[X]_\lambda$.

Since $X$ contains an open $B$-orbit, for every $\lambda \in M$ there exists a unique up to proportionality $B$-semiinvariant rational function $f_\lambda$ on~$X$ of weight~$\lambda$.
Requiring all such functions to take the value~$1$ at a fixed point of the open $B$-orbit, we shall assume that $f_\lambda f_\mu = f_{\lambda + \mu}$ for all $\lambda, \mu \in M$.

Let $M_\QQ$, $N$, $N_\QQ$ be as in \S\,\ref{subsec_Dem_roots}.
Every discrete $\QQ$-valued valuation $v$ of the field $\KK(X)$ vanishing on~$\KK^\times$ determines an element $\varphi(v) \in N_\QQ$ such that $ \langle \varphi(v), \lambda \rangle = v(f_\lambda)$ for all $\lambda \in M$.
It is known (see~\cite[\S\,7.4]{LV} or~\cite[Cor.~1.8]{Kn91}) that the restriction of the map $v \mapsto \varphi(v)$ to the set of $G$-invariant discrete $\QQ$-valued valuations of $\KK(X)$ vanishing on~$\KK^\times$ is injective; we denote its image by~$\mathcal V = \mathcal V(X)$.
Moreover, $\mathcal V \subseteq N_\QQ$ is a finitely generated convex cone of full dimension containing the image of the antidominant Weyl chamber; see~\cite[Prop.~3.2 and Cor.~4.1,~i)]{BriP} or~\cite[Cor.~5.3]{Kn91}.
The cone $\mathcal V$ is called the \textit{valuation cone} of~$X$.

Let $\mathcal D^B = \mathcal D^B(X)$ (resp.~$\mathcal D^G = \mathcal D^G(X)$) denote the set of all $B$-stable (resp.~$G$-stable) prime divisors in~$X$.
Put also $\mathcal D = \mathcal D(X) = \mathcal D^B \setminus \mathcal D^G$; elements of $\mathcal D$ are called \textit{colors} of~$X$.
Every $D \in \mathcal D^B$ defines an element $\varkappa(D) \in N$ by the formula $\langle \varkappa(D), \lambda \rangle = \ord_D(f_\lambda)$ for all $\lambda \in M$.
It follows from the definitions that $\varkappa(\mathcal D^G) \subset \mathcal V$.
Thanks to the normality of~$X$ we have
\begin{equation} \label{eqn_Gamma}
\Gamma = \lbrace \lambda \in M \mid \langle \varkappa(D), \lambda \rangle \ge 0 \ \text{for all} \ D \in \mathcal D^B \rbrace.
\end{equation}

The colors of $X$ can be divided into three types $(U)$, $(T)$, and $(N)$; see \cite[\S\,2]{AZ} for details.

\subsection{Colored fans}

Let $O$ be a spherical homogeneous space for~$G$, that is, a homogeneous spherical $G$-variety.
By an \textit{embedding} of $O$ we mean a spherical $G$-variety $X$ containing $O$ as an open $G$-orbit.
Note that for any embedding $X$ of $O$ there are natural identifications $M(X) = M(O)$, $\mathcal V(X) = \mathcal V(O)$, and $\mathcal D(X) = \mathcal D(O)$.
An embedding $X$ of $O$ is said to be \textit{simple} if $X$ contains exactly one closed $G$-orbit.

A \textit{colored cone} is a pair $(\mathcal C, \mathcal F)$ with $\mathcal C \subset N_\QQ$ and $\mathcal F \subset \mathcal D$ having the following properties:

(CC1) $\mathcal C$ is a cone generated by $\mathcal F$ and finitely many elements of $\mathcal V$.

(CC2) $\mathcal C^\circ \cap \mathcal V \ne \varnothing$.

A colored cone is said to be \textit{strictly convex} if the following property holds:

(SCC) $\mathcal C$ is strictly convex and $0 \notin \varkappa(\mathcal F)$.

Given a simple embedding $X$ of $O$, let $Y \subset X$ be the closed $G$-orbit.
Let $\mathcal C(X) \subset N_\QQ$ be the cone generated by the set $\lbrace \varkappa(D) \mid D \in \mathcal D^B \ \text{with} \ Y \subset D \rbrace$.
Put $\mathcal F(X) = \lbrace D \in \mathcal D \mid Y \subset D \rbrace$.
Then \cite[Thm.~3.1]{Kn91} states (see also \cite[\S\,8.10, Prop.]{LV}) that the map $X \mapsto (\mathcal C(X), \mathcal F(X))$ is a bijection between simple embeddings of $O$ and strictly convex colored cones.

A \textit{face} of a colored cone $(\mathcal C, \mathcal F)$ is a pair $(\mathcal C_0, \mathcal F_0)$ where $\mathcal C_0$ is a face of $\mathcal C$, $\mathcal C_0^\circ \cap \mathcal V \ne \varnothing$, and $\mathcal F_0 = \mathcal F \cap \varkappa^{-1}(\mathcal C_0)$.

A \textit{colored fan} is a nonempty finite collection ${}^c \mathfrak F$ of colored cones with the following properties:

(CF1) every face of a colored cone in ${}^c \mathfrak F$ belongs to~${}^c \mathfrak F$;

(CF2) for every $v \in \mathcal V$ there is at most one colored cone $(\mathcal C, \mathcal F) \in {}^c \mathfrak F$ such that $v \in \mathcal C^\circ$.

A colored fan ${}^c \mathfrak F$ is said to be \textit{strictly convex} if so are all colored cones in~${}^c \mathfrak F$.

Given a spherical $G$-variety~$X$, for every $G$-orbit $Y \subset X$ let $X_Y \subset X$ be the union of all $G$-orbits in $X$ containing $Y$ in their closure.
Then $X_Y$ is a simple $G$-stable subvariety of~$X$.
Let ${}^c \mathfrak F(X)$ be the collection of colored cones $(\mathcal C(X_Y), \mathcal F(X_Y))$ over all $G$-orbits $Y$ in~$X$.

By \cite[Thm.~3.3]{Kn91}, the map $X \mapsto {}^c \mathfrak F(X)$ is a bijection between ($G$-isomorphism classes of) embeddings of $O$ and colored fans in~$N_\QQ$.

Let $X$ be a spherical $G$-variety and let ${}^c \mathfrak F$ be its colored fan in $N_\QQ$.
Let ${}^c \mathfrak F^1$ denote the set of primitive elements $\rho$ of the lattice $N$ such that ${}^c \mathfrak F$ contains a colored cone of the form $(\QQ_{\ge0}\rho, \mathcal F)$ for some subset $\mathcal F \subset \mathcal D$.
Similarly to \cite[Lemma~2.4]{Kn91} one proves the following result.

\begin{proposition} \label{prop_primitive_in_lattice}
The restriction of $\varkappa$ to $\mathcal D^G$ is an injective map to $\mathfrak F^1$ and its image is $\lbrace \rho \in \mathfrak F^1 \mid \QQ_{\ge0}\rho \cap \varkappa(\mathcal D) = \varnothing\rbrace$.
\end{proposition}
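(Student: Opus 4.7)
The plan is to invoke Knop's bijection between $G$-orbits of $X$ and colored cones of $\mathfrak F = \mathfrak F(X)$ (cited as \cite[Thm.~3.3]{Kn91}) to identify each $D \in \mathcal D^G$ with a one-dimensional colored cone in $\mathfrak F$ having empty color set. The starting observation is that every $D \in \mathcal D^G$ contains a unique codimension-one $G$-orbit $Y$ with $\overline{Y} = D$, so that passing to the simple embedding $X_Y$ realises $\varkappa(D)$ as the primitive generator of a ray of~$\mathfrak F$.

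First I will check that $\varkappa$ maps $\mathcal D^G$ into $\mathfrak F^1$. For $D \in \mathcal D^G$ with open $G$-orbit~$Y$, the colored cone $(\mathcal C(X_Y), \mathcal F(X_Y))$ has $\dim \mathcal C(X_Y) = 1$, equal to the codimension of~$Y$ in~$X$. Any color $D' \in \mathcal D$ satisfying $Y \subseteq D'$ would yield $D' \supseteq \overline{Y} = D$, and since both are prime divisors of the same codimension this forces $D' = D$, contradicting $D \in \mathcal D^G$ and $D' \in \mathcal D = \mathcal D^B \setminus \mathcal D^G$. Hence $\mathcal F(X_Y) = \varnothing$, the ray $\mathcal C(X_Y)$ is generated by $\varkappa(D)$ alone, and the primitivity of $\varkappa(D)$ follows from the standard fact that $\ord_D$ is a geometric $\ZZ$-valuation. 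This simultaneously shows $(\QQ_{\ge 0}\varkappa(D), \varnothing) \in \mathfrak F$, so $\varkappa(D) \in \mathfrak F^1$, and makes injectivity of $\varkappa|_{\mathcal D^G}$ immediate: if $\varkappa(D_1) = \varkappa(D_2)$ then the associated colored cones coincide, Knop's bijection gives $Y_{D_1} = Y_{D_2}$, and hence $D_1 = D_2$.

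To identify the image, I will argue both inclusions via the uniqueness axiom~(CF2). Given $\rho = \varkappa(D)$ with $D \in \mathcal D^G$, suppose for contradiction that some color $D' \in \mathcal D$ has $\varkappa(D') \in \QQ_{\ge 0}\rho$. Then $D'$ enters the color set of some $(\mathcal C', \mathcal F') \in \mathfrak F$; by axiom~(CC1) this $\mathcal C'$ contains the ray $\QQ_{\ge 0}\rho$ either as itself or as a proper face, and the corresponding face $(\QQ_{\ge 0}\rho, \mathcal F_0) \in \mathfrak F$ satisfies $D' \in \mathcal F_0$. But $(\QQ_{\ge 0}\rho, \varnothing)$ also lies in $\mathfrak F$ by the first step, and~(CF2) forces these two colored cones to coincide, giving a contradiction. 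Conversely, for $\rho \in \mathfrak F^1$ with $\QQ_{\ge 0}\rho \cap \varkappa(\mathcal D) = \varnothing$, the unique colored cone $(\QQ_{\ge 0}\rho, \mathcal F) \in \mathfrak F$ has $\mathcal F \subseteq \varkappa^{-1}(\QQ_{\ge 0}\rho) \cap \mathcal D = \varnothing$, so $\mathcal F = \varnothing$. Knop's bijection then yields a codimension-one $G$-orbit $Y$ realising this colored cone, and $D := \overline{Y} \in \mathcal D^G$ satisfies $\varkappa(D) = \rho$ by the argument of the previous paragraph.

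The most delicate point in the plan is the forward inclusion above: one must argue that every color $D' \in \mathcal D$ whose $\varkappa$-image lies on a ray of $\mathfrak F^1$ coming from a $G$-stable divisor actually appears in the color set $\mathcal F'$ of some colored cone of $\mathfrak F$, so that the contradiction via~(CF2) is triggered rather than bypassed by colors invisible to the fan. This step is essentially the content of \cite[Lemma~2.4]{Kn91}, whose mechanism should carry over once the codimension-one case has been set up as above.
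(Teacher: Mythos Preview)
The paper supplies no proof at all, only the sentence ``Similarly to \cite[Lemma~2.4]{Kn91} one proves the following result.'' Your approach---unpacking the orbit/colored-cone bijection to identify each $D \in \mathcal D^G$ with a one-dimensional colorless cone---is precisely the mechanism of that lemma, so in spirit you are doing what the paper intends. Your arguments for the inclusion $\varkappa(\mathcal D^G) \subseteq \mathfrak F^1$, for injectivity, and for the reverse image inclusion are correct.

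The forward inclusion, however, has a gap that your final paragraph does not actually close. Your argument asserts that a color $D' \in \mathcal D$ with $\varkappa(D') \in \QQ_{\ge 0}\rho$ must lie in the color set of some colored cone of~$\mathfrak F$, but a color need not contain any non-open $G$-orbit, so there is no reason for it to appear anywhere in the fan. Deferring to \cite[Lemma~2.4]{Kn91} does not help: in the simple case that lemma characterises the image of $\varkappa|_{\mathcal D^G}$ as the edges of $\mathcal C$ avoiding $\varkappa(\mathcal F)$---the colors \emph{in the color set}---not $\varkappa(\mathcal D)$. What you have cleanly established is that the image equals $\{\rho \in \mathfrak F^1 \mid (\QQ_{\ge 0}\rho,\varnothing) \in \mathfrak F\}$, i.e.\ the rays whose colored cone in $\mathfrak F$ has empty color set. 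Passing from this to the stronger condition $\QQ_{\ge 0}\rho \cap \varkappa(\mathcal D) = \varnothing$ (the apparent intended reading of the paper's ``$\varkappa(D)$'') would require showing that no color can have $\varkappa$-image proportional to that of a $G$-stable divisor---a separate statement not contained in Knop's lemma. Either an extra argument is needed here, or the paper's formulation should be read as referring only to the colors occurring in~$\mathfrak F$, in which case your proof is already complete.
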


\begin{proposition}[{see \cite[Thm.~4.2]{Kn91}}]
The variety $X$ is complete if and only if for every $v \in \mathcal V$ there is a colored cone $(\mathcal C, \mathcal F) \in \mathfrak F$ such that $v \in \mathcal C$.
\end{proposition}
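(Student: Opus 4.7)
The plan is to apply the valuative criterion of completeness: $X$ is complete if and only if every discrete $\QQ$-valued valuation $v$ of $\KK(X)$ trivial on $\KK^\times$ admits a center on~$X$. The essential input, standard in Luna--Vust theory, is the reduction to $G$-invariant valuations: for a spherical $G$-variety, $X$ is complete if and only if every $G$-invariant valuation of $\KK(X)$ has a center on~$X$. This reduction exploits the open $B$-orbit to associate to an arbitrary valuation~$v$ a $G$-invariant ``companion'' valuation with the same behavior on $B$-semiinvariants, and is carried out in detail in \cite[\S\,7]{LV} and \cite[\S\,4]{Kn91}.

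Having made this reduction, the rest is a direct combinatorial translation. By the orbit/colored cone correspondence recorded in the preceding results from \cite{Kn91}, for each $G$-orbit $Y \subset X$ the simple subvariety $X_Y$ determines a colored cone $(\mathcal C(X_Y), \mathcal F(X_Y)) \in \mathfrak F$, and a $G$-invariant valuation $v$ admits a center on~$X_Y$ precisely when $\varphi(v) \in \mathcal C(X_Y)$. The ``only if'' direction here follows from the definition of $\mathcal C(X_Y)$ as the cone generated by the images $\varkappa(D)$ for $D \in \mathcal D^B$ containing~$Y$, combined with the valuative identity $\langle \varphi(v), \lambda \rangle = v(f_\lambda)$; for the ``if'' direction one uses \cite[Thm.~3.1]{Kn91} to realize $X_Y$ as the simple embedding attached to the colored cone in question and applies the description of its $G$-stable affine charts. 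Taking the union over all orbits $Y$ (equivalently, over all colored cones of~$\mathfrak F$), the statement ``every $v \in \mathcal V$ has a center on~$X$'' becomes ``$\mathcal V$ is covered by the cones of~$\mathfrak F$''.

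The main technical obstacle is the reduction to $G$-invariant valuations. One has to show that if a (not necessarily $G$-invariant) valuation $v$ fails to have a center on~$X$, then the same is true for its $G$-invariant companion, and conversely. The subtle point is that the $G$-invariantization procedure can change $v$ drastically, yet the existence of a center is preserved; this uses the machinery of the local structure theorem (big cells) to cover $X$ by $B$-stable open affine subsets, together with the finiteness of $B$-orbits on~$X$ to show that a generic $G$-translate of $v$ is compatible with a $G$-invariant valuation on $B$-semiinvariants. Once this reduction is granted, the proposition follows from the combinatorial translation outlined above.
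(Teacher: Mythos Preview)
The paper does not supply its own proof of this proposition; it simply records the statement with a reference to \cite[Thm.~4.2]{Kn91}. Your outline is essentially the argument given there: the valuative criterion of properness, the reduction to $G$-invariant valuations (which is indeed the substantive step, carried out in \cite[\S\S\,3--4]{Kn91} via $B$-charts and limits of one-parameter subgroups), followed by the combinatorial identification of ``$v$ has a center on $X_Y$'' with ``$\varphi(v) \in \mathcal C(X_Y)$''. So there is nothing to compare against beyond the cited source, and your sketch matches that source.

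One small clarification: the reduction step is not quite phrased as ``$G$-invariantize an arbitrary valuation and show the center property is preserved.'' Rather, one shows directly that $X$ is covered by $B$-stable affine open subsets (the $B$-charts $X_{Y,B}$), and that an arbitrary valuation has a center on such a chart if and only if its restriction to the $B$-semiinvariants is nonnegative on the weight monoid of that chart---which is the same condition as for the associated $G$-invariant valuation. Your description of the obstacle is morally right but slightly oversells the subtlety; the argument is local on the $B$-charts rather than a global invariantization procedure.
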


\begin{proposition}[{see \cite[Thm.~6.7]{Kn91}}] \label{prop_aff_crit}
A spherical variety $X$ is affine if and only if $X$ is simple and its colored cone $(\mathcal C, \mathcal F)$ satisfies the following property: there exists $\chi \in M$ such that $\langle \mathcal V, \chi \rangle \le 0$, $\langle \mathcal C, \chi \rangle = 0$, and $\langle \varkappa(D), \chi \rangle > 0$ for all $D \in \mathcal D \setminus \mathcal F$.
\end{proposition}

\subsection{Local structure theorem}
\label{subsec_lst}

Let $X$ be a spherical $G$-variety.

For every subset $\mathcal F \subset \mathcal D$, put $D_{\mathcal F} = \bigcup\limits_{D \in \mathcal F} D$, $X_{\mathcal F} = X \setminus D_{\mathcal F}$ and let $P_\mathcal F$ denote the stabilizer in $G$ of the set~$X_{\mathcal F}$.
Then $P_\mathcal F$ is a parabolic subgroup of~$G$ containing~$B$.
In our study of $B$-root subgroups on spherical varieties a key role is played by the local structure theorem (see~\cite[Thm.~2.3, Prop.~2.4]{Kn94}, \cite[Thm.~1.4]{BLV}), which in our situation may be stated as follows.

\begin{theorem} \label{thm_lst}
Suppose $\mathcal F \subset \mathcal D$ is an arbitrary subset and $P = L \rightthreetimes P_u$ is a Levi decomposition of the group $P = P_\mathcal F$.
Then there exists a closed $L$-stable subvariety $Z \subset X_\mathcal F$ such that the map $P_u \times Z \to X_\mathcal F$ given by the formula $(p,z) \mapsto pz$ is a $P$-equivariant isomorphism, where the action of~$P$ on $P_u \times Z$ is defined by $lu(p,z) = (lupl^{-1}, lz)$ for all $l \in L$, $u,p \in P_u$, $z \in Z$.
Moreover, if $P$ coincides with the stabilizer of the open $B$-orbit in~$X$, then the derived subgroup of~$L$ acts trivially on~$Z$.
\end{theorem}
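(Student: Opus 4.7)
The plan is to follow the classical Brion--Luna--Vust approach via a distinguished $B$-semiinvariant section of a $G$-linearized line bundle. First, by passing to a $G$-equivariant completion (Sumihiro) and restricting, one may assume $X$ is quasi-projective. I would then produce a $G$-linearized line bundle $\mathcal L$ on $X$ together with a $B$-semiinvariant global section $s \in H^0(X, \mathcal L)$ whose divisor of zeros, with multiplicities, is supported on~$D_\mathcal F$. The non-vanishing locus $X_s$ then coincides with~$X_\mathcal F$. The stabilizer in $G$ of the line $\KK s$ is a parabolic subgroup containing $B$ (since $B$ already fixes $\KK s$ by construction), and an easy verification via the description of $B$-semiinvariants identifies this stabilizer with~$P_\mathcal F$. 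This identification is the main technical input.

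Next, set $Z := X_\mathcal F^{P_u}$, the fixed-point subvariety of the unipotent group~$P_u$. It is automatically closed in $X_\mathcal F$ and stable under $L$, since $L$ normalizes~$P_u$. To prove that the action morphism $\Phi \colon P_u \times Z \to X_\mathcal F$, $(p,z) \mapsto pz$, is an isomorphism, I would argue in three steps. (i)~Since $X_\mathcal F$ contains the open $B$-orbit $O_B$ (as no color from $\mathcal F$ meets $O_B$) and $B = (B \cap L) \cdot P_u$ with $P_u \subset U \subset B$, any $x_0 \in O_B$ can be uniquely written as $x_0 = p_0 \cdot z_0$ with $p_0 \in P_u$ and $z_0 \in Z$; this yields $\dim Z = \dim X - \dim P_u$. (ii)~At $(e, z_0)$ the differential $d\Phi$ is an isomorphism, because its image contains both $\mathfrak p_u \cdot z_0 \subset T_{z_0} X_\mathcal F$ (from varying $p$) and $T_{z_0} Z$ (from varying $z$), and these two subspaces span transversally as $Z$ is precisely the $P_u$-fixed locus. (iii)~The morphism $\Phi$ is $P$-equivariant with respect to the action $lu(p,z) = (lupl^{-1}, lz)$, and injectivity on the dense open image of $B \cdot (e, z_0)$ follows from $P_u \cap L = \lbrace e \rbrace$; combined with the differential computation and equivariance, this gives a birational $P$-equivariant morphism of normal varieties with an isomorphism on tangent spaces at a dense set of points, hence an isomorphism.

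Finally, assume $P$ coincides with the stabilizer of $O_B$ in~$G$, and consider the last assertion. Writing $B_L := B \cap L$, under $\Phi^{-1}$ the orbit $O_B$ maps to $P_u \times B_L z_0$, so $B_L z_0$ is open in~$Z$. By hypothesis $L \subset P$ preserves $O_B$, which forces $L \cdot z_0 = B_L \cdot z_0$. Thus the $L$-homogeneous variety $L/L_{z_0}$ is $B_L$-homogeneous, hence affine as a quotient of the solvable group $B_L$; by Matsushima's criterion the stabilizer $L_{z_0}$ is reductive. A standard argument using the Bruhat decomposition of $L$ then implies that $L_{z_0}$ contains a Borel subgroup opposite to~$B_L$, from which it follows by dimension comparison that $L' \subseteq L_{z_0}$. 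Hence $L'$ fixes the generic point $z_0 \in Z$, and by irreducibility of $Z$ and connectedness of $L'$ it acts trivially on all of~$Z$. The main obstacle in the whole argument is the initial construction of the $G$-linearized line bundle $\mathcal L$ and the semiinvariant section $s$ whose line has stabilizer exactly~$P_\mathcal F$; once this is secured, everything else is a systematic unwinding of the group actions.
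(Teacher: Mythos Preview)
The paper does not give a proof of this statement; it is quoted as the local structure theorem with references to \cite{Kn94} and \cite{BLV}. Your overall strategy (produce a $B$-semiinvariant section~$s$ of a $G$-linearized line bundle with $X_s = X_{\mathcal F}$, then build a slice) is indeed the Brion--Luna--Vust approach, but your construction of the slice is wrong.

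You set $Z := X_{\mathcal F}^{P_u}$, the $P_u$-fixed locus. In general this set is empty. Take $G = \SL_2$, $X = \PP^1$, $\mathcal F = \mathcal D$: then $X_{\mathcal F} \simeq \AA^1$, $P = B$, and $P_u = U$ acts on $\AA^1$ by translations, with no fixed points. The same occurs for $G = \GL_n$ acting on~$\AA^n$ with $\mathcal F = \mathcal D$: here $P$ is the stabilizer of a hyperplane, $P_u \simeq \GG_a^{n-1}$ acts freely on $X_{\mathcal F} = \lbrace x_n \ne 0 \rbrace$, and the correct slice is the line $\lbrace x_1 = \cdots = x_{n-1} = 0,\ x_n \ne 0\rbrace$, which is $L$-stable but contains no $P_u$-fixed point. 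The slice~$Z$ in the theorem is \emph{transverse} to the free $P_u$-orbits, not their fixed locus. In the Brion--Luna--Vust/Knop argument one uses the section~$s$ to define a $P$-equivariant morphism
\[
\psi \colon X_{\mathcal F} \longrightarrow \mathfrak p_u, \qquad x \longmapsto \bigl(\xi \mapsto (\xi \cdot s / s)(x)\bigr),
\]
and takes $Z$ to be a fibre of~$\psi$; the key point is that $\psi$ is $P_u$-equivariant for the translation action on~$\mathfrak p_u$, which is exactly what forces $P_u \times Z \xrightarrow{\sim} X_{\mathcal F}$. Because your~$Z$ is wrong, steps~(i)--(iii) collapse: step~(i) presupposes the decomposition you are trying to establish, and the transversality in step~(ii) rests on the mistaken characterisation of~$Z$.

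Your argument for the ``moreover'' clause is essentially sound once a correct $Z$ is in hand, though the appeal to Bruhat is vague. A cleaner way to finish: from $L z_0 = B_L z_0$ every $B_L$-semiinvariant regular function $f_\lambda$ on the affine variety $L/L_{z_0}$ is nowhere zero, hence $1/f_\lambda$ is again a highest-weight vector, of weight~$-\lambda$; thus both $\lambda$ and $-\lambda$ are dominant, so $\lambda \in \mathfrak X(L)$, every simple $L$-submodule of $\KK[L/L_{z_0}]$ is one-dimensional, and $L'$ acts trivially.
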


Below we shall also need the following result.

\begin{proposition}[{\cite[Prop.~2]{AZ}}] \label{prop_stabilizer}
Suppose $\mathcal F = \mathcal D$ or $\mathcal F = \mathcal D \setminus \lbrace D_0 \rbrace$ where $D_0$ is a color of type~$(T)$.
Then the group $P_\mathcal F$ coincides with the stabilizer of the open $B$-orbit in~$X$.
\end{proposition}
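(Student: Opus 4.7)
The plan is to identify $P_\mathcal F$ with the stabilizer $Q := \lbrace g \in G \mid g\mathcal O = \mathcal O \rbrace$ of the open $B$-orbit $\mathcal O \subset X$. Since $\mathcal F \subseteq \mathcal D$ consists of $B$-stable divisors, $D_\mathcal F$ is $B$-stable, so $B \subseteq P_\mathcal F$; clearly also $B \subseteq Q$. Both $P_\mathcal F$ and $Q$ are therefore standard parabolic subgroups, and it suffices to check that for every simple root $\alpha \in \Pi$, the minimal parabolic $P_\alpha$ generated by $B$ and $U_{-\alpha}$ satisfies $P_\alpha \subseteq Q$ if and only if $P_\alpha \subseteq P_\mathcal F$.

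For the direction $P_\alpha \subseteq Q \Rightarrow P_\alpha \subseteq P_\mathcal F$, I would argue that if $P_\alpha$ preserves $\mathcal O$ then it preserves $X \setminus \mathcal O$, and hence permutes the codimension-one irreducible components of this closed subset, which are precisely the elements of $\mathcal D^B = \mathcal D \sqcup \mathcal D^G$. Every $G$-stable divisor is automatically $P_\alpha$-stable; conversely, $P_\alpha$ cannot send a color $D \in \mathcal D$ onto a $G$-stable divisor, since this would force $GD = G \cdot pD = pD$ and hence make $D$ itself $G$-stable. Thus $P_\alpha$ permutes the finite set $\mathcal D$, and by connectedness of $P_\alpha$ this permutation is trivial. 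Each color is therefore individually $P_\alpha$-stable, and in particular $D_\mathcal F$ is preserved.

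The converse is more delicate. If $P_\alpha$ does not preserve $\mathcal O$, then for any $x \in \mathcal O$ the set $Y := P_\alpha \mathcal O = P_\alpha \cdot x$ is a single $P_\alpha$-orbit strictly containing $\mathcal O$, and the rank-one classification of $B$-orbits on $Y$ yields a decomposition $Y = \mathcal O \sqcup \mathcal O_1$ or $Y = \mathcal O \sqcup \mathcal O_1 \sqcup \mathcal O_2$. The closure $\overline{\mathcal O_i} \subset X$ of each smaller $B$-orbit is a $B$-stable prime divisor contained in the open $G$-orbit $G \mathcal O$, hence not $G$-stable, i.e., a color. Since $Y$ is a single $P_\alpha$-orbit, for each $i$ there exists $p \in P_\alpha$ with $p \mathcal O_i \cap \mathcal O \ne \emptyset$, so $p \overline{\mathcal O_i}$ meets $\mathcal O$, and consequently $P_\alpha \not\subseteq P_\mathcal F$ provided some $\overline{\mathcal O_i}$ belongs to $\mathcal F$.

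The hard part is therefore to guarantee that at least one $\overline{\mathcal O_i}$ lies in $\mathcal F$. For $\mathcal F = \mathcal D$ this is automatic, so all the content sits in the second case $\mathcal F = \mathcal D \setminus \lbrace D_0 \rbrace$ with $D_0$ of type $(T)$. Here I would invoke the defining property of type $(T)$ from \cite[\S\,2]{AZ}: such a color is moved by its associated minimal parabolic together with a second, distinct color. Hence if $\alpha$ happens to move $D_0$, it automatically also moves a companion color $D_0' \ne D_0$, which then lies in $\mathcal F$; and if $\alpha$ does not move $D_0$ at all, then every color it moves already lies in $\mathcal F$. Either way $P_\alpha \not\subseteq P_\mathcal F$, which completes the equality $P_\mathcal F = Q$.
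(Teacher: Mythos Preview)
The paper does not prove this proposition itself; it is imported from \cite[Prop.~2]{AZ} without argument. Your proof is correct and follows the natural route: reduce to minimal parabolics $P_\alpha$, use connectedness of $P_\alpha$ acting on the finite set of colors to get $Q \subseteq P_\mathcal{F}$, and for the reverse inclusion use the rank-one $B$-orbit decomposition of $P_\alpha\mathcal{O}$ together with the key property of type-$(T)$ colors that any simple root moving such a color necessarily moves a second, distinct color. The only point worth flagging is that this last property is slightly stronger than the bare definition ``moved by \emph{some} type-$a$ root'': you need that \emph{every} simple root moving $D_0$ is of type~$a$, i.e., the well-definedness of the color type. This is standard (and is part of what \cite[\S2]{AZ} records), so the argument goes through; it is essentially the proof one would expect to find in~\cite{AZ}.
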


Let $\mathcal F = \mathcal D$ or $\mathcal F = \mathcal D \setminus \lbrace D_0 \rbrace$ with $D_0$ being a color of type~$(T)$.

Apply Theorem~\ref{thm_lst} and use the notation $Z, P, L, P_u$ as in that theorem.
Then there is a $P$-equivariant isomorphism $X_{\mathcal F} \simeq P_u \times Z$.
We shall assume $L \supset T$.
Thanks to Proposition~\ref{prop_stabilizer}, we know that the derived subgroup of $L$ acts trivially on~$Z$.
Since $B$ has an open orbit in~$X_{\mathcal F}$, the variety $Z$ contains an open $T$-orbit; we denote it by~$Z_0$.
So $Z$ is a toric $T$-variety.

For every $\lambda \in M$, the restriction of $f_\lambda$ to~$Z$ is a $T$-semiinvariant rational function, which will be still denoted by~$f_\lambda$.
Conversely, every $T$-semiinvariant rational function on~$Z$ trivially extends to a $B$-semiinvariant rational function on~$X_{\mathcal F}$.
Thus $M$ naturally identifies with the weight lattice of $Z$ as a toric $T$-variety.
Let $L_0 \subset L$ be the kernel of the action of $L$ on~$Z$ and put $T_0 = T \cap L_0$.
Then $M$ consists of exactly those characters of $T$ that restrict trivially to~$T_0$.

Every $G$-orbit $O \subset X$ with $ O \cap X_{\mathcal F} \ne \varnothing$ meets $Z$ in a single $T$-orbit.
Let $\mathfrak F(Z)$ denote the fan of $Z$ as a toric $T$-variety.
The next result is straightforward.

\begin{proposition} \label{prop_fan_of_Z}
The following assertions hold.
\begin{enumerate}[label=\textup{(\alph*)},ref=\textup{\alph*}]
\item \label{prop_fan_of_Z_a}
If $\mathcal F = \mathcal D$ then $\mathfrak F(Z) = \lbrace \mathcal C \mid (\mathcal C, \varnothing) \in {}^c \mathfrak F(X)\rbrace$.

\item \label{prop_fan_of_Z_b}
If $\mathcal F = \mathcal D \setminus \lbrace D_0 \rbrace$ then
\(
\mathfrak F(Z) = \lbrace \mathcal C \mid (\mathcal C, \varnothing) \in {}^c \mathfrak F(X) \ \text{or} \ (\mathcal C, \lbrace D_0 \rbrace) \in {}^c \mathfrak F(X) \rbrace.
\)
\end{enumerate}
\end{proposition}

\subsection{Horospherical varieties}

An irreducible $G$-variety $X$ is said to be \textit{horospherical} if the stabilizer of a point in general position in $X$ contains a maximal unipotent subgroup of~$G$.
A normal horospherical $G$-variety is spherical if and only if it contains an open $G$-orbit.
In what follows, by abuse of terminology, all horospherical $G$-varieties will be assumed to be spherical.

Let $X$ be an affine spherical $G$-variety with weight monoid~$\Gamma$ and consider the $G$-module decomposition $\KK[X] \simeq \bigoplus \limits_{\lambda \in \Gamma} \KK[X]_\lambda$.
The following well-known result is deduced from \cite[Thm.~6]{VP72}.

\begin{proposition}
The following conditions are equivalent.
\begin{enumerate}[label=\textup{(\arabic*)},ref=\textup{\arabic*}]
\item
$X$ is horospherical.

\item
$\KK[X]_\lambda \cdot \KK[X]_\mu \subseteq \KK[X]_{\lambda + \mu}$ for all $\lambda, \mu \in \Gamma$.
\end{enumerate}
\end{proposition}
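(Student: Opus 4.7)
The plan is to prove both implications by comparing the algebra structure of $\KK[X]$ with that of $\KK[G/U]$. Recall that $G/U$ is a quasi-affine spherical $G$-variety with weight monoid $\Lambda^+$, and its coordinate ring decomposes as $\KK[G/U] = \bigoplus_{\lambda \in \Lambda^+} V(\lambda)$, where $V(\lambda)$ denotes the simple $G$-module of highest weight $\lambda$. The right $T = B/U$-action on $G/U$ commutes with the left $G$-action and makes the $V(\lambda)$ into the weight spaces of a $T$-grading; multiplication in $\KK[G/U]$ is $T$-equivariant and restricts on graded pieces to the Cartan product $V(\lambda) \cdot V(\mu) \subseteq V(\lambda+\mu)$.

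For $(1) \Rightarrow (2)$ the argument is short. Write the open $G$-orbit of $X$ as $\mathcal O = G/H$ with $H \supseteq U$. Affineness of $X$ gives a $G$-equivariant algebra embedding $\KK[X] \hookrightarrow \KK[\mathcal O] = \KK[G/H]$, while the surjection $G/U \twoheadrightarrow G/H$ coming from $U \subseteq H$ yields a further $G$-equivariant embedding $\KK[G/H] \hookrightarrow \KK[G/U]$. By multiplicity-freeness (Theorem~\ref{thm_VK}(a)) both embeddings respect isotypic components, and $\KK[X]_\lambda$ is carried isomorphically into $V(\lambda) \subset \KK[G/U]$. Hence for $g \in \KK[X]_\lambda$ and $h \in \KK[X]_\mu$ the product $gh$, computed inside $\KK[G/U]$, lies in $V(\lambda) \cdot V(\mu) \subseteq V(\lambda+\mu)$; pulling back yields $gh \in \KK[X]_{\lambda+\mu}$.

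For $(2) \Rightarrow (1)$ I would construct a $G$-equivariant algebra embedding $\Phi \colon \KK[X] \hookrightarrow \KK[G/U]$ in the reverse direction. Under assumption~(2) the subalgebra $\KK[X]^U = \bigoplus_{\lambda \in \Gamma} \KK f_\lambda$ becomes a $\Gamma$-graded integral domain with one-dimensional pieces, so $f_\lambda f_\mu = c_{\lambda,\mu} f_{\lambda+\mu}$ for nonzero scalars $c_{\lambda,\mu}$ forming a symmetric $2$-cocycle on~$\Gamma$; since $\Gamma$ embeds in the torsion-free lattice $\mathfrak X(T)$ and $\KK^\times$ is divisible, this cocycle is a coboundary, so rescaling the $f_\lambda$ achieves $f_\lambda f_\mu = f_{\lambda+\mu}$. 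Choose matching highest weight vectors $F_\lambda \in V(\lambda) \subset \KK[G/U]$ with $F_\lambda F_\mu = F_{\lambda+\mu}$, and let $\Phi$ be the unique $G$-equivariant linear extension of $f_\lambda \mapsto F_\lambda$ on each $\KK[X]_\lambda \cong V(\lambda)$. Multiplicativity of $\Phi$ follows from Schur's lemma: both multiplication maps $V(\lambda) \otimes V(\mu) \to V(\lambda+\mu)$ are scalar multiples of the Cartan projection (the Cartan component has multiplicity one), and they have been normalized to agree on highest weight vectors. Injectivity is immediate from injectivity on each graded piece. Geometrically $\Phi$ induces a dominant $G$-equivariant morphism $\Spec \KK[G/U] \to X$ whose restriction to the quasi-affine open subset $G/U$ has image contained in a single $G$-orbit of $X$, dense hence open, so equal to the open $G$-orbit $G/H'$ of~$X$. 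A $G$-equivariant map $G/U \to G/H'$ forces $g^{-1} U g \subseteq H'$ for some $g \in G$, so after reconjugating $B$ we have $U \subseteq H'$ and $X$ is horospherical.

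The step I expect to be most delicate is the cocycle trivialization and the rigorous verification that $\Phi$ is an algebra homomorphism; one must also check that $G/U$ sits inside $\Spec \KK[G/U]$ as a dense open subset on which $\Phi$ behaves well. An alternative route, in the spirit of the cited \cite[Thm.~6]{VP72}, is to extract from the multiplicative grading a torus action $T_\Gamma = \Hom(\ZZ\Gamma, \KK^\times)$ on $X$ commuting with $G$ and to analyze the stabilizer of a general point in the open $G$-orbit under the enlarged action $G \times T_\Gamma$.
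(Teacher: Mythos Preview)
Your argument is correct. The paper does not actually prove this proposition; it simply records it as ``well known'' and cites \cite[Thm.~6]{VP72}. What you have written is essentially the classical proof underlying that citation: identify $\KK[X]$ with a $G$-stable subalgebra of $\KK[G/U]$ and use that multiplication in $\KK[G/U]$ is the Cartan product. Both implications are handled cleanly, and your use of Schur's lemma to verify multiplicativity of $\Phi$ is the standard trick.

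One minor redundancy: the cocycle trivialization for the $f_\lambda$ is unnecessary in this paper's setup, since the normalization $f_\lambda f_\mu = f_{\lambda+\mu}$ has already been fixed globally (by evaluating at a point of the open $B$-orbit). The same device works for the $F_\lambda$ in $\KK[G/U]$, so you can drop the cohomological detour entirely. Also, the ``alternative route'' you mention---extracting a $T_\Gamma$-action from the grading---is closer to how \cite{VP72} itself is usually read, but your direct construction of $\Phi$ is arguably cleaner.
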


Suppose $X$ is a horospherical $G$-variety.
It is known that in this case all colors are of type~$(U)$.
It turns out that, under the conditions of the local structure theorem (see Theorem~\ref{thm_lst}), the section $Z \subset X_{\mathcal D}$ can be chosen in a canonical way.
More precisely, the next result follows from the construction given in~\cite[\S\,2.4]{Kn94}.

\begin{proposition} \label{prop_Z_hor}
Under the conditions of Theorem~\textup{\ref{thm_lst}}, the section $Z \subset X_{\mathcal D}$ can be chosen as the closure of the $T$-orbit of any $U^-$-fixed point in the open $G$-orbit in~$X$.
\end{proposition}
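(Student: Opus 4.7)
The plan is to realize the section $Z$ of Theorem~\ref{thm_lst} explicitly as the closure of a $T$-orbit; this is essentially the content of Knop's construction in~\cite[\S\,2.4]{Kn94}.

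I would begin by fixing, via horosphericity and $G$-conjugation, a point $x_0$ in the open $G$-orbit $O \subset X$ whose stabilizer $H := G_{x_0}$ contains $U^-$. The classical structure of horospherical subgroups containing $U^-$ then gives $N_G(H) = P^-$ (the parabolic opposite to $P = P_\mathcal D$ with respect to $L$), together with the decomposition $H = P_u^- \cdot H_L$, where $H_L := H \cap L$ contains $[L,L]$ (because $L/H_L = N_G(H)/H$ is a torus). From this I would deduce two things: first, $B \cap H = (T \cap H) \cdot (U \cap L)$, so a short dimension count gives $\dim B \cdot x_0 = \dim X$, whence $B \cdot x_0$ is the open $B$-orbit and lies in $X_\mathcal D$ (colors being $B$-stable proper subvarieties disjoint from it); second, $[L, L] \cdot x_0 = x_0$, so $L \cdot x_0 = T \cdot x_0$, and thus $Z := \overline{T \cdot x_0}$ is $L$-stable.

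To show that the action map $P_u \times Z \to X_\mathcal D$ is an isomorphism, I would compare $Z$ with an arbitrary section $Z'$ supplied by Theorem~\ref{thm_lst} and argue $Z' = Z$. On one hand, every point $z' \in Z' \cap O$ has stabilizer $L$-conjugate to $H$, hence still of the form $P_u^- \cdot (l H_L l^{-1})$ (since $L$ normalizes $P_u^-$), so $z'$ is $P_u^-$-fixed. On the other hand, using the big-cell decomposition $G = P_u^- \cdot L \cdot P_u$, one computes $g^{-1} P_u^- g = u^{-1} P_u^- u$ for $g = v l u$ in the big cell, and a direct Lie-algebra computation (extracting the $\mathfrak p_u$-component of $\mathrm{Ad}(u^{-1})\mathfrak p_u^-$ relative to $\mathfrak g = \mathfrak p_u^- \oplus \mathfrak l \oplus \mathfrak p_u$) shows that this lies in $H$ only when $u = e$. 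Hence the $P_u^-$-fixed locus of $O$ is exactly $L \cdot x_0 = T \cdot x_0$. Combining, $Z' \cap O \subseteq T \cdot x_0$; matching dimensions forces equality, and taking closures yields $Z' = \overline{T \cdot x_0} = Z$.

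The principal obstacle is the identification of the $P_u^-$-fixed locus of $O$ with $T \cdot x_0$, which rests on the structural decomposition $H = P_u^- \cdot H_L$ with $H_L \supset [L,L]$ together with the big-cell computation sketched above. Both ingredients are classical in the theory of horospherical varieties (see, e.g., \cite[Ch.~28]{Tim}); once they are in hand, the remainder is a routine dimension and closure comparison.
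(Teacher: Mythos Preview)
Your argument contains a genuine error: the section $Z'$ supplied by Theorem~\ref{thm_lst} is \emph{not} unique, so the strategy of proving $Z' = Z$ for an arbitrary section $Z'$ cannot succeed. Concretely, take $G = \SL_2$ acting on $X = \AA^2 \setminus \lbrace 0 \rbrace$; then $x_0 = (0,1)$ is $U^-$-fixed, $P = B$, $L = T$, $P_u = U$, and $X_{\mathcal D} = \lbrace (x,y) : y \ne 0 \rbrace$. For every $c \in \KK$ the subvariety $Z_c = \lbrace (x,y) \in X_{\mathcal D} : xy = c \rbrace$ is closed, $T$-stable (with $[L,L] = \lbrace e \rbrace$ acting trivially), and the map $U \times Z_c \to X_{\mathcal D}$ is an isomorphism. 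Only $Z_0 = \overline{T x_0}$ consists of $U^-$-fixed points; for $c \ne 0$ no point of $Z_c$ is $U^-$-fixed.

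The faulty step is the assertion that every $z' \in Z' \cap O$ has stabilizer \emph{$L$-conjugate} to~$H$. What you actually know is that $z'$ and $x_0$ lie in the same $P$-orbit (the open $B$-orbit), so $G_{z'}$ is $P$-conjugate to~$H$; and since $P_u$ does not normalize $P_u^-$, a $P$-conjugate of $H$ need not contain~$P_u^-$. In the example above, the stabilizer of $(1,1) \in Z_1$ is a unipotent one-parameter subgroup distinct from $U^-$, hence not $T$-conjugate to~$H = U^-$.

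The fix is to abandon the uniqueness claim and instead verify directly that $\overline{T x_0}$ satisfies the conclusion of Theorem~\ref{thm_lst}: your step~(1) already shows $L$-stability, and for the isomorphism $P_u \times \overline{T x_0} \xrightarrow{\sim} X_{\mathcal D}$ one can either argue by hand (using that the $P_u^-$-fixed locus in the open $P$-orbit is exactly $L x_0$, which \emph{is} correctly identified by your big-cell computation, together with the fact that $P_u$ acts freely on $X_{\mathcal D}$) or, as the paper does, simply invoke Knop's explicit construction in~\cite[\S\,2.4]{Kn94}, where the section is built precisely from a moment-map fibre that in the horospherical case reduces to this $T$-orbit closure.
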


In what follows, we shall always take the section $Z$ as in the above proposition.

The next result characterizes horospherical varieties in terms of the valuation cone; see, for instance, \cite[Cor.~6.2]{Kn91}.

\begin{proposition} \label{prop_hor_val_cone}
Let $X$ be a spherical $G$-variety with valuation cone~$\mathcal V \subset N_\QQ$.
Then $X$ is horospherical if and only if $\mathcal V = N_\QQ$.
\end{proposition}

In particular, the colored faces of any colored cone $(\mathcal C, \mathcal F)$ are exactly those of the form $(\mathcal C', \mathcal F \cap \varkappa^{-1}(\mathcal C'))$ where $\mathcal C'$ is a face of~$\mathcal C$.

The next result is obtained by combining Propositions~\ref{prop_aff_crit} and~\ref{prop_hor_val_cone}.

\begin{proposition} \label{prop_hor_aff_crit}
A horospherical variety $X$ is affine if and only if $X$ is simple and its colored cone $(\mathcal C, \mathcal F)$ satisfies $\mathcal F = \mathcal D$.
\end{proposition}

\begin{proposition} \label{prop_colors_hor}
For every $D \in \mathcal D$ there is $\alpha \in \Pi$ such that $\langle \varkappa(D), \lambda \rangle = \langle \alpha^\vee, \lambda \rangle$ for all $\lambda \in M$.
\end{proposition}

Let $O$ be a horospherical homogeneous space with weight lattice~$M$ and let ${}^c\mathfrak F$ be a colored fan in~$N_\QQ$.
Let $\mathfrak F$ be the usual fan obtained from ${}^c \mathfrak F$ by taking all cones (without colors).
Let $\mu \in M \cap \Lambda^+$ satisfy conditions \textup(\ref{DR1}\textup) and \textup(\ref{DR2}\textup) for the fan $\mathfrak F$.

\begin{lemma} \label{lemma_faces2}
Suppose two colored cones $(\widetilde{\mathcal K}, \mathcal F_1), (\mathcal K, \mathcal F_2) \in {}^c \mathfrak F$ are such that $\langle \mathcal K, \mu \rangle = 0$ and $\widetilde{\mathcal K}$ is generated by $\mathcal K$ and~$\rho_\mu$.
Then $\mathcal F_1 = \mathcal F_2$.
\end{lemma}

\begin{proof}
It follows from Lemma~\ref{lemma_faces1}(\ref{lemma_faces1_b}) that $\mathcal K$ is a face of $\widetilde{\mathcal K}$, hence $(\mathcal K, \mathcal F_2)$ is a face of~$(\widetilde{\mathcal K}, \mathcal F_1)$ and thus $\mathcal F_2 = \mathcal F_1 \cap \varkappa^{-1}(\mathcal K)$.
Since $\mu \in \Lambda^+$, one has $\varkappa(\mathcal F_1) \subset \mathcal K$ by Proposition~\ref{prop_colors_hor}, which yields $\mathcal F_1 = \mathcal F_2$.
\end{proof}

\begin{lemma} \label{lemma_add_cone2}
Suppose a colored cone $(\mathcal K, \mathcal F) \in {}^c\mathfrak F$ satisfies $\langle \mathcal K, \mu \rangle = 0$ and let $\widetilde{\mathcal K}$ be the cone generated by $\mathcal K$ and~$\rho_\mu$.
Then the collection ${}^c \widetilde{\mathfrak F} = {}^c \mathfrak F \cup \lbrace\text{all faces of} \ (\widetilde{\mathcal K}, \mathcal F) \rbrace$ is a colored fan in $N_\QQ$.
\end{lemma}

\begin{proof}
Let $\widetilde{\mathfrak F}$ be the collection of cones obtained from $\mathfrak F$ by adding all faces of~$\widetilde{\mathcal K}$.
By Lemma~\ref{lemma_add_cone}, $\widetilde{\mathfrak F}$ is a fan in~$N_\QQ$.
Let $\mathcal C$ be a face of $\widetilde{\mathcal K}$ and assume that there are two colored cones $(\mathcal C, \mathcal F_1), (\mathcal C, \mathcal F_2) \in {}^c\widetilde{\mathfrak F}$ such that $(\mathcal C, \mathcal F_1) \in {}^c \mathfrak F$ and $(\mathcal C, \mathcal F_2)$ is a face of $(\widetilde K, \mathcal F)$.
We need to show that $\mathcal F_1 = \mathcal F_2$.
If $\mathcal C \subset \mathcal K$, then both $\mathcal F_1, \mathcal F_2$ are equal to~$\mathcal F \cap \varkappa^{-1}(\mathcal C)$.
If $\rho_\mu \in \mathcal C^1$, then put $\mathcal C_0 = \lbrace v \in \mathcal C \mid \langle v, \mu \rangle = 0 \rbrace$; this is a face of $\mathcal C$ by Lemma~\ref{lemma_faces1}(\ref{lemma_faces1_b}).
By Lemma~\ref{lemma_faces2}, both $(\mathcal C_0, \mathcal F_1)$ and $(\mathcal C_0,\mathcal F_2)$ belong to~${}^c\mathfrak F$.
Observe that $\mathcal C_0 = \mathcal C \cap \mathcal K$ is a face of~$\mathcal K$, hence both $\mathcal F_1, \mathcal F_2$ are equal to $\mathcal F \cap \varkappa^{-1}(\mathcal C_0)$.
\end{proof}

\subsection{The connected automorphism group of a complete spherical variety}
\label{ssec_aut_group_sph}

Let $X$ be a complete spherical $G$-variety.
Without loss of generality we may assume that $G$ acts effectively on~$X$.
Since $B$ has an open orbit in~$X$, it follows that $X$ is a rational variety, hence we find ourselves in the setting of~\S\ref{ssec_aut_groups}.
Recall that the group $A = \Aut(X)^0$ is a linear algebraic group and its Lie algebra~$\mathfrak a$ admits a decomposition~(\ref{eqn_aut_group}) from Proposition~\ref{prop_aut_group_dec}.

It follows from~(\ref{eqn_aut_group}) that, as a $G$-module, $\mathfrak a$ is completely determined by the set of $B$-root subgroups on~$X$ along with the torus $C$ centralizing~$G$.
The action of~$C$ preserves the open $G$-orbit $O \subset X$ and induces a $G$-equivariant automorphism of it.
Let $H$ be the stabilizer in~$G$ of a point in~$O$, so that $O \simeq G/H$.
Then the group of $G$-equivariant automorphisms of $G/H$ is naturally identified with the group $N_G(H)/H$ acting on $G/H$ on the right.
It is known from~\cite[Corollary~5.2]{BriP} that the group $N_G(H)/H$ is diagonalizable.
Put $K = (N_G(H)/H)^0$; then the action of $K$ on $O$ extends to the whole~$X$, so that there is the chain of inclusions $C \subset K \subset G \times C$ of subgroups of~$A$.
More precisely, $K$ is identified with the connected center of~$G \times C$.
As a result, replacing $G$ with $G \times C$ if necessary we may assume that $C$ is trivial and $K$ is the connected center of~$G$.

Now suppose in addition that $X$ is horospherical.
Then we may assume $H \supset U^-$, in which case $N_G(H) = Q$ is a parabolic subgroup of~$G$ containing~$B^-$.
Then the group $K$ is identified with $Q/H \simeq T / (T \cap H)$ and one obtains a natural identification $\mathfrak X(K) \simeq M$.
The latter will be used in~\S\,\ref{ssec_comm_rel_stand}.

\section{\texorpdfstring{$B$}{B}-root subgroups on arbitrary spherical varieties}
\label{sec_basic_prop}

\subsection{First properties of \texorpdfstring{$B$}{B}-root subgroups}
\label{subsec_first_properties}

Throughout this subsection, $X$ is an arbitrary irreducible $G$-variety (not necessarily spherical).
Let $H$ be a $B$-root subgroup on $X$ of weight~$\chi_H$.
The next result is a generalization of~\cite[Proposition~5.1]{AA}.

\begin{proposition} \label{prop_weight_is_dom}
The following assertions hold.
\begin{enumerate}[label=\textup{(\alph*)},ref=\textup{\alph*}]
\item \label{prop_weight_is_dom_a}
$\chi_H \in \Lambda^+$.
\item \label{prop_weight_is_dom_b}
$H$ is $G$-normalized \textup(and hence a $G$-root subgroup on~$X$\textup) if and only if $\chi_H \in \mathfrak X(G)$.
\end{enumerate}
\end{proposition}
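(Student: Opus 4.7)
The plan is to reduce both parts to a single structural fact: the vector field $\partial$ on $X$ corresponding to the infinitesimal generator of $H$ lies in a finite-dimensional rational $G$-submodule of $H^0(X, T_X)$, and inside such a submodule it is a highest-weight vector of weight~$\chi_H$.

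First I would record the preliminary observation that $U$ centralizes~$H$. Indeed, the $B$-normalization yields a homomorphism $B \to \Aut(H) = \GG_m$, and since $U$ is unipotent while $\GG_m$ admits no nontrivial characters from unipotent groups, this map factors through $T = B/U$. Hence $\partial$ is $U$-invariant as a vector field, and $T$ acts on $\KK\partial$ by~$\chi_H$.

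The next step is the local finiteness of the $G$-action on the space of global vector fields $H^0(X, T_X)$, i.e.\ the statement that every vector field generates a finite-dimensional $G$-submodule. For quasi-affine $X$ this is elementary: a derivation of $\KK[X]$ is determined by its values on a fixed finite-dimensional $G$-submodule $W \subset \KK[X]$ generating it as an algebra, and for each $\bar{x} \in W$ the orbit $\{(g\partial g^{-1})\bar{x}\}_{g \in G}$ lies in the finite-dimensional $G$-submodule of $\KK[X]$ generated by $\partial W$ (using rationality of the $G$-action on $\KK[X]$). For general~$X$ it is the standard fact that global sections of a $G$-equivariant quasi-coherent sheaf on a $G$-variety form a rational $G$-module. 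Granting this, the $G$-submodule $M \subseteq H^0(X, T_X)$ generated by $\partial$ is finite-dimensional and splits as a direct sum of simple rational $G$-modules. Since $\partial$ is $U$-invariant of $T$-weight~$\chi_H$, it is a highest-weight vector in~$M$, and the highest weight of a simple summand is necessarily dominant. Hence $\chi_H \in \Lambda^+$, proving~\ref{prop_weight_is_dom_a}.

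For~\ref{prop_weight_is_dom_b}, the direction $H$ $G$-normalized $\Rightarrow \chi_H \in \mathfrak X(G)$ is immediate from the definitions, since $G$ then acts on $\Lie(H)$ by a character extending~$\chi_H$. Conversely, assume $\chi_H \in \mathfrak X(G)$. Then the simple $G$-module of highest weight~$\chi_H$ is one-dimensional (it is the character~$\chi_H$ itself). In the decomposition of $M$ from the previous paragraph, every simple summand with highest weight~$\chi_H$ is therefore one-dimensional, and coincides with its highest-weight line. Since the $U$-invariant weight-$\chi_H$ subspace of~$M$ is the direct sum of these lines, $\partial$ lies in a direct sum of $G$-stable lines on each of which $G$ acts by~$\chi_H$, and consequently $G \cdot \partial \subseteq \KK\partial$. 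Thus $G$ normalizes $\Lie(H)$ and hence~$H$ itself. The main obstacle is the rationality of the $G$-action on $H^0(X, T_X)$ in full generality; once granted, both parts follow quickly from standard highest-weight theory.
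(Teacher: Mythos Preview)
Your proof is correct and follows essentially the same route as the paper: view the infinitesimal generator as a $B$-semiinvariant global section of the tangent sheaf, use that this sheaf is coherent and $G$-linearized so its global sections form a rational $G$-module (the paper cites \cite[Thm.~C.3]{Tim} for exactly this), and then deduce both parts from highest-weight theory. The reference in~\cite{Tim} resolves your stated ``main obstacle'' about rationality of the $G$-action on $H^0(X,T_X)$ in full generality.
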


\begin{proof}
(\ref{prop_weight_is_dom_a})
Let $\xi$ be the vector field on $X$ induced by the action of~$H$.
Then $\xi$ is a $B$-semiinvariant global section of the tangent sheaf of~$X$.
Since the latter sheaf is coherent and $G$-linearized, its space of global sections is a rational $G$-module; see \cite[Thm.~C.3]{Tim}.
Thus $\xi$ is a highest-weight vector and $\chi_H \in \Lambda^+$.

(\ref{prop_weight_is_dom_b})
The field $\xi$ is $G$-normalized if and only if it generates a one-dimensional $G$-submodule in the space of global sections of the tangent sheaf of~$X$.
The latter condition is equivalent to $\chi_H \in \mathfrak X(G)$.
\end{proof}

\subsection{Vertical and horizontal \texorpdfstring{$B$}{B}-root subgroups}

Starting from this subsection, $X$ is again a spherical $G$-variety.

\begin{definition}
A $B$-root subgroup $H$ on $X$ is said to be \textit{vertical} if $H$ preserves the open $B$-orbit in $X$ and \textit{horizontal} otherwise.
\end{definition}

Let $H$ be a $B$-root subgroup on~$X$.
It follows from the definition that if $H$ is vertical then $HD = D$ for all~$D \in \mathcal D^B$.
On the other hand, if $H$ is horizontal then by Proposition~\ref{prop_moved_divisor} there is exactly one prime divisor $D \in \mathcal D^B$ moved by~$H$.

\begin{proposition}[{\cite[Proposition~1]{AZ}}] \label{prop_moved_types}
Suppose $H$ is horizontal and $D \in \mathcal D^B$ is moved by~$H$.
Then either $D \in \mathcal D^G$ or $D$ is a color of type~\textup($T$\textup).
\end{proposition}

Following \cite{AZ}, in the situation of the above proposition we say that $H$ is \textit{toroidal} (or of toroidal type) if $D \in \mathcal D^G$ and \textit{blurring} (or of blurring type) if $D$ is a color of type~$(T)$.

\subsection{Weights of horizontal \texorpdfstring{$B$}{B}-root subgroups}

Let $H$ be a horizontal $B$-root subgroup of weight~$\mu$ on~$X$ and let $D \in \mathcal D^B$ be moved by~$H$.
Let $\mathcal F = \mathcal D$ if $D \in \mathcal D^G$ or $\mathcal F = \mathcal D \setminus \lbrace D \rbrace$ if $D$ is a color of type~$(T)$.
Recall from Proposition~\ref{prop_primitive_in_lattice} that the element $\varkappa(D)$ is primitive in the lattice~$N$.
Apply Theorem~\ref{thm_lst} and retain all the notation used in that theorem and in~\S\,\ref{subsec_lst}.

Consider the natural projection $X_{\mathcal F} \simeq P_u \times Z \to Z$.
Since $H$ is $P_u$-invariant, it induces a $T$-normalized $\GG_a$-action on~$Z$.
As $D \cap Z \ne \varnothing$, this action is nontrivial and hence we get $T$-root subgroup $H_Z$ on $Z$ of the same weight~$\mu$.
Now Theorem~\ref{thm_T-root_subgroups} and Proposition~\ref{prop_TRS_prop}(\ref{prop_TRS_prop_b}) yields the following result.

\begin{proposition} \label{prop_hor_Dem_root}
Under the above assumptions, $\mu \in \mathfrak R_{\varkappa(D)}(\mathfrak F(Z))$.
In particular, $\mu \in M$.
\end{proposition}

\begin{proposition} \label{prop_pairing_with_mu}
The following assertions hold.
\begin{enumerate}[label=\textup{(\alph*)},ref=\textup{\alph*}]
\item \label{prop_pairing_with_mu_a}
$\langle \varkappa(D), \mu\rangle = -1$.
\item \label{prop_pairing_with_mu_b}
$\langle \varkappa(D'), \mu \rangle \ge 0$ for all $D' \in \mathcal D^B \setminus \lbrace D \rbrace$.
\end{enumerate}
\end{proposition}

\begin{proof}
(\ref{prop_pairing_with_mu_a})
This follows directly from Proposition~\ref{prop_hor_Dem_root}.

(\ref{prop_pairing_with_mu_b})
Put $\rho = \varkappa(D)$ and $\rho' = \varkappa(D')$ for short.
If $\rho' = -\rho$, then the assertion is obvious.
In what follows we assume that $\rho'$ and $\rho$ are not proportional.
Then there exists a weight $\nu \in M$ such that $\langle \rho, \nu \rangle = k > 0$ and $\langle \rho', \nu \rangle = 0$.
Clearly, $\mathrm{ord}_{D'} (f_\nu) = \langle \rho', \nu \rangle = 0$.
Fix an isomorphism $\GG_a \xrightarrow{\sim} H$, $s \mapsto H(s)$.
By Proposition~\ref{prop_TRS_prop}(\ref{prop_TRS_prop_c}), $H_Z$ (and hence~$H$) acts on the functions $f_\lambda$ with $\lambda \in M$ by formula~(\ref{fla_action}).
Since the divisor $D'$ is $H$-stable, for all $s \in \KK$ we have $\mathrm{ord}_{D'} (H(s)\cdot f_\nu) = 0$.
Now assume that $\langle \rho, \mu \rangle < 0$.
Then for all $s \ne 0$ we have $\mathrm{ord}_{D} (H(s)\cdot f_\nu) = k \cdot \mathrm{ord}_{D'} (1+csf_\mu) = k\langle \rho,\mu\rangle < 0$, a contradiction.
\end{proof}

\subsection{Local description of \texorpdfstring{$B$}{B}-root subgroups}
\label{subsec_local_descr}

Let $X$ be a spherical $G$-variety and let $H$ be a $B$-root subgroup on~$X$.
Put $\mathcal F_H = \lbrace D \in \mathcal D \mid HD = D\rbrace$.
Recall that $\mathcal F_H = \mathcal D$ in the case of vertical or toroidal $H$ and $\mathcal F_H = \mathcal D \setminus \lbrace D_0 \rbrace$ in the case of blurring $H$ moving a color $D_0$ of type~$(T)$.
Then $H$ preserves the open subset $X_{\mathcal F_H} \subset X$.

Now let $\mathcal F = \mathcal D$ or $\mathcal F = \mathcal D \setminus \lbrace D_0 \rbrace$ with $D_0$ being a color of type~$(T)$.
We shall describe all $B$-root subgroups on~$X_{\mathcal F}$.

Apply Theorem~\ref{thm_lst} and retain all the notation used in that theorem and in \S\,\ref{subsec_lst}.
Recall that there is a $P$-equivariant isomorphism $X_{\mathcal F} \simeq P_u \times Z$ where $Z$ is a toric $T$-variety whose fan is described in Proposition~\ref{prop_fan_of_Z}.
Let $\Gamma_Z \subset M$ be the weight monoid of~$Z$ and fix an arbitrary point $z_0 \in Z_0$.
We shall also assume that $f_\lambda(z_0) = 1$ for all $\lambda \in M$.

Consider the adjoint representation of the group $L$ on the space $\mathfrak p_u = \mathrm{Lie}(P_u)$ and decompose $\mathfrak p_u$ into a direct sum of irreducible $L$-invariant subspaces.
It is well known (see~\cite[Thm.~1.9]{Kos} or \cite[Ch.~3, Lemma~3.9]{GOV}) that all summands in this decomposition are pairwise non-isomorphic as $L$-modules; let $\Omega \subset \Delta$ be the set of highest weights of these summands with respect to the Borel subgroup $B \cap L \subset L$.
For each $\alpha \in \Omega$ fix a nonzero vector $e_\alpha \in \mathfrak p_u$ of weight~$\alpha$ and let $\varepsilon_\alpha$ be the vector field on $P_u$ determined by the action of the group~$\lbrace \exp(te_\alpha) \mid t \in \KK \rbrace$ on the right.
We naturally extend this vector field to $P_u \times Z$.

For each character $\mu \in \mathfrak X(T)$ put
\begin{equation}
\Omega_\mu = \lbrace \alpha \in \Omega \mid \left.\mu\right|_{T_0} = \left.\alpha\right|_{T_0} \rbrace; \quad \Omega_\mu^0 : = \lbrace \alpha \in \Omega_\mu \mid \mu - \alpha \in \Gamma_Z \rbrace.
\end{equation}
Note that the condition
$\left.\mu\right|_{T_0} = \left.\alpha\right|_{T_0}$ is equivalent to $\mu - \alpha \in M$.

Below by a $\GG_a$-integrable vector field we mean a vector field induced by a $\GG_a$-action.

\begin{theorem} \label{thm_LNDs_on_X0}
Given $\mu \in \mathfrak X(T)$, every $B$-normalized $\GG_a$-integrable vector field of weight $\mu$ on $P_u \times Z$ has the form
\begin{equation} \label{eqn_LNDs_on_X0}
\sum_{\alpha \in \Omega_\mu^0} c_\alpha f_{\mu - \alpha} \varepsilon_\alpha + \xi_Z
\end{equation}
where $c_\alpha \in \KK$ and $\xi_Z$ is a $T$-normalized $\GG_a$-integrable vector field of weight $\mu$ on~$Z$ extended naturally to~$P_u \times Z$.
Conversely, every vector field on $P_u \times Z$ of the above form is $B$-normalized of weight~$\mu$ and $\GG_a$-integrable.
\end{theorem}

\begin{proof}
Suppose $\xi$ is a $B$-normalized $\GG_a$-integrable vector field of weight~$\mu$ on $P_u \times Z$ and let $H$ be the corresponding $B$-root subgroup.
Since $\xi$ is $P_u$-invariant, the natural projection $P_u \times Z \to Z$ induces a well-defined pushforward $\xi_Z$ of $\xi$ to~$Z$.
In what follows we naturally extend $\xi_Z$ to $P_u \times Z$.
Consider the vector field $\xi - \xi_Z$ on $P_u \times Z$.
Since $B$ acts transitively on $P_u \times Z_0$, $\xi-\xi_Z$ is uniquely determined by its value $v$ at the point $(e,z_0)$ where $e \in P_u$ is the identity element.
Note that the tangent space to $P_u \times Z_0$ at $(e,z_0)$ is naturally identified with $\mathfrak p_u \oplus T_{z_0}Z_0$.
Since the pushforward of $\xi - \xi_Z$ to~$Z$ is trivial, it follows that $v$ is a $B \cap L_0$-semiinvariant vector in~$\mathfrak p_u$ of weight $\left.\mu\right|_{T_0}$, therefore $v = \sum\limits_{\alpha \in \Omega_\mu} c_\alpha e_\alpha$ for some $c_\alpha \in \KK$.
On the other hand, observe that the vector field $\sum\limits_{\alpha \in \Omega_\mu} c_\alpha f_{\mu - \alpha}\varepsilon_\alpha$ on $P_u \times Z_0$ is also $B$-semiinvariant of weight~$\mu$ and corresponds to the same tangent vector at $(e,z_0)$, hence it coincides with~$\xi - \xi_Z$.
Next, since this vector field extends to $P_u \times Z$, the condition $c_\alpha = 0$ should hold for all $\alpha \in \Omega_\mu$ with $\mu - \alpha \notin \Gamma_Z$, which proves the first claim.

Now suppose $\xi$ is a vector field on $P_u \times Z$ of the form~(\ref{eqn_LNDs_on_X0}).
Then $\xi$ is automatically $B$-normalized of weight~$\mu$, and it remains to prove that $\xi$ is $\GG_a$-integrable.
If $\xi_Z = 0$ then, by \cite[Thm.~3]{AZ}, $\xi$ is $\GG_a$-integrable on any subset of the form $P_u \times Z'$ where $Z' \subset Z$ is an affine open $T$-stable subset, hence $\xi$ is $\GG_a$-integrable on the whole $P_u \times Z$.
In what follows we assume that $\xi_Z \ne 0$.
Then $\mu \in \mathfrak R(\mathfrak F(Z))$.
Let $\rho_\mu \in \mathfrak F^1(Z)$ be the element such that $\langle \rho_\mu, \mu \rangle = -1$.
For every $\mathcal C \in \mathfrak F(Z)$, let $Z_{\mathcal C} \subset Z$ be the corresponding $T$-stable affine open subset.
For the cone $\mathcal C_0 = \QQ_{\ge0}\rho_\mu$, we know from \cite[Thm.~3]{AZ} that $\xi$ integrates to a $\GG_a$-action on $P_u \times Z_{\mathcal C_0}$.
Then we get a $\GG_a$-action on the field $\KK(P_u \times Z)$.
Clearly, $\xi$ (regarded as a derivation) preserves the algebra $\KK[Z_{\mathcal C_0}]$, hence by formula~(\ref{eqn_dmu_int}) there is a constant $c \in \KK^\times$ such that the $\GG_a$-action on $f_\lambda$ is given by the formula
\begin{equation} \label{fla_action2}
(s, f_\lambda) \mapsto f_\lambda(1 + scf_\mu)^{\langle \rho_\mu, \lambda \rangle}
\end{equation}
for all $s \in \KK$ and $\lambda \in M$.
Next, $\xi$ preserves the algebra $\KK[P_u] \otimes \KK[Z]$, hence so does the $\GG_a$-action and we get the natural algebra homomorphism
\begin{equation} \label{eqn_algebra_hom}
\KK[P_u] \otimes \KK[Z] \to \KK[\AA^1] \otimes \KK[P_u] \otimes \KK[Z].
\end{equation}

We now take an arbitrary cone $\mathcal C \in \mathfrak F$ and show that the above $\GG_a$-action extends to $P_u \times Z_{\mathcal C}$.
More precisely, we shall show that the $\GG_a$-action extends to a morphism $\AA^1 \times P_u \times Z_{\mathcal C} \to P_u \times Z$.

Case 1: $\rho_\mu \in \mathcal C^1$.
Then $\xi$ integrates to a $\GG_a$-action on $P_u \times Z_{\mathcal C}$ again by \cite[Thm.~3]{AZ}.

Case 2: $\rho_\mu \notin \mathcal C^1$.
Note that $\langle \rho, \mu \rangle \ge 0$ for all $\rho \in \mathcal C^1$.
Let $\mathcal K$ be the cone generated by the set $\lbrace \rho \in \mathcal C^1 \mid \langle \rho, \mu \rangle = 0 \rbrace$.
Then $\mathcal K$ is a face of~$\mathcal C$.
Let $\mathcal B$ be the cone spanned by $\mathcal K$ and~$\rho_\mu$.
Since $\mu$ is a Demazure root of~$\mathfrak F(Z)$, one has $\mathcal B \in \mathfrak F(Z)$.
Consider the functions $g = 1+scf_\mu$ and $h = f_\mu$ on $\AA^1 \times P_u \times Z_{\mathcal C}$ ($s$ is regarded as a coordinate function on~$\AA^1$) and let $(\AA^1 \times P_u \times Z_{\mathcal C})_g$ and $(\AA^1 \times P_u \times Z_{\mathcal C})_h$ be the corresponding principal open subsets.
Now the restriction of the homomorphism in~(\ref{eqn_algebra_hom}) to $\KK[P_u] \otimes \KK$ along with the formula
\[f_\lambda \mapsto f_\lambda(1+sf_\mu)^{\langle \rho_\mu, \lambda \rangle} = f_\lambda g^{\langle \rho_\mu, \lambda \rangle}
\]
arising from~(\ref{fla_action2}) defines algebra homomorphisms
\[
\KK[P_u \times Z_{\mathcal C}] \to \KK[(\AA^1 \times P_u \times Z_{\mathcal C})_g] = \KK[\AA^1 \times P_u \times Z_{\mathcal C}][g^{-1}]
\]
and
\[
\KK[P_u \times Z_{\mathcal B}] \to \KK[(\AA^1 \times P_u \times Z_{\mathcal C})_h] = \KK[\AA^1 \times P_u \times Z_{\mathcal C}][h^{-1}]
\]
(in the second case, $f_\lambda \cdot f_\mu^N$ is regular on $Z_{\mathcal C}$ for a sufficiently large power~$N$), which in turn define morphisms $(\AA^1 \times P_u \times Z_{\mathcal C})_g \to P_u \times Z_{\mathcal C}$ and $(\AA^1 \times P_u \times Z_{\mathcal C})_h \to P_u \times Z_{\mathcal B}$.
Since $g - sh = 1$, we have $(\AA^1 \times P_u \times Z_{\mathcal C})_g \cup (\AA^1 \times P_u \times Z_{\mathcal C})_h  = \AA^1 \times P_u \times Z_{\mathcal C}$ and thus the two morphisms in fact glue together to a morphism $\AA^1 \times P_u \times Z_{\mathcal C} \to P_u \times (Z_{\mathcal C} \cup Z_{\mathcal B})$, which extends the $\GG_a$-action on $P_u \times Z_{\mathcal C_\mu}$.
\end{proof}

\begin{corollary}
For every $\mu \in \mathfrak X(T)$, all $B$-normalized $\GG_a$-integrable vector fields of weight $\mu$ on $P_u \times Z$ form a vector space of dimension~$|\Omega_\mu^0| + \delta(\mu)$, where $\delta(\mu) = 1$ if $\mu \in \mathfrak R(\mathfrak F(Z))$ and $\delta(\mu) = 0$ otherwise.
\end{corollary}

\begin{remark} \label{rem_horizontal}
It follows from the above proof that a $B$-root subgroup on $X$ corresponding to the vector field~(\ref{eqn_LNDs_on_X0}) on~$X_{\mathcal F}$ is horizontal if and only if $\xi_Z \ne 0$.
\end{remark}

\section{Standard \texorpdfstring{$B$}{B}-root subgroups in the horospherical case}
\label{sec_hor_stand}

\subsection{The affine case}
\label{ssec_stand_aff}

Let $X$ be an affine horospherical $G$-variety with weight monoid~$\Gamma$.
For every $\lambda \in \Gamma$ let $\KK[X]_\lambda'$ be the $T$-stable complement of $\KK f_\lambda$ in $\KK[X]_\lambda$, so that $\KK[X]_\lambda = \KK f_\lambda \oplus \KK[X]_\lambda'$.
Then $I = \bigoplus \limits_{\lambda \in \Gamma} \KK[X]_\lambda'$ is a $U^-$-stable ideal in $\KK[X]$.
Let $\widetilde Z \subset X$ be the closed subvariety corresponding to~$I$.
Since $U^-$ acts trivially on $\KK[X]/I$, the variety $\widetilde Z$ consists of $U^-$-fixed points.
It follows that $Z = \widetilde Z \cap X_{\mathcal D}$.

Let $\mathcal E \subset N_\QQ$ be the cone dual to~$\QQ_{\ge0}\Gamma$.
By Proposition~\ref{prop_hor_aff_crit}, $X$ is simple with colored cone of the form $(\mathcal E, \mathcal D)$.

Take any $\mu \in \mathfrak R(\mathcal E) \cap \Lambda^+$ and let $\rho_\mu \in \mathcal E^1$ be the element with $\langle \rho_\mu, \mu \rangle = -1$.
It was proved in~\cite[\S\,6.1]{AA} that the map $\partial_\mu \colon \KK[X] \to \KK[X]$ given by the formula $\partial_\mu(g) = \langle \rho, \lambda \rangle f_\mu g$ for all $\lambda \in \Gamma$ and $g \in \KK[X]_\lambda$ is a $B$-normalized LND of weight~$\mu$.
The corresponding $B$-root subgroup on~$X$ is said to be \textit{standard}.

\begin{proposition} \label{prop_stand_char}
For a $B$-root subgroup~$H$ on~$X$, the following conditions are equivalent.
\begin{enumerate}[label=\textup{(\arabic*)},ref=\textup{\arabic*}]
\item \label{prop_stand_char_1}
$H$ is standard.
\item \label{prop_stand_char_2}
The varieties~$\widetilde Z$ and~$Z$ are preserved by~$H$.
\end{enumerate}
\end{proposition}

\begin{proof}
(\ref{prop_stand_char_1}) $\Rightarrow$ (\ref{prop_stand_char_2})
This follows from the fact that the corresponding LND $\partial$ preserves the ideal $I \subset \KK[X]$.

(\ref{prop_stand_char_2}) $\Rightarrow$ (\ref{prop_stand_char_1})
Since $H$ preserves~$Z$, it is horizontal, and thus its weight $\mu$ belongs to $\mathfrak R(\mathcal E) \cap \Lambda^+$.
Then there exists a standard $B$-root subgroup $H'$ on~$X$ of weight~$\mu$, and it preserves $\widetilde Z$ and~$Z$.
The vector fields of~$H$ and~$H'$ on~$Z$ are necessarily proportional, hence $H = H'$.
\end{proof}

Let $\mu$ and $\rho_\mu$ be as above and let $\mathcal L_\mu$ denote the simple $G$-module in $\Der \KK[X]$ generated by~$\partial_\mu$.
All derivations in $\mathcal L_\mu$ (which are not necessarily $B$-semi-invariant in general) admit a simple description as follows.
Let $O$ be the open $G$-orbit in~$X$; then $O$ is a quasi-affine horospherical homogeneous space.
Let $\Gamma_O$ denote the weight monoid of~$O$ and consider the $G$-module decomposition $\KK[O] = \bigoplus \limits_{\lambda \in \Gamma_O} \KK[O]_\lambda$.
Observe that $\mu \in \Gamma_O$ and $f_\mu \in \KK[O]_\mu$.
There exists a unique isomorphism $\KK[O]_\mu \xrightarrow{\sim} \mathcal L_\mu$ sending $f_\mu$ to~$\partial_\mu$.
Under this isomorphism, a function $f \in \KK[O]_\mu$ corresponds to the derivation $\partial_f \in \Der \KK[X]$ such that for all $\lambda \in \Gamma$ and $g \in \KK[X]_\lambda$ one has $\partial_f(g) = \langle \rho_\mu, \lambda \rangle f g$.
We note that the derivation $\partial_f$ is locally nilpotent.

By \cite[Theorem~8]{VP72}, $G$-orbits in~$X$ are in bijection with faces of~$\mathcal E$.
For every face $\mathcal C$ of~$\mathcal E$, let $O_{\mathcal C}$ be the corresponding $G$-orbit in~$X$, $\overline O_{\mathcal C}$ its closure in~$X$, and $\Gamma_{\mathcal C}$ the intersection of $\Gamma$ with the face of $\QQ_{\ge0} \Gamma$ dual to~$\mathcal C$.
Then the ideal in $\KK[X]$ defining $\overline O_{\mathcal C}$ is $I_{\mathcal C} = \bigoplus \limits_{\lambda \in \Gamma \setminus \Gamma_{\mathcal C}} \KK[X]_\lambda$.
If $\mathcal C = \QQ_{\ge0} \rho$ for some $\rho \in \mathcal E^1$ then we shall write $O_\rho, \overline O_\rho$, $\Gamma_\rho$, $I_\rho$ instead of $O_{\mathcal C}$, $\overline O_{\mathcal C}$, $\Gamma_{\mathcal C}$, $I_{\mathcal C}$, respectively.

Take any $f \in \KK[O]_\mu$ and let $H$ be the $\GG_a$-subgroup on~$X$ corresponding to the LND~$\partial_f$.
Propositions~\ref{prop_stable_oc_G} and~\ref{prop_stable_orbits_G} below are particular cases of Propositions~\ref{prop_stable_ideals} and~\ref{prop_stable_subvar}, respectively.

\begin{proposition} \label{prop_stable_oc_G}
Given a face $\mathcal C'$ of~$\mathcal C$, the following assertions hold.
\begin{enumerate}[label=\textup{(\alph*)},ref=\textup{\alph*}]
\item \label{prop_stable_oc_G_a}
If there is $\rho \in \mathcal C'^1$ such that $\langle \rho, \mu \rangle > 0$, then $\overline O_{\mathcal C'}$ is pointwise fixed by~$H$.
\item \label{prop_stable_oc_G_b}
If $\langle \mathcal C', \mu \rangle = 0$, then $\overline O_{\mathcal C'}$ is $H$-stable with nontrivial $T$-action.
\item \label{prop_stable_oc_G_c}
If $\langle \mathcal C', \mu \rangle \le 0$ and $\rho_\mu \in \mathcal C'^1$, then $\overline O_{\mathcal C'}$ is $H$-unstable.
\end{enumerate}
\end{proposition}

\begin{proposition} \label{prop_stable_orbits_G}
Given a face $\mathcal C'$ of~$\mathcal C$, the following assertions hold.
\begin{enumerate}[label=\textup{(\alph*)},ref=\textup{\alph*}]
\item \label{prop_stable_orbits_G_a}
If there is $\rho \in \mathcal C'^1$ such that $\langle \rho, \mu \rangle > 0$, then $O_{\mathcal C'}$ is pointwise fixed by~$H$.

\item \label{prop_stable_orbits_G_b}
If $\langle \mathcal C', \mu \rangle \le 0$, then there exist faces $\mathcal K, \widetilde{\mathcal K}$ of $\mathcal C$ such that $\langle \mathcal K, \mu \rangle = 0$, $\widetilde{\mathcal K}$ is generated by $\mathcal K$ and~$\rho_\mu$, $\mathcal C' \in \lbrace \mathcal K, \widetilde{\mathcal K}\rbrace$, and $HO_{\mathcal C'} = O_{\mathcal K} \cup O_{\widetilde{\mathcal K}}$.
\end{enumerate}
\end{proposition}

\subsection{Quotient realization of a simple horospherical variety}
\label{subsec_simple_horosph}

In this subsection, we describe a construction that will be useful later.

Let $X$ be a simple horospherical $G$-variety with colored cone $(\mathcal C, \mathcal F)$ and let $\mu \in \mathfrak R(\mathcal C) \cap \Lambda^+$.

Since $X$ is normal, we know that $X$ is quasiprojective and hence can be realized as a locally closed $G$-stable subvariety in the projectivization $\PP(V)$ of a finite-dimensional $G$-module~$V$.
Let $\varphi \colon V \setminus \lbrace 0 \rbrace \to \PP(V)$ be the canonical projection.
Consider the group $\widetilde G = G \times C$ where $C \simeq \KK^\times$ acts on~$V$ via scalar transformations.
Put also $\widetilde B = B \times C$ and $\widetilde T = T \times C$, so that $\widetilde B$ is a Borel subgroup of $\widetilde G$ and $\widetilde T$ is a maximal torus in~$\widetilde B$.
Put $\widetilde X = \varphi^{-1}(X)$; this is a quasiaffine spherical (and even horospherical) $\widetilde G$-variety.
By~\cite[Thm.]{Kn93}, the algebra $\KK[\widetilde X]$ is finitely generated, so $\widehat X = \Spec \KK[\widetilde X]$ is an affine spherical $\widetilde G$-variety with $\KK[\widehat X] = \KK[\widetilde X]$.
Then we have a natural $\widetilde G$-equivariant embedding $\widetilde X \hookrightarrow \widehat X$ with boundary of codimension $\ge2$.

Let $\widetilde M, \widetilde N, \widetilde \varkappa, \ldots$ be the corresponding objects relative to~$\widetilde X$ (and also to~$\widehat X$).
Then the map $\varphi$ induces a natural inclusion $M \hookrightarrow \widetilde M$ and the corresponding surjective map $\widetilde N \to N$.
For every $D \in \mathcal D^B$, put $\widetilde D = \varphi^{-1}(D)$ and let $\widehat D$ be the closure of $\widetilde D$ in~$\widehat X$.
Then each $\widetilde D$ is a $\widetilde B$-stable prime divisor in~$\widetilde X$; moreover, $\widetilde D$ is a color of $\widetilde X$ if and only if $D$ is a color of~$X$.
Then for every $D \in \mathcal D^B$ and $\lambda \in M$ one has $\langle \widetilde \varkappa(\widetilde D), \lambda \rangle = \langle \varkappa(D), \lambda \rangle$.

Clearly, the map $\varphi$ induces a bijection between the $G$-orbits in $X$ and the $\widetilde G$-orbits in~$\widetilde X$; moreover, this bijection respects the inclusion of orbit closures.
It follows that $\widetilde X$ is a simple spherical $\widetilde G$-variety corresponding to the colored cone $(\widetilde{\mathcal C}, \widetilde{\mathcal F})$ where $\widetilde{\mathcal C} = \QQ_{\ge0} \lbrace \widetilde \varkappa(\widetilde D) \mid D \in \mathcal D^G \cup \mathcal F \rbrace$ and $\widetilde{\mathcal F} = \lbrace \widetilde D \mid D \in \mathcal F \rbrace$.

Since $\widehat X$ is affine and horospherical, it follows from Proposition~\ref{prop_hor_aff_crit} that $\widehat X$ is simple and its colored cone is of the form $(\widetilde{\mathcal E}, \widetilde{\mathcal D})$ where $\widetilde{\mathcal E}$ is the cone generated by the set $\lbrace \widetilde \varkappa(\widetilde D) \mid D \in \mathcal D^B \rbrace$.

Let $\widehat O$ be the open $\widetilde G$-orbit in~$\widehat X$ (and also in~$\widetilde X$).
For every $\lambda \in \Gamma(\widehat O)$ let $\KK[\widehat O]_{\lambda} \subset \KK[\widehat O]$ be the simple $\widetilde G$-submodule with highest weight~$\lambda$.

In view of Proposition~\ref{prop_colors_hor} one has $\mathfrak R(\mathcal C) \cap \Lambda^+ \subset \mathfrak R(\widetilde{\mathcal E})$.

Take any $\mu \in \mathfrak R(\mathcal C) \cap \Lambda^+$ and any nonzero function $f \in \KK[\widehat O]_\mu$.
Consider the LND $\partial_f$ on $\KK[\widehat X]$ as in \S\,\ref{ssec_stand_aff} and let $\widehat H$ be the $\GG_a$-subgroup on $\widetilde X$ corresponding to~$\partial_f$.

\begin{proposition} \label{prop_stable_subset}
The following assertions hold.
\begin{enumerate}[label=\textup{(\alph*)},ref=\textup{\alph*}]
\item \label{prop_stable_subset_a}
The subset $\widetilde X \subset \widehat X$ is $\widehat H$-stable.
\item \label{prop_stable_subset_b}
The action of $\widehat H$ on $\widetilde X$ descends to a $\GG_a$-subgroup $H$ on~$X$.
\end{enumerate}
\end{proposition}

\begin{proof}
(\ref{prop_stable_subset_a})
Put $\mathcal D^{\widetilde B}_0 = \lbrace \widetilde D \mid D \in \mathcal D \setminus \mathcal F \ \text{and} \ \QQ_{\ge0}\widetilde{\varkappa}(\widetilde D) \ \text{is a face of} \ \widetilde{\mathcal E} \rbrace$.
For every $\widetilde D \in \mathcal D^{\widetilde B}_0$, let $O_{\widetilde D}$ denote the $\widetilde G$-orbit in $\widehat X$ corresponding to the face $\QQ_{\ge0}\widetilde D$ of the cone~$\widetilde{\mathcal E}$ and let $\overline O_{\widetilde D}$ be the closure of $O_{\widetilde D}$ in $\widehat X$.
Then $\widetilde X = \widehat X \setminus \bigcup \limits_{\widetilde D \in \mathcal D^{\widetilde B}_0} \overline O_{\widetilde D}$.
By Proposition~\ref{prop_stable_oc_G}(\ref{prop_stable_oc_G_a},\,\ref{prop_stable_oc_G_b}), each subset $\overline O_{\widetilde D}$ is $H$-stable, hence so is~$\widetilde X$.

(\ref{prop_stable_subset_b})
Since $\lambda \in \Lambda^+$, it follows that $\widehat H$ commutes with~$C$, whence the claim.
\end{proof}

\subsection{The general case}

Let $X$ be an arbitrary horospherical $G$-variety (not necessarily affine).

\begin{definition}
A $B$-root subgroup on~$X$ is said to be \textit{standard} if it preserves the canonical section $Z \subset X_{\mathcal D}$.
\end{definition}

By Remark~\ref{rem_horizontal}, any standard $B$-root subgroup on~$X$ is automatically horizontal and uniquely determined by its weight.

\begin{proposition} \label{prop_hor_simple}
Suppose $X$ is simple with colored cone $(\mathcal C, \mathcal F)$ and $\mu \in \mathfrak X(T)$.
Then the following conditions are equivalent.
\begin{enumerate}[label=\textup{(\arabic*)},ref=\textup{\arabic*}]
\item \label{prop_hor_simple_1}
There exists a standard $B$-root subgroup on $X$ of weight~$\mu$.
\item \label{prop_hor_simple_2}
$\mu \in \mathfrak R(\mathcal C) \cap \Lambda^+$.
\end{enumerate}
\end{proposition}

\begin{proof}
(\ref{prop_hor_simple_1})$\Rightarrow$(\ref{prop_hor_simple_2})
Let $H$ be a standard $B$-root subgroup on~$X$ of weight~$\mu$.
Then~$\mu \in \Lambda^+$ by Proposition~\ref{prop_weight_is_dom}(\ref{prop_weight_is_dom_a}).
Since $H$ is horizontal, by Proposition~\ref{prop_moved_divisor} there is a unique prime divisor $D \in \mathcal D^B$ moved by~$H$.
Thanks to Proposition~\ref{prop_moved_types} and and the fact that $X$ has no colors of type~$(T)$, one actually has $D \in \mathcal D^G$.
Then $\mu \in \mathfrak R(\mathcal C)$ by Proposition~\ref{prop_pairing_with_mu}.

(\ref{prop_hor_simple_2})$\Rightarrow$(\ref{prop_hor_simple_1})
Retain all the notation of \S\,\ref{subsec_simple_horosph} and consider the standard $\widetilde B$-root subgroup on $\widehat X$ of weight~$\lambda$.
By Proposition~\ref{prop_stable_subset}, $\widehat H$ preserves $\widetilde X$ and descends to a $B$-root subgroup~$H$ on~$X$ of the same weight~$\mu$.
Clearly, the set of $U^-$-fixed points in $\widetilde X$ is $\varphi^{-1}(Z)$, hence $Z$ is $H$-stable and $H$ is standard.
\end{proof}

The next corollary is implied by Proposition~\ref{prop_stable_orbits_G}.

\begin{corollary} \label{cor_stable_orbits_G1}
Retain the hypotheses of Proposition~\textup{\ref{prop_hor_simple}}.
Given a face $\mathcal C'$ of~$\mathcal C$, the following assertions hold.
\begin{enumerate}[label=\textup{(\alph*)},ref=\textup{\alph*}]
\item \label{cor_stable_orbits_G1_a}
If there is $\rho \in \mathcal C'^1$ such that $\langle \rho, \mu \rangle > 0$, then $O_{\mathcal C'}$ is pointwise fixed by~$H$.

\item \label{cor_stable_orbits_G1_b}
If $\langle \mathcal C', \mu \rangle \le 0$, then there exist faces $\mathcal K, \widetilde{\mathcal K}$ of $\mathcal C$ such that $\langle \mathcal K, \mu \rangle = 0$, $\widetilde{\mathcal K}$ is generated by $\mathcal K$ and~$\rho_\mu$, $\mathcal C' \in \lbrace \mathcal K, \widetilde{\mathcal K}\rbrace$, and $HO_{\mathcal C'} = O_{\mathcal K} \cup O_{\widetilde{\mathcal K}}$.
\end{enumerate}
\end{corollary}

\begin{corollary}
Under the hypotheses of Proposition~\textup{\ref{prop_hor_simple}}, take any $D \in \mathcal D^G$.
The following conditions are equivalent.
\begin{enumerate}[label=\textup{(\arabic*)},ref=\textup{\arabic*}]
\item
There exists a $B$-root subgroup on~$X$ that moves~$D$.

\item
$\mathfrak R_{\varkappa(D)}(\mathcal C) \cap \Lambda^+ \ne \varnothing$.
\end{enumerate}
\end{corollary}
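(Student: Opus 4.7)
The plan is to derive both implications by combining Proposition~\ref{prop_hor_simple} with the structural results of Section~\ref{sec_basic_prop}, namely Propositions~\ref{prop_weight_is_dom},~\ref{prop_hor_Dem_root}, and~\ref{prop_pairing_with_mu}. A preliminary observation that will be used throughout is that a $B$-root subgroup moving any $G$-stable divisor must be horizontal, since vertical root subgroups fix every $B$-stable divisor, and moreover it must be of toroidal type: the horospherical hypothesis ensures that every color has type~$(U)$, ruling out blurring root subgroups.

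For the implication $(2) \Rightarrow (1)$, pick $\mu \in \mathfrak R_{\varkappa(D)}(\mathcal C) \cap \Lambda^+$ and apply Proposition~\ref{prop_hor_simple} to obtain a standard horizontal $B$-root subgroup~$H$ on~$X$ of weight~$\mu$. By the preliminary observation, the unique divisor $D_0$ moved by~$H$ lies in~$\mathcal D^G$, and Proposition~\ref{prop_hor_Dem_root} gives $\langle \varkappa(D_0), \mu \rangle = -1$. On the other hand, the assumption $\mu \in \mathfrak R_{\varkappa(D)}(\mathcal C)$ yields $\langle \varkappa(D), \mu \rangle = -1$ together with $\langle \rho', \mu \rangle \ge 0$ for every $\rho' \in \mathcal C^1 \setminus \lbrace \varkappa(D) \rbrace$. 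Since $\varkappa(D_0)$ and $\varkappa(D)$ both lie in $\mathcal C^1$ (via the proposition identifying $\varkappa(\mathcal D^G)$ inside $\mathfrak F^1$), the only way to reconcile these conditions is $\varkappa(D_0) = \varkappa(D)$; the injectivity of $\varkappa$ on $\mathcal D^G$ then forces $D_0 = D$.

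For the implication $(1) \Rightarrow (2)$, let $H$ be a $B$-root subgroup on $X$ moving $D$, and let $\mu$ denote its weight. By Proposition~\ref{prop_weight_is_dom}\,(\ref{prop_weight_is_dom_a}) we have $\mu \in \Lambda^+$. By the preliminary observation $H$ is horizontal of toroidal type, so Proposition~\ref{prop_hor_Dem_root} gives $\langle \varkappa(D), \mu \rangle = -1$, which is condition~(1) of the Demazure root definition for~$\mathcal C$. Condition~(3) holds automatically because $\mathcal C$ together with its faces is a single-cone fan. For condition~(2), observe that every $\rho' \in \mathcal C^1 \setminus \lbrace \varkappa(D) \rbrace$ is a positive rational multiple of $\varkappa(D')$ for some $D' \in \mathcal D^B \setminus \lbrace D \rbrace$, since $\mathcal C$ is generated by the $\varkappa$-images of the $B$-stable divisors containing the closed orbit; Proposition~\ref{prop_pairing_with_mu}\,(\ref{prop_pairing_with_mu_b}) then supplies $\langle \rho', \mu \rangle \ge 0$. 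Hence $\mu \in \mathfrak R_{\varkappa(D)}(\mathcal C) \cap \Lambda^+$.

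The argument is essentially a bookkeeping exercise linking earlier results, so no single step is technically difficult. The main point of care is matching the Demazure root condition~(2) for the full cone~$\mathcal C$ against the weaker condition furnished by Proposition~\ref{prop_hor_Dem_root} for the toric fan $\mathfrak F(Z)$, as well as ensuring that the ray $\varkappa(D_0)$ produced by the chosen root subgroup really lies in $\mathcal C^1$ rather than merely in $\mathfrak F^1(Z)$, so that the rigidity argument in $(2) \Rightarrow (1)$ goes through.
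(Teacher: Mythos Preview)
Your proof is correct and supplies exactly the details that the paper leaves implicit (the corollary is stated without proof). Both implications are handled by the natural combination of Proposition~\ref{prop_hor_simple} with Propositions~\ref{prop_weight_is_dom} and~\ref{prop_pairing_with_mu}, together with the identification $\mathfrak F^1 = \mathcal C^1$ valid in the simple horospherical case (since $\mathcal V = N_\QQ$ makes every ray of $\mathcal C$ a colored face).

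One minor remark: in the direction $(2)\Rightarrow(1)$ you could shortcut the identification $D_0 = D$ by observing that the standard $B$-root subgroup produced in Proposition~\ref{prop_hor_simple} descends from the affine LND $\partial_\mu(g) = \langle \rho,\lambda\rangle f_\mu g$ with $\rho = \varkappa(D)$, so that it visibly moves~$D$ by construction. Your route via Proposition~\ref{prop_pairing_with_mu} and the injectivity of~$\varkappa|_{\mathcal D^G}$ is equally valid and has the advantage of not unpacking the proof of Proposition~\ref{prop_hor_simple}. Also, your closing commentary slightly undersells your own argument: you do not in fact rely on Proposition~\ref{prop_hor_Dem_root} for condition~(2) of the Demazure-root definition, but rather on Proposition~\ref{prop_pairing_with_mu}(\ref{prop_pairing_with_mu_b}), which already covers all of~$\mathcal D^B$ and hence all rays of~$\mathcal C$.
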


We now turn to the case of arbitrary horospherical $X$ (not necessarily simple).
Let ${}^c\mathfrak F(X)$ be the colored fan of $X$ and
let $\mathfrak F(X)$ be the fan obtained from ${}^c\mathfrak F(X)$ by taking all cones \textup(without colors\textup).
Observe that $\mathfrak F(Z) \subset \mathfrak F(X)$ by Proposition~\ref{prop_fan_of_Z}(\ref{prop_fan_of_Z_a}).

\begin{lemma} \label{lemma_DR_hor}
Let $\mu \in \mathfrak R(\mathfrak F(X)) \cap \Lambda^+$.
\begin{enumerate}[label=\textup{(\alph*)},ref=\textup{\alph*}]
\item \label{lemma_DR_hor_a}
If a colored cone $(\mathcal K, \mathcal F) \in {}^c\mathfrak F(X)$ satisfies $\langle \mathcal K, \mu \rangle = 0$ and $\widetilde{\mathcal K}$ is the cone generated by $\mathcal K$ and~$\rho_\mu$, then $(\widetilde{\mathcal K}, \mathcal F) \in {}^c\mathfrak F(X)$.
\item \label{lemma_DR_hor_b}
$\mu \in \mathfrak R(\mathfrak F(Z)) \cap \Lambda^+$.
\end{enumerate}
\end{lemma}

\begin{proof}
(\ref{lemma_DR_hor_a})
Since $\mu \in \mathfrak R(\mathfrak F(X))$, it follows from~(\ref{DR3}) that $\widetilde{\mathcal K} \in \mathfrak F(X)$, hence $(\widetilde{\mathcal K}, \mathcal F') \in {}^c\mathfrak F(X)$ for some $\mathcal F' \subset \mathcal D$.
Then Lemma~\ref{lemma_faces2} implies $\mathcal F' = \mathcal F$.

(\ref{lemma_DR_hor_b})
It suffices to show that $\mu \in \mathfrak R(\mathfrak F(Z))$.
Let $D_\mu \in \mathcal D^B$ be the divisor corresponding to~$\rho_\mu \in \mathfrak F^1(X)$.
By Proposition~\ref{prop_colors_hor}, one actually has $D_\mu \in \mathcal D^G$, whence~(\ref{DR1}).
As $\mathfrak F^1(Z) \subset \mathfrak F^1(X)$, property~(\ref{DR2}) also holds.
Let $\mathcal K \in \mathfrak F(Z)$ be a cone such that $\langle \mathcal K, \mu \rangle = 0$ and let $\widetilde{\mathcal K}$ be the cone generated by $\mathcal K$ and~$\rho_\mu$.
Since $(\mathcal K, \varnothing) \in \mathfrak F^c(X)$, one has $(\widetilde{\mathcal K}, \varnothing) \in {}^c\mathfrak F(X)$ by part~(\ref{lemma_DR_hor_a}), hence $\widetilde{\mathcal K} \in \mathfrak F(Z)$ and we get~(\ref{DR3}).
\end{proof}

\begin{proposition} \label{prop_standard_gen}
For a weight $\mu \in \mathfrak X(T)$, the following conditions are equivalent.
\begin{enumerate}[label=\textup{(\arabic*)},ref=\textup{\arabic*}]
\item \label{prop_standard_gen_1}
There exists a standard $B$-root subgroup on~$X$ of weight~$\mu$.
\item \label{prop_standard_gen_2}
$\mu \in \mathfrak R(\mathfrak F(X)) \cap \Lambda^+$.
\end{enumerate}
\end{proposition}

\begin{proof}
(\ref{prop_standard_gen_1}) $\Rightarrow$ (\ref{prop_standard_gen_2})
By Proposition~\ref{prop_weight_is_dom}(\ref{prop_weight_is_dom_a}), it suffices to prove that $\mu \in \mathfrak R(\mathfrak F(X))$.
Properties~(\ref{DR1}) and~(\ref{DR2}) hold by Proposition~\ref{prop_pairing_with_mu}.
Let a colored cone $(\mathcal K, \mathcal F) \in {}^c\mathfrak F(X)$ be such that $\langle \mathcal K, \mu \rangle = 0$ and let $\widetilde{\mathcal K}$ be the cone generated by~$\mathcal K$ and~$\rho_\mu$.
By Lemma~\ref{lemma_add_cone2}, the collection ${}^c \widetilde{\mathfrak F}(X) = {}^c \mathfrak F(X) \cup \lbrace\text{all faces of} \ (\widetilde{\mathcal K}, \mathcal F) \rbrace$ is a (strictly convex) colored fan in~$N_\QQ$.
Let $\widetilde X \supset X$ be the spherical $G$-variety corresponding to~${}^c \widetilde{\mathfrak F}(X)$ and let $X_0 \subset \widetilde X$ be the simple spherical subvariety corresponding to the colored cone~$(\widetilde{\mathcal K}, \mathcal F)$.
By Proposition~\ref{prop_hor_simple}, the action of~$H$ can be extended to~$X_0$ (and hence to the whole~$\widetilde X$).
By Corollary~\ref{cor_stable_orbits_G1}(\ref{cor_stable_orbits_G1_b}), the $G$-orbits in~$X_0$ corresponding to $(\mathcal K, \mathcal F)$ and $(\widetilde{\mathcal K}, \mathcal F)$ are connected by~$H$, therefore $(\widetilde{\mathcal K}, \mathcal F) \in {}^c \mathfrak F(X)$ and hence $\widetilde{\mathcal K} \in \mathfrak F(X)$, which proves~(\ref{DR3}).

(\ref{prop_standard_gen_2}) $\Rightarrow$ (\ref{prop_standard_gen_1})
By Lemma~\ref{lemma_DR_hor}(\ref{lemma_DR_hor_b}), one has $\mu \in \mathfrak R(\mathfrak F(Z))$.
Then there exists a $T$-root subgroup on $Z$ of weight~$\mu$, which trivially extends to a $B$-root subgroup $H$ on $X_{\mathcal D} \simeq P_u \times Z$.
Now let $\mathcal C \in \mathfrak F(X)$ be an arbitrary cone, let $(\mathcal C, \mathcal F) \in {}^c \mathfrak F(X)$ be the corresponding colored cone, and let $X_{\mathcal C}$ denote the corresponding simple spherical subvariety in~$X$.

\textit{Case}~1: $\rho_\mu \in \mathcal C^1$.
Then by Proposition~\ref{prop_hor_simple} we know that $H$ extends to a standard $B$-root subgroup on~$X_{\mathcal C}$.

\textit{Case}~2: $\langle \mathcal C, \mu \rangle = 0$.
Let $\widetilde{\mathcal C}$ be the cone generated by $\mathcal C$ and~$\rho_\mu$.
By Lemma~\ref{lemma_DR_hor}(\ref{lemma_DR_hor_a}), one has $(\widetilde{\mathcal C}, \mathcal F) \in {}^c \mathfrak F(X)$.
Again by Proposition~\ref{prop_hor_simple} we know that $H$ extends to a standard $B$-root subgroup on~$X_{\widetilde{\mathcal C}}$.

\textit{Case}~3: $\langle \mathcal C, \mu \rangle \ge 0$ and there is $\rho \in \mathcal C^1$ such that $\langle \rho, \mu \rangle > 0$.
We claim that the vector field $\xi$ corresponding to~$H$ vanishes on~$O_{\mathcal C}$.
Let $\mathcal F_0 \subset \mathcal D$ be the set such that $(\QQ_{\ge0}\rho, \mathcal F_0) \in {}^c \mathfrak F(X)$.
Consider the open subset $X' \subset X$ corresponding to the colored fan $\lbrace (0, \varnothing), (\QQ_{\ge0}\rho_\mu, \varnothing), (\QQ_{\ge0}\rho, \mathcal F_0) \rbrace$.
Let $\mathcal C_0$ be the cone generated by $\rho_\mu$ and~$\rho$ and let $X_0$ be the simple spherical $G$-variety with colored cone~$(\mathcal C_0, \mathcal F_0)$.
Observe that $X_0 \supset X'$.
Then $\xi$ extends to $X_0$ and integrates there to a $B$-root subgroup~$H_0$.
For this action, by Corollary~\ref{cor_stable_orbits_G1}(\ref{cor_stable_orbits_G1_b}), $O_{\QQ_{\ge0}\rho}$ consists of $H_0$-fixed points, hence $\xi$ vanishes on~$O_{\QQ_{\ge0}\rho}$, hence $\xi$ vanishes on~$O_{\mathcal C}$.
By Proposition~\ref{prop_LV_ext2}, $H$ extends to a trivial action on~$O_{\mathcal C}$.
\end{proof}

Let $H$ be a standard $B$-root subgroup on $X$ of weight~$\mu$.
The proof of Proposition~\ref{prop_standard_gen} implies the following result, which generalizes Proposition~\ref{prop_stable_orbits_G} and Corollary~\ref{cor_stable_orbits_G1}.

\begin{proposition} \label{prop_stable_orbits_hor}
Given a cone $\mathcal E \in \mathfrak F(X)$, the following assertions hold.
\begin{enumerate}[label=\textup{(\alph*)},ref=\textup{\alph*}]
\item \label{prop_stable_orbits_hor_a}
If there is $\rho \in \mathcal E^1$ such that $\langle \rho, \mu \rangle > 0$, then $O_{\mathcal E}$ is pointwise fixed by~$H$.

\item \label{prop_stable_orbits_hor_b}
If $\langle \mathcal E, \mu \rangle \le 0$, then there exist cones $\mathcal K, \widetilde{\mathcal K} \in \mathfrak F$ such that $\langle \mathcal K, \mu \rangle = 0$, $\widetilde{\mathcal K}$ is generated by $\mathcal K$ and~$\rho_\mu$, $\mathcal E \in \lbrace \mathcal K, \widetilde{\mathcal K}\rbrace$, and $HO_{\mathcal E} = O_{\mathcal K} \cup O_{\widetilde{\mathcal K}}$.
\end{enumerate}
\end{proposition}

\begin{proposition} \label{prop_hor_horiz}
Suppose the fan $\mathfrak F(X)$ is convex \textup(which holds in particular when $X$ is affine, simple, or complete\textup).
For a weight $\mu \in \mathfrak X(T)$, the following conditions are equivalent.
\begin{enumerate}[label=\textup{(\arabic*)},ref=\textup{\arabic*}]
\item \label{prop_hor_horiz_1}
There exists a horizontal $B$-root subgroup on~$X$ of weight~$\mu$.

\item \label{prop_hor_horiz_2}
$\mu \in \mathfrak R(\mathfrak F(X)) \cap \Lambda^+$.
\end{enumerate}
\end{proposition}

\begin{proof}
(\ref{prop_hor_horiz_1})$\Rightarrow$(\ref{prop_hor_horiz_2})
Thanks to Proposition~\ref{prop_weight_is_dom}(\ref{prop_weight_is_dom_a}), one has $\mu \in \Lambda^+$.
By Proposition~\ref{prop_pairing_with_mu}, $\mu$ satisfies~(\ref{DR1}) and~(\ref{DR2}).
Property~(\ref{DR3}) follows from Corollary~\ref{crl_convex_DR3}.

(\ref{prop_hor_horiz_2})$\Rightarrow$(\ref{prop_hor_horiz_1})
By Proposition~\ref{prop_standard_gen}, there exists a standard $B$-root subgroup of weight~$\mu$.
\end{proof}

\begin{corollary}
Under the hypotheses of Proposition~\textup{\ref{prop_hor_horiz}}, take any $D \in \mathcal D^G$.
The following conditions are equivalent.
\begin{enumerate}[label=\textup{(\arabic*)},ref=\textup{\arabic*}]
\item
There exists a $B$-root subgroup on~$X$ that moves~$D$.

\item
$\mathfrak R_{\varkappa(D)}(\mathfrak F(Z)) \cap \Lambda^+ \ne \varnothing$.
\end{enumerate}
\end{corollary}

\subsection{\texorpdfstring{$G$}{G}-root subgroups}

Let $X$ be a horospherical $G$-variety.

\begin{proposition} \label{prop_G-root_subgroups}
Suppose $H$ is a $G$-root subgroup on~$X$ of weight~$\mu$.
Then $H$ is standard \textup(and hence horizontal\textup).
In particular, $H$ is uniquely determined by its weight among the $B$-root subgroups on~$X$.
\end{proposition}

\begin{proof}
Since $H$ commutes with~$U^-$, it preserves the canonical section~$Z$, hence $H$ is standard.
\end{proof}

Combining Propositions~\ref{prop_G-root_subgroups} and~\ref{prop_standard_gen} we obtain the next result.

\begin{proposition}
For a weight $\mu \in \mathfrak X(G)$, the following conditions are equivalent.
\begin{enumerate}[label=\textup{(\arabic*)},ref=\textup{\arabic*}]
\item
There exists a $G$-root subgroup on~$X$ of weight~$\mu$.

\item
$\mu \in \mathfrak R(\mathfrak F(X))$.
\end{enumerate}
\end{proposition}

\subsection{Commutation relations}
\label{ssec_comm_rel_stand}

Let $X$ be an arbitrary horospherical $G$-variety with open $G$-orbit~$O$.
For every standard $B$-root subgroup of weight~$\mu$ on~$X$, let $\xi_\mu$ be the corresponding $B$-semiinvariant vector field on~$X$ and let $\mathcal L_\mu$ be the simple $G$-submodule generated by~$\xi_{\mu}$.
In this subsection we compute commutation relations between the $G$-modules $\mathcal L_\mu$ under certain restrictions.
When $X$ is complete, this will yield commutation relations between all possible $\mathcal L_\mu$.

Recall from \S\,\ref{ssec_aut_group_sph} that the weight lattice $M$ is naturally identified with the group $\mathfrak X(K)$ where $K$ is the connected component of the identity of the group of equivariant automorphisms of~$O$.
Thus every element $\rho \in N$ corresponds to a one-parameter subgroup of~$K$, and we let $\nu_\rho$ denote the corresponding vector field on~$X$.

Let $\mu_1,\mu_2 \in \mathfrak R(\mathfrak F(X)) \cap \Lambda^+$.
Put $\rho_1 = \rho_{\mu_1}$ and $\rho_2 = \rho_{\mu_2}$ for short.

When $X$ is affine, we regard $\mathcal L_{\mu}$ as a $G$-submodule of $\Der \KK[X]$.
Recall that $\mathcal L_{\mu}$ is identified with $\KK[O]_\mu$.

\begin{proposition} \label{prop_comm_rel_aff}
Suppose $X$ is affine and the derivations $\partial_1 \in \mathcal L_{\mu_1}, \partial_2 \in \mathcal L_{\mu_2}$ are defined by functions $f_1 \in \KK[O]_{\mu_1}, f_2 \in \KK[O]_{\mu_2}$, respectively.
Then for every $\lambda \in \Gamma$ and $g \in \KK[X]_\lambda$ one has $[\partial_1,\partial_2](g) = (\langle \rho_1, \mu_2 \rangle \langle \rho_2, \lambda \rangle -\langle \rho_2, \mu_1 \rangle \langle \rho_1, \lambda \rangle) f_1 f_2 g$.
\end{proposition}

\begin{proof}
The claim is implied by the computation
\begin{multline*}
[\partial_1,\partial_2](g) = \partial_1\partial_2(g) - \partial_2\partial_1(g) = \partial_1(\langle \rho_2, \lambda \rangle f_2 g) - \partial_2(\langle \rho_1, \lambda \rangle f_1 g) =\\
\langle \rho_1, \lambda + \mu_2 \rangle \langle \rho_2, \lambda \rangle f_1 f_2 g -\langle \rho_2, \lambda + \mu_1 \rangle \langle \rho_1, \lambda \rangle f_1 f_2 g = (\langle \rho_1, \mu_2 \rangle \langle \rho_2, \lambda \rangle -\langle \rho_2, \mu_1 \rangle \langle \rho_1, \lambda \rangle) f_1 f_2 g.
\end{multline*}
\end{proof}

\begin{proposition} \label{prop_comm_rel_stand}
The following assertions hold.
\begin{enumerate}[label=\textup{(\alph*)},ref=\textup{\alph*}]
\item \label{prop_comm_rel_stand_a}
If $\rho_1 = \rho_2$, then $[\mathcal L_{\mu_1}, \mathcal L_{\mu_2}] = 0$.
\item \label{prop_comm_rel_stand_b}
If $\rho_1 \ne \rho_2$ and $\langle \rho_2, \mu_1 \rangle = \langle \rho_1,\mu_1 \rangle = 0$, then $[\mathcal L_{\mu_1}, \mathcal L_{\mu_2}] = 0$.
\item \label{prop_comm_rel_stand_c}
If $\rho_1 \ne \rho_2$, $\langle \rho_2, \mu_1 \rangle = 0$, and $\langle \rho_1, \mu_2 \rangle > 0$, then $\mu_1+\mu_2 \in \mathfrak R_{\rho_2}(\mathfrak F(X))$, $[\mathcal L_{\mu_1}, \mathcal L_{\mu_2}] = \mathcal L_{\mu_1 + \mu_2}$, and $[\xi_{\mu_1},\xi_{\mu_2}] = \langle \rho_1, \mu_2 \rangle \xi_{\mu_1+\mu_2}$.
\item \label{prop_comm_rel_stand_d}
If $\mu_1 + \mu_2 = 0$, then $\mu_1,\mu_2 \in \mathfrak X(G)$, $\dim \mathcal L_{\mu_1} = \dim \mathcal L_{\mu_2} = 1$, and $[\xi_{\mu_1},\xi_{\mu_2}] = \nu_{\rho_1-\rho_2}$.
\end{enumerate}
\end{proposition}

\begin{proof}
In cases~(\ref{prop_comm_rel_stand_a}--\ref{prop_comm_rel_stand_c}),
passing to an open subset of~$X$, we may assume that $\mathfrak F(X)$ consists of all faces of the cone $\mathcal C$ generated by $\rho_1$ and~$\rho_2$.
Then $X$ is simple, and we apply the construction of \S\,\ref{subsec_simple_horosph}.
Let $\widetilde {\mathcal L}_{\mu_1}, \widetilde {\mathcal L}_{\mu_2}$ be the $\widetilde G$-modules generated by $\partial_{f_1}, \partial_{f_2}$, respectively.
Then the claim follows from Proposition~\ref{prop_comm_rel_aff}.

In case~(\ref{prop_comm_rel_stand_d}) we automatically get $\mu_1,\mu_2 \in \mathfrak X(G)$ and $\dim \mathcal L_{\mu_1} = \dim \mathcal L_{\mu_2} = 1$, and thus it suffices to compute $[\xi_{\mu_1},\xi_{\mu_2}]$.
In turn, the latter can be done on~$Z$, and a direct computation yields $[\xi_{\mu_1},\xi_{\mu_2}] = \nu_{\rho_1-\rho_2}$.
\end{proof}

\begin{proposition} \label{prop_comm_rel_compl}
Suppose $X$ is complete.
Then Proposition~\textup{\ref{prop_comm_rel_stand}} lists the commutation relations between all possible $G$-modules $\mathcal L_{\mu_1}, \mathcal L_{\mu_2}$ with $\mu_1,\mu_2 \in \mathfrak R(\mathfrak F(X)) \cap \Lambda^+$.
\end{proposition}

\begin{proof}
If cases (\ref{prop_comm_rel_stand_a}--\ref{prop_comm_rel_stand_c}) of Proposition~\ref{prop_comm_rel_stand} do not hold, then if $\langle \rho_1, \mu_2 \rangle > 0$ and $\langle \rho_2, \mu_1 \rangle > 0 $.
Lemma~\ref{lemma_>0} then yields $\mu_1 + \mu_2 = 0$.
\end{proof}

\section{Vertical \texorpdfstring{$B$}{B}-root subgroups in the horospherical case}
\label{sec_hor_vert}

In this subsection we assume that $G = C \times G^{ss}$ where $C$ is a torus and $G^{ss}$ is a simply connected semisimple group.
For every $\gamma \in \Pi$, let $\varpi_\gamma \in \mathfrak X(T^{ss})$ be the corresponding fundamental weight.
Consider the open subset $G_0 = U T U^- \simeq U \times T \times U^-$ of~$G$.
For every $\lambda \in \mathfrak X(T)$ let $F_\lambda \in \KK[T]$ be the function representing the character $-\chi$.
Then $F_\lambda$ naturally extends to a $(B\times B^-)$-semiinvariant function in $\KK[G_0]$ of biweight~$(\lambda, -\lambda)$.

\subsection{Certain LND's on~\texorpdfstring{$\KK[G]$}{K[G]}}

For every $\beta \in \Pi$, let $D_\beta$ be the corresponding $B \times B^-$-stable prime divisor in~$G$.
Then for every $\lambda \in \mathfrak X(T)$ the order of $F_\lambda$ along~$D_\beta$ equals~$\langle \beta^\vee, \lambda \rangle$.
Consider the minimal parabolic subgroup $P_\beta \supset B$ with standard Levi subgroup $L_\beta$ and the corresponding opposite parabolic subgroup $P^-_{\beta}$.
Let $R_\beta$ (resp.~$R^-_\beta$) be the unipotent radical of $P_\beta$ (resp.~$P^-_\beta$).
Put also $G_\beta = R_\beta P^-_\beta$; this is an open subset in $G$ containing~$G_0$.
One has $G_\beta \setminus G_0 = G_\beta \cap D_\beta$.

For every $\beta,\gamma \in \Pi$, let $V_{\beta,\gamma}$ be the simple $L_\beta$-module with highest weight $\varpi_\gamma$ and let $v_{\beta,\gamma}$ be a lowest-weight vector in~$V_{\beta,\gamma}$.
Then $V_{\beta,\gamma}$ is one-dimensional if $\beta \ne \gamma$ and two-dimensional if $\beta = \gamma$.
Put also $v'_{\beta,\beta} = e_\beta v_{\beta,\beta}$, so that $v_{\beta,\beta}, v'_{\beta,\beta}$ form a basis in~$V_{\beta,\beta}$.
Put $V_\beta = \bigoplus \limits_{\gamma \in \Pi} V_{\beta,\gamma}$; then the representation of $L_\beta^{ss} = L_\beta \cap G^{ss}$ on $V_\beta$ is faithful.
Let $\widetilde{L}_\beta^{ss}$ be the image of $L_\beta^{ss}$ in $\GL(V_{\beta,\beta})$.
Then $\widetilde{L}_\beta^{ss}$ equals either $\GL(V_{\beta,\beta})$ or $\SL(V_{\beta,\beta})$.
We have $L_\beta \simeq C \times \widetilde{L}_\beta^{ss} \times \prod\limits_{\gamma \in \Pi \setminus \beta} \GL(V_{\beta,\gamma})$.

Consider the open subset $G_0 \cap L_\beta = U_\beta \times T \times U_{-\beta} \subset L_\beta$ and let $\xi_\beta$ be the coordinate function on $U_\beta$, so that $\xi_\beta(u_\beta(x)) = x$ for all $x \in \KK$.
We shall also regard $\xi_\beta$ as a regular function on~$G_0 \cap L_\beta$.

\begin{lemma} \label{lemma_xi_beta}
Given $\lambda \in \mathfrak X(T)$ and $k \in \ZZ_{\ge0}$, the function $F_\lambda \xi_\beta^k \in \KK[G_0 \cap L_\beta]$ extends to a regular function on $L_\beta$ if and only if $\langle \beta^\vee, \lambda \rangle \ge k$.
\end{lemma}

\begin{proof}
In the basis $v'_{\beta,\beta}, v_{\beta,\beta}$ of $V_{\beta,\beta}$, the matrices of $u_\beta(x) \in U_\beta, t \in T, u_{-\beta}(y) \in U_{-\beta}$ in $\widetilde{L}_\beta^{ss}$ are $\begin{pmatrix} 1 & x \\ 0 & 1 \end{pmatrix}, \begin{pmatrix} \chi_1(t) & 0 \\ 0 & \chi_2(t) \end{pmatrix}, \begin{pmatrix} 1 & 0 \\ y & 1 \end{pmatrix}$, respectively, where $\chi_1 = \varpi_\beta$ and $\chi_2 = \varpi_\beta - \beta$.
The matrix of $u_\beta(x) t u_{-\beta}(y)$ equals $\begin{pmatrix} \chi_1(t) +\chi_2(t)xy & \chi_2(t)x \\ \chi_2(t)y & \chi_2(t) \end{pmatrix}$.
Thus $\xi_\beta = g_{12}/g_{22}$ on $G_0 \cap L_\beta$ where $g_{ij}$ stands for the $ij$-th element of the matrix of an element of $\widetilde{L}_\beta^{ss}$.
Clearly, $g_{22} = F_{\beta - \varpi_\beta}$, hence $\xi_\beta = g_{12}F_{\varpi_\beta-\beta}$.
Since the order of $g_{12}$ along $D_\beta$ is~$0$ and $\langle \beta^\vee, \varpi_\beta - \beta \rangle = -1$, we get the claim.
\end{proof}

For every $\alpha \in \Delta^+$ consider the $\GG_a$-action on~$G_0$ given by 
\begin{equation} \label{eqn_Ualpha-action}
(s,(x,y)) \mapsto (xu_\alpha(-s),y) \quad \text{for all} \quad s \in \KK, x \in U, y \in B^-
\end{equation}
and let $\partial_\alpha$ be the derivation of $\KK[G_0]$ corresponding to this action.
Then $\partial_\alpha$ is $B \times B^-$-normalized of biweight $(\alpha, 0)$.

\begin{proposition} \label{prop_ext_to_G}
Let $\alpha \in \Delta^+$ and $\mu \in \mathfrak X(T)$.
\begin{enumerate}[label=\textup{(\alph*)},ref=\textup{\alph*}]
\item \label{prop_ext_to_G_a}
The derivation $F_{\mu-\alpha} \partial_\alpha$ preserves $\KK[G]$ if and only if it preserves $\KK[G_\beta]$ for all $\beta \in \Pi$.

\item \label{prop_ext_to_G_b}
Given $\beta \in \Pi$, the derivation $F_{\mu-\alpha} \partial_\alpha$ preserves $\KK[G_\beta]$ if and only if $\langle \beta^\vee, \mu \rangle \ge c(\alpha,\beta)$, where the values $c(\alpha,\beta)$ for all possible cases are collected in Table~\textup{\ref{table_ab}}.
\end{enumerate}
\end{proposition}

\begin{table}[h]
\caption{} \label{table_ab}
\begin{tabular}{|c|c|c|c|c|}
\hline
No. & $\Delta(\alpha,\beta)$ & $\beta$ & $\angle(\alpha, \beta)$ & $c(\alpha,\beta)$ \\
\hline
\no \label{no1} & $\mathsf A_1$ & $\beta=\alpha$ & $0$ & $2$ \\
\hline
\no \label{no2} & $\mathsf A_1 \times \mathsf A_1$ & -- & $\pi/2$ & $0$ \\
\hline
\no \label{no3} & $\mathsf A_2$ & -- & $\pi/3$ & $1$ \\
\hline
\no \label{no4} & $\mathsf A_2$ & -- & $2\pi/3$ & $0$ \\
\hline
\no \label{no5} & $\mathsf B_2$ & short & $\pi/4$ & $2$ \\
\hline
\no \label{no6} & $\mathsf B_2$ & short & $\pi/2$ & $1$ \\
\hline
\no \label{no7} & $\mathsf B_2$ & short & $3\pi/4$ & $0$ \\
\hline
\no \label{no8} & $\mathsf B_2$ & long & $\pi/4$ & $1$ \\
\hline
\no \label{no9} & $\mathsf B_2$ & long & $3\pi/4$ & $0$ \\
\hline
\no \label{no10} & $\mathsf G_2$ & short & $\pi/6$ & $3$ \\
\hline
\no \label{no11} & $\mathsf G_2$ & short & $\pi/3$ & $2$ \\
\hline
\no \label{no12} & $\mathsf G_2$ & short & $2\pi/3$ & $1$ \\
\hline
\no \label{no13} & $\mathsf G_2$ & short & $5\pi/6$ & $0$ \\
\hline
\no \label{no14} & $\mathsf G_2$ & long & $\pi/6$ & $1$  \\
\hline
\no \label{no15} & $\mathsf G_2$ & long & $5\pi/6$ & $0$ \\
\hline
\end{tabular}
\end{table}

\begin{remark}
Some cases in Table~\textup{\ref{table_ab}} are excluded as duplicate.
\end{remark}

\begin{proof}
(\ref{prop_ext_to_G_a})
This is straightforward from $\KK[G] = \bigcap \limits_{\beta \in \Pi} \KK[G_\beta]$.

(\ref{prop_ext_to_G_b})
Put $\Delta(\alpha,\beta) = \Delta \cap \ZZ\lbrace \alpha, \beta \rbrace$; this is a root system of rank $\le 2$.

Case $\alpha = \beta$.
Then $F_{\mu-\alpha} \partial_\alpha$ preserves $\KK[G_\beta]$ if and only if it preserves $\KK[L_\beta]$.
On the open subset $G_0 \cap L_\beta \simeq U_\beta \times T \times U_{-\beta}$ we have $(s,(x,t,y)) \mapsto (x-s, t, y)$ for all $s,x,y \in \KK$ and $t \in T$.
Hence $\partial_\alpha(\xi_\beta) = -1$, $\partial_\alpha(\xi_{-\beta}) = 0$, $\partial_\alpha(\KK[T]) = 0$.
It follows that on $\KK[L_\beta]$ we have $\partial_\alpha(g_{11}) = -g_{21}$, $\partial_\alpha(g_{12}) = -g_{22}  = -F_{\beta-\varpi_\beta}$, $\partial(g_{21}) = \partial(g_{22}) = 0$.
Since the order of $g_{21}$ along $D_\beta$ is~$0$, it follows that $F_{\mu-\alpha} \partial_\alpha$ preserves $\KK[L_\beta]$ if and only if $\langle \beta^\vee, \mu - \alpha \rangle \ge 0$, that is, $\langle \beta^\vee, \mu \rangle \ge 2$.

In what follows we assume $\alpha \ne \beta$, so that the rank of $\Delta(\alpha,\beta)$ equals~$2$.
The general strategy is as follows.
Put $\Theta(\alpha,\beta) = \Delta^+ \cap \ZZ_{\ge0} \lbrace \alpha, \beta \rbrace$ and $\Theta_0(\alpha,\beta) = \Theta(\alpha,\beta) \setminus \lbrace \beta \rbrace$.
Let $R''_\beta$ be the direct product (in any fixed order) of all $U_\gamma$ with $\gamma \in \Delta^+ \setminus \Theta(\alpha,\beta)$.
Let $R'_\beta$ be the direct product of all $U_\gamma$ with $\gamma \in \Theta_0(\alpha,\beta)$ in the order that will be specified in each case below.
Then we fix the isomorphism $R''_\beta \times R'_\beta \times U_\beta \times T \times U_{-\beta} \times R^-_\beta \xrightarrow{\sim} G_0$ taking each tuple of elements to their product.
For every $\gamma \in \Delta^+$ we let $\xi_\gamma$ denote the coordinate function of $U_\gamma$, so that $\xi_\gamma(u_\gamma(s)) = s$ for all $s \in \KK$.
It will turn out that the $\GG_a$-action~(\ref{eqn_Ualpha-action}) can change only elements in the component~$R'_\beta$, therefore $\partial_\alpha$ vanishes on $\KK[R''_\beta]$ and $\KK[U_\beta \times T \times U_{-\beta} \times R^-_\beta]$.
In all the cases below, the explicit formula for the $\GG_a$-action (\ref{eqn_Ualpha-action}) is provided only for the component $R'_\beta$.
For short, the subscript $ij$ always denotes $i\alpha + j\beta$.
All case numbers refer to Table~\ref{table_ab}.

Case~$\alpha + \beta \notin \Delta$.
It occurs in cases \ref{no2}, \ref{no3}, \ref{no5}, \ref{no8}, \ref{no10}, \ref{no14}.
Then $R'_\beta = U_{10}$.
Since $U_{10}$ and $U_{01}$ commute, the $\GG_a$-action~(\ref{eqn_Ualpha-action}) on $R'_\beta$ is given by $(s,x_{10}) \mapsto x_{10}-s$.
Then $\partial_\alpha(\xi_{10}) = -1$.
Thus $F_{\mu-\alpha}\partial_\alpha$ preserves $\KK[G_\beta]$ if and only if $\langle \beta^\vee, \mu - \alpha \rangle \ge 0$, which is equivalent to $\langle \beta^\vee, \mu \rangle \ge \langle \beta^\vee, \alpha \rangle$.

Case~$\Theta(\alpha,\beta) = \lbrace \alpha, \beta, \alpha + \beta \rbrace$.
It occurs in cases~\ref{no4}, \ref{no6}, \ref{no11}.
We put $R'_\beta = U_{10} \times U_{11}$.
Thanks to formulas (\ref{eqn_no4}), (\ref{eqn_no6}), (\ref{eqn_no11}), the action of $\partial_\alpha$ on $\KK[R'_\beta]$ is given by
\[
\xi_{10} \mapsto -1, \quad \xi_{11} \mapsto c\xi_{01}
\]
where $c = 1,\,-2,\,-3$, respectively.
By Lemma~\ref{lemma_xi_beta}, the derivation $F_{\mu-\alpha}\partial_\alpha$ preserves $\KK[G_\beta]$ if and only if $\langle \beta^\vee, \mu - \alpha \rangle \ge 1$, which is equivalent to $\langle \beta^\vee, \mu \rangle \ge \langle \beta^\vee, \alpha \rangle + 1$.

Case~\ref{no7}.
We put $U_{10} \times U_{11} \times U_{12}$.
Thanks to formula~(\ref{eqn_no7}), for an appropriate choice of the elements $e_\gamma$ with $\gamma \in \Theta_0(\alpha,\beta)$, the action of $\partial_\alpha$ on $\KK[R'_\beta]$ is given by
\[
\xi_{10} \mapsto -1, \quad \xi_{11} \mapsto -\xi_{01}, \quad \xi_{12} \mapsto -\xi_{01}^2.
\]
By Lemma~\ref{lemma_xi_beta}, the derivation $F_{\mu-\alpha}\partial_\alpha$ preserves $\KK[G_\beta]$ if and only if $\langle \beta^\vee, \mu - \alpha \rangle \ge 2$, which is equivalent to $\langle \beta^\vee, \mu \rangle \ge \langle \beta^\vee, \alpha \rangle + 2 = 0$.

Case~\ref{no9}.
We put $U_{10} \times U_{11} \times U_{21}$.
Thanks to formula~(\ref{eqn_no9}), for an appropriate choice of the elements $e_\gamma$ with $\gamma \in \Theta_0(\alpha,\beta)$, the action of $\partial_\alpha$ on $\KK[R'_\beta]$ is given by
\[
\xi_{10} \mapsto -1, \quad \xi_{11} \mapsto \xi_{01}, \quad \xi_{21} \mapsto 2\xi_{11}.
\]
By Lemma~\ref{lemma_xi_beta}, the derivation $F_{\mu-\alpha}\partial_\alpha$ preserves $\KK[G_\beta]$ if and only if $\langle \beta^\vee, \mu - \alpha \rangle \ge 1$, which is equivalent to $\langle \beta^\vee, \mu \rangle \ge \langle \beta^\vee, \alpha \rangle + 1 = 0$.

Case~\ref{no12}.
We put $U_{10} \times U_{11} \times U_{12} \times U_{21}$.
Thanks to formula~(\ref{eqn_no11}), for an appropriate choice of the elements $e_\gamma$ with $\gamma \in \Theta_0(\alpha,\beta)$, the action of $\partial_\alpha$ on $\KK[R'_\beta]$ is given by
\[
\xi_{10} \mapsto -1, \quad \xi_{11} \mapsto 2\xi_{01}, \quad \xi_{12} \mapsto 3\xi_{01}^2, \quad \xi_{21} \mapsto -3\xi_{11}.
\]
By Lemma~\ref{lemma_xi_beta}, the derivation $F_{\mu-\alpha}\partial_\alpha$ preserves $\KK[G_\beta]$ if and only if $\langle \beta^\vee, \mu - \alpha \rangle \ge 2$, which is equivalent to $\langle \beta^\vee, \mu \rangle \ge \langle \beta^\vee, \alpha \rangle + 2 = 1$.

Case~\ref{no13}.
We put $U_{10} \times U_{11} \times U_{12} \times U_{13} \times U_{23}$.
Thanks to formula~(\ref{eqn_no13}), for an appropriate choice of the elements $e_\gamma$ with $\gamma \in \Theta_0(\alpha,\beta)$, the action of $\partial_\alpha$ on $\KK[R'_\beta]$ is given by
\[
\xi_{10} \mapsto -1, \quad \xi_{11} \mapsto -\xi_{01}, \quad \xi_{12} \mapsto \xi_{01}^2, \quad \xi_{13} \mapsto \xi_{01}^3, \quad \xi_{23} \mapsto \xi_{13} - 3\xi_{01}\xi_{12}.
\]
By Lemma~\ref{lemma_xi_beta}, the derivation $F_{\mu-\alpha}\partial_\alpha$ preserves $\KK[G_\beta]$ if and only if $\langle \beta^\vee, \mu - \alpha \rangle \ge 3$, which is equivalent to $\langle \beta^\vee, \mu \rangle \ge \langle \beta^\vee, \alpha \rangle + 3 = 0$.

Case~\ref{no15}.
We put $U_{10} \times U_{11} \times U_{21} \times U_{31} \times U_{32}$.
Thanks to formula~(\ref{eqn_no15}), for an appropriate choice of the elements $e_\gamma$ with $\gamma \in \Theta_0(\alpha,\beta)$, the action of $\partial_\alpha$ on $\KK[R'_\beta]$ is given by
\[
\xi_{10} \mapsto -1, \quad \xi_{11} \mapsto \xi_{01}, \quad \xi_{21} \mapsto -2\xi_{11}, \quad \xi_{31} \mapsto 3\xi_{21}, \quad \xi_{32} \mapsto -3\xi_{11}^2 + 3\xi_{01}\xi_{21}.
\]
By Lemma~\ref{lemma_xi_beta}, the derivation $F_{\mu-\alpha}\partial_\alpha$ preserves $\KK[G_\beta]$ if and only if $\langle \beta^\vee, \mu - \alpha \rangle \ge 1$, which is equivalent to $\langle \beta^\vee, \mu \rangle \ge \langle \beta^\vee, \alpha \rangle + 1 = 0$.
\end{proof}

\subsection{Certain LND's on \texorpdfstring{$\KK[G/U^-]$}{K[G/U-]}}

Now put $X = G/U^{-}$ and recall the open cell $X_{\mathcal D} \subset X$.
The local structure theorem gives the decomposition $X_{\mathcal D} \simeq U \times Z$ with $Z = T$.
We have $M = \mathfrak X(T)$.
Given $\lambda \in \mathfrak X(T)$, the restriction of the function $f_\lambda \in \KK[X]$ to $Z$ is just the character~$-\lambda$.
For every $\alpha \in \Delta^+$, let $\delta_\alpha$ be the LND on $\KK[U \times T]$ corresponding to the $\GG_a$-action $(s,(u,t)) \mapsto (uu_\alpha(-s),t)$.

For every $\beta \in \Pi$, let $D_\beta \subset X$ be the corresponding color and let $X_\beta \subset X$ be the open subset obtained by removing all colors except~$D_\beta$.
Clearly, $U_{-\beta}$ is a maximal unipotent subgroup of~$L_\beta$ and one has a natural $B$-equivariant isomorphism $X_\beta \simeq R_\beta \times L_\beta/U_{-\beta}$.
The open cell $X_0 \subset X_\beta$ is then naturally identified with $R_\beta \times U_{\beta} \times T$.

For every $\beta \in \Pi$, recall the $L_\beta$-module $V_\beta$.
For every $\gamma \in \Pi$, let $y_{\beta,\gamma}$ be the coordinate function for $v_{\beta,\gamma}$ and $x_\beta$ the coordinate function of~$v_{\beta,\beta}'$.
The variety $Y_\beta = L_\beta^{ss}/U_{-\beta}$ is realized as the closure in~$V_\beta$ of the orbit $L_\beta^{ss}v$ where $v = \sum \limits_{\gamma \in \Pi}v_{\beta,\gamma}$.
Moreover, in this realization one has $Y_\beta = \lbrace \sum\limits_{\gamma \in \Pi} w_\gamma \mid w_\gamma \in V_{\beta,\gamma} \setminus \lbrace 0 \rbrace \rbrace$.
In particular, $\KK[Y_\beta]$ is generated by the functions $x_\beta,y_{\beta,\beta}$, and all $y_{\beta,\gamma}^{\pm1}$ with $\gamma \ne \beta$.
Note that $y_{\beta,\gamma} = f_{-\varpi_\gamma}$ for all $\gamma \in \Pi \setminus \lbrace \beta \rbrace$ and $y_{\beta,\beta} = f_{\beta - \varpi_\beta}$.
Given $\lambda \in \mathfrak X(T)$, one has $f_\lambda \in \KK[Y_\beta]$ if and only if $\langle \beta^\vee, \lambda \rangle \ge 0$.
The subset $T \subset Y_\beta$ is given by the condition $x_\beta = 0$, and the subset $X_0 \cap Y_\beta$ is given by the condition $y_\beta \ne 0$.
Identify $X_0 \cap Y_\beta$ with $U_{\beta} \times T$ in a natural way and let $\xi_\beta$ be the coordinate function on $U_{\beta}$.
Then $\xi_\beta = x_\beta / y_{\beta,\beta}$.

For every $\alpha \in \Delta^+$ and $\beta \in \Pi$, put
\[
c'(\alpha,\beta) =
\begin{cases}
1 = c(\alpha,\alpha)-1 & \text{if} \ \alpha = \beta,\\
c(\alpha,\beta) & \text{if} \ \alpha \ne \beta.
\end{cases}
\]

\begin{proposition} \label{prop_G/U-}
Let $\alpha \in \Delta^+$ and $\mu \in \mathfrak X(T)$.
\begin{enumerate}[label=\textup{(\alph*)},ref=\textup{\alph*}]
\item \label{prop_G/U-_a}
The derivation $f_{\mu - \alpha} \delta_\alpha$ preserves $\KK[X]$ if and only if it preserves $\KK[X_\beta]$ for all $\beta \in \Pi$.
\item \label{prop_G/U-_b}
Given $\beta \in \Pi$, the derivation $f_{\mu - \alpha} \delta_\alpha$ preserves $\KK[X_\beta]$ if and only if $\langle \beta^\vee, \mu \rangle \ge c'(\alpha,\beta)$.
\end{enumerate}
\end{proposition}

\begin{proof}
(\ref{prop_G/U-_a})
This is straightforward from $\KK[X] = \bigcap \limits_{\beta \in \Pi} \KK[X_\beta]$.

(\ref{prop_G/U-_b})
Case $\alpha = \beta$.
Then $f_{\mu-\alpha} \delta_\alpha$ preserves $\KK[X_\beta]$ if and only if it preserves $\KK[Y_\beta]$.
On the open subset $X_0 \cap Y_\beta \simeq U_\beta \times T$ we have $(s,(x,t)) \mapsto (x-s, t)$ for all $s,x \in \KK$ and $t \in T$.
Hence $\delta_\alpha(\xi_\beta) = -1$ and $\delta_\alpha(\KK[T]) = 0$.
It follows that on $\KK[Y_\beta]$ we have $\delta_\alpha(x_\beta) = -y_{\beta,\beta}  = -f_{\beta-\varpi_\beta}$, $\delta(y_{\beta,\beta}) = 0$.
Thus $f_{\mu-\alpha} \delta_\alpha$ preserves $\KK[Y_\beta]$ if and only if $\langle \beta^\vee, \mu - \alpha \rangle \ge -1$, that is, $\langle \beta^\vee, \mu \rangle \ge 1$.

Case~$\alpha \ne \beta$.
Basically the argument repeats that of the proof of Proposition~\ref{prop_ext_to_G}.
\end{proof}

\subsection{Certain LND's on arbitrary horospherical homogeneous spaces}

Let $P \supset B$ be a parabolic subgroup with standard Levi subgroup $L \subset P$ and consider the corresponding opposite parabolic subgroup $P^- \supset B^-$.
Let $\Omega \subset \Delta^+$ be the set of highest weights of $\mathfrak p_u$ as an $L$-module with respect to $B_L = B \cap L$.

\begin{proposition} \label{prop_G_larger_cell}
Under the above assumptions and notation, for every $\alpha \in \Omega$ the action~\textup{(\ref{eqn_Ualpha-action})} on~$G_0$ extends to the larger open set $P_u P^- \simeq P_u \times P^-$ and is given by the formula $(s,(x,y)) \mapsto (xu_\alpha(-s),y)$ for all $s \in \KK, x \in P_u, y \in P^-$.
In particular, the derivation $\partial_\alpha$ is $B \times P^-$-normalized \textup(of biweight~$(\alpha,0)$\textup).
\end{proposition}

\begin{proof}
Put $U_L = U \cap L$.
Then $U_\alpha$ commutes with $U_L$ for all $\alpha \in \Omega$.
Since $U \simeq P_u \times U_L$, for all $s \in \KK, x_1 \in P_u, x_2 \in U_L, y \in B^-$ we have $x_1x_2u_\alpha(-s)y = x_1u_\alpha(-s)x_2y$.
It remains to notice that $U_LB^-$ is an open subset of~$P^-$.
\end{proof}

Now suppose $S \supset U^-$ is a horospherical subgroup in~$G$ with normalizer equal to~$P^-$.
Put $X = G/S$ and consider the open subset $X_{\mathcal D}$ from the local structure theorem.
Recall the weight lattice~$M$.
For every $\alpha \in \Omega$, recall the vector field $\varepsilon_\alpha$ considered in~\S\,\ref{subsec_local_descr}.

\begin{proposition} \label{prop_LND_on_G/S}
Suppose $\alpha \in \Omega$ and $\mu \in \alpha + M$.
Then the following assertions hold.
\begin{enumerate}[label=\textup{(\alph*)},ref=\textup{\alph*}]
\item \label{prop_LND_on_G/S_a}
If $\langle \beta^\vee, \mu \rangle \ge c'(\alpha,\beta)$ for all $\beta \in \Pi$ then the vector field $f_{\mu - \alpha} \varepsilon_\alpha$ extends to the whole~$X$.

\item \label{prop_LND_on_G/S_b}
If $\langle \beta^\vee, \mu \rangle \ge c(\alpha,\beta)$ for all $\beta \in \Pi$ then the extension to $G/S$ of the vector field $f_{\mu - \alpha} \varepsilon_\alpha$ is $\GG_a$-integrable.
\end{enumerate}
\end{proposition}

\begin{proof}
(\ref{prop_LND_on_G/S_a})
This follows from Proposition~\ref{prop_G/U-} by considering the natural morphism\\ $\varphi \colon G/U^- \to X$.
More precisely, we have a well-defined map of vector fields for the restricted map $\varphi^{-1}(X_{\mathcal D}) \to X_{\mathcal D}$, and it extends to the whole $G/U^-$.

(\ref{prop_LND_on_G/S_b})
Thanks to Propositions~\ref{prop_ext_to_G} and~\ref{prop_G_larger_cell}, the derivation $F_{\mu - \alpha} \partial_\alpha$ preserves $\KK[G]$ and is $B \times P^-$-normalized, respectively.
Since $\mu - \alpha \in M$, it follows that $F_{\mu - \alpha} \partial_\alpha$ is invariant with respect to the action of $S$ on the right, hence gives rise to a $\GG_a$-subgroup with the same property.
This $\GG_a$-subgroup descends to a $B$-root subgroup on~$G/S$ corresponding to the derivation $f_{\mu-\alpha}\delta_\alpha$ on~$X_{\mathcal D}$.
\end{proof}

\begin{corollary}
Suppose $X$ is an arbitrary horospherical $G$-variety containing $G/S$ as an open $G$-orbit, $\alpha \in \Omega$, $\mu \in \alpha + M$, and $\langle \beta^\vee, \mu \rangle \ge c'(\alpha,\beta)+1$ for all $\beta \in \Pi$.
Then the vector field $f_{\mu - \alpha} \varepsilon_\alpha$ on $X_{\mathcal D}$ extends to an $\GG_a$-integrable vector field on the whole~$X$.
\end{corollary}

\section{The horospherical case for a group of semisimple rank one}
\label{sec_ssrank1}

Throughout this subsection we assume that $G = \SL_2 \times S$ where $S$ is a torus.
Let $\alpha \in \Delta^+$ be the unique positive root and let $\varpi = \alpha/2$ be the corresponding fundamental weight.
Let $X$ be a horospherical $G$-variety with colored fan ${}^c \mathfrak F(X)$ and assume that the subgroup $\SL_2 \subset G$ acts nontrivially on~$X$.
Then the open $G$-orbit in $X$ is isomorphic to $G/H$ where $U^- \subset H \subset B^-$.
We also recall the open subset $X_\mathcal D$ and the canonical section~$Z \subset X_\mathcal D$.

\subsection{Vertical \texorpdfstring{$B$}{B}-root subgroups}

\begin{proposition}
Given $\mu \in \mathfrak X(T)$, there exists a nonzero $B$-semiinvariant vector field on~$X$ of weight~$\mu$ if and only if $\mu \in \alpha + \Gamma_Z$ and $\langle \alpha^\vee, \mu \rangle \ge 1$.
\end{proposition}

\begin{proof}
Recall from Theorem~\ref{thm_LNDs_on_X0} that all vertical $B$-semiinvariant vector fields on $X_\mathcal D$ have the form $f_{\mu - \alpha} \varepsilon_\alpha$ for some $\mu \in \alpha + \Gamma_Z$.
By Proposition~\ref{prop_G/U-}, $f_{\mu - \alpha} \varepsilon_\alpha$ extends to a vector field on the whole~$X$ if and only if $\langle \alpha^\vee, \mu \rangle \ge 1$.
\end{proof}

\begin{corollary} \label{crl_vert_aff_compl}
Suppose $X$ is either affine or complete and $\mu \in \mathfrak X(T)$.
Then there exists a vertical $B$-root subgroup on~$X$ of weight $\mu$ if and only if $\mu \in \alpha + \Gamma_Z$ and $\langle \alpha^\vee, \mu \rangle \ge 1$.
\end{corollary}

\begin{remark}
When $X$ is affine, Corollary~\ref{crl_vert_aff_compl} can be proved directly as follows.
Recall that the open $G$-orbit in~$X$ is isomorphic to $G / H$ with $U^- \subset H \subset B^-$, so
\[
\KK[X] \subset \KK[G/H] \subset \KK[G/U^-] \simeq \KK[S] \otimes \KK[x,y].
\]
We may assume that the $T$-weights of the functions $x$ and~$y$ are $\varpi$ and $-\varpi$, respectively.
The vector field $\varepsilon_\alpha$ from Theorem~\ref{thm_LNDs_on_X0} is induced by the action of~$U$, and so the LND corresponding to $\varepsilon_\alpha$ is $\partial_\alpha = x \frac{\partial}{\partial y}$.
Recall from Theorem~\ref{thm_LNDs_on_X0} that, for $\mu \in \mathfrak X(T)$, the derivation $f_{\mu-\alpha} \partial_\alpha$ preserves $\KK[X_{\mathcal D}]$ if and only if $\mu \in \alpha + \Gamma_Z$.
Since $\SL_2 \subset G$ acts nontrivially on~$X$, there is a simple $G$-submodule $V \subset \KK[X]$ with $\dim V \ge 2$.
Put $k = \langle \alpha^\vee, \mu \rangle$.
Then the derivation $f_{\mu-\alpha} \partial_\alpha$ can be expressed as $f_{\mu - k\varpi} x^{k-1}\frac{\partial}{\partial y}$ where $f_{\mu - k\varpi}$ is an invertible function in~$\KK[S]$.
If $k \le 0$, then $f_{\mu - \alpha} \partial_\alpha(V) \not\subset \KK[G/U^-]$, hence this derivation does not preserve~$\KK[X]$.
On the other hand, if $k \ge 1$, then $f_{\mu-\alpha} \partial_\alpha$ sends each $G$-submodule $\KK[X]_\lambda \subset \KK[X]$ to $\KK[X]_{\lambda+\mu-\alpha}$.
The conditions $\mu \in \alpha + \Gamma_Z$ and $k \ge 1$ imply $\mu - \alpha \in \Gamma$, hence $f_{\mu - \alpha} \partial_\alpha$ preserves $\KK[X]$.

\end{remark}

\begin{remark}
For affine $X$, Corollary~\ref{crl_vert_aff_compl} implies a complete description of all $B$-root subgroups on~$X$, which solves in the horospherical case the problem raised in~\cite[Remark~7.14]{AA}.
Indeed, recall from Proposition~\ref{prop_hor_aff_crit} that $X$ is simple with colored cone of the form $(\mathcal C, \mathcal D)$.
If $\mu$ is the weight of a horizontal $B$-root subgroup on~$X$, then $\mu \in \mathfrak R(\mathcal C) \cap \Lambda^+$ by Proposition~\ref{prop_pairing_with_mu}.
We already know from~\S\,\ref{ssec_stand_aff} that for every $\mu \in \mathfrak R(\mathcal C) \cap \Lambda^+$ there exists a standard horizontal $B$-root subgroup of weight~$\mu$ on~$X$.
On the other hand, it follows from Theorem~\ref{thm_LNDs_on_X0} that for every $\mu \in \mathfrak X(T)$ the space of $B$-semiinvariant vertical vector fields on~$X$ of weight~$\mu$ is at most one-dimensional, and a complete description of them is given by Corollary~\ref{crl_vert_aff_compl}.
\end{remark}

\subsection{The Lie algebra of the connected automorphism group in the complete case}

In this subsection we assume that $X$ is complete with colored fan ${}^c \mathfrak F(X)$.

\begin{proposition}
Suppose $\mu \in \alpha + \Gamma_Z$ and there exists a vertical $B$-root subgroup $H$ on~$X$ of weight~$\mu$.
Then either $\mu = \alpha$ or $\langle \alpha^\vee, \mu \rangle = 1$.
\end{proposition}

\begin{proof}
If $\langle \alpha^\vee, \mu \rangle = 0$, then $\mu \in \mathfrak X(G)$ and thus $H$ is a $G$-root subgroup by Proposition~\ref{prop_weight_is_dom}, hence $H$ is horizontal by Proposition~\ref{prop_G-root_subgroups}, a contradiction.
It follows that $\langle \alpha^\vee, \mu \rangle \ge 1$.
If $\langle \alpha^\vee, \mu \rangle \ge 2$, then $\langle \alpha^\vee, \mu - \alpha \rangle \ge 0$.
Since $\mu - \alpha \in \Gamma_Z$, we conclude that $\langle \rho, \mu - \alpha \rangle \ge 0$ for all $\rho \in \mathfrak F^1(X)$.
Since the fan $\mathfrak F(X)$ is complete, it follows that $\mu - \alpha = 0$.
\end{proof}

Recall that in~\S\,\ref{ssec_comm_rel_stand} we computed commutation relations between all simple $G$-modules of vector fields on~$X$ generated by vector fields corresponding to standard $B$-root subgroups.
In the remaining part of this subsection we compute all commutation relations between simple $G$-modules of vector fields on~$X$ involving vertical vector fields.

The vertical vector field $\varepsilon_\alpha$ comes from the action of~$U$.
For every $\mu \in \alpha + \Gamma_Z$, let $\mathcal K_\mu$ denote the simple $G$-module of vector fields on~$X$ with highest weight $\mu$ and highest weight vector $f_{\mu - \alpha} \varepsilon_\alpha$.
Since all simple $G$-modules in the space of vector fields on~$X$ are $G$-stable, they are $U$-stable and hence $\varepsilon_\alpha$-stable.
Let $\varepsilon_{-\alpha}$ denote the vector field corresponding to the action of~$U^-$ on~$X$ and put $\eta_\alpha = [\varepsilon_{\alpha}, \varepsilon_{-\alpha}]$.
Rescaling $\varepsilon_{-\alpha}$ if necessary we may assume that $\varepsilon_\alpha, \eta_\alpha, \varepsilon_{-\alpha}$ form an $\mathfrak{sl}_2$-triple.

\begin{proposition} \label{prop_comm_vert}
Suppose $\mu_1,\mu_2 \in \alpha + \Gamma_Z$ are such that $\langle \alpha^\vee, \mu_1 \rangle = \langle \alpha^\vee, \mu_2 \rangle = 1$.
Then $[\mathcal K_{\mu_1}, \mathcal K_{\mu_2}] = 0$.
\end{proposition}

\begin{proof}
Put $f_1 = f_{\mu_1-\alpha}$, $f_2 = f_{\mu_2 - \alpha}$ for short.
It suffices to check that $[[\varepsilon_{-\alpha}, f_1\varepsilon_\alpha], f_2 \varepsilon_\alpha]$ = 0.
Indeed,
\begin{multline*}
[[\varepsilon_{-\alpha}, f_1\varepsilon_\alpha], f_2 \varepsilon_\alpha] = (\varepsilon_{-\alpha}(f_1 \varepsilon_\alpha) - (f_1 \varepsilon_\alpha) \varepsilon_{-\alpha}) f_2 \varepsilon_\alpha -  f_2 \varepsilon_\alpha (\varepsilon_{-\alpha}(f_1 \varepsilon_\alpha) - (f_1 \varepsilon_\alpha)\varepsilon_{-\alpha}) = 
\\
((\varepsilon_{-\alpha} f_1) \varepsilon_\alpha - f_1 \eta_\alpha) f_2 \varepsilon_\alpha -  f_2 \varepsilon_\alpha ((\varepsilon_{-\alpha} f_1) \varepsilon_\alpha - f_1 \eta_\alpha) =\\
(\varepsilon_{-\alpha} f_1) \varepsilon_\alpha f_2 \varepsilon_\alpha - f_1 \eta_\alpha f_2 \varepsilon_\alpha -  f_2 \varepsilon_\alpha (\varepsilon_{-\alpha} f_1) \varepsilon_\alpha + f_2 \varepsilon_\alpha f_1 \eta_\alpha =\\
(\varepsilon_{-\alpha} f_1) f_2 \varepsilon_\alpha \varepsilon_\alpha - f_1 (\eta_\alpha f_2) \varepsilon_\alpha - f_1 f_2 \eta_\alpha  \varepsilon_\alpha -  f_2 (\varepsilon_\alpha \varepsilon_{-\alpha} f_1) \varepsilon_\alpha -  f_2 (\varepsilon_{-\alpha} f_1) \varepsilon_\alpha \varepsilon_\alpha + f_2 f_1 \varepsilon_\alpha \eta_\alpha =\\
- f_1 (\eta_\alpha f_2) \varepsilon_\alpha - f_1 f_2 \eta_\alpha  \varepsilon_\alpha -  f_2 (\eta_{\alpha} f_1) \varepsilon_\alpha +  f_1 f_2 \varepsilon_\alpha \eta_\alpha =\\
f_1f_2 \varepsilon_\alpha - f_1f_2 [\eta_\alpha, \varepsilon_\alpha] + f_2f_1 \varepsilon_\alpha = 0.
\end{multline*}

We have used the following relations: $[\varepsilon_\alpha, \varepsilon_{-\alpha}] = \eta_\alpha$, $\varepsilon_\alpha(f_i) = 0$, $\eta_\alpha(f_i) = \langle \alpha^\vee, \mu_i - \alpha \rangle f_i = -f_i$, $[\eta_\alpha, \varepsilon_\alpha] = 2\varepsilon_\alpha$.
\end{proof}

Now assume $\mu_1 \in \mathfrak R(\mathfrak F(X))$, $\mu_2 \in \alpha + \Gamma_Z$, and $\langle \alpha^\vee, \mu_2 \rangle = 1$.
We shall find the commutation relations between $\mathcal L_{\mu_1}$ and $\mathcal K_{\mu_2}$.
Let $\rho \in \mathfrak F^1(X)$ be the element such that $\langle \rho, \mu_1 \rangle = -1$.
Passing to an open subset of~$X$, we may assume that $\mathfrak F(X)$ consists of two cones $\lbrace 0 \rbrace$ and $\QQ_{\ge0} \rho$.
Then $X$ is simple, and we apply the construction of \S\,\ref{subsec_simple_horosph}.
Let $\widetilde {\mathcal L}_{\mu_1}, \widetilde {\mathcal K}_{\mu_2} \in \Der(\KK[\widetilde X])$ be the $\widetilde G$-modules generated by the derivations $\partial_1, \partial_2$, respectively, where $\partial_1 = \partial_{\mu_1}$ and $\partial_2 = f_{\mu_2-\alpha} \varepsilon_\alpha$.

\begin{proposition} \label{prop_comm_rel_2}
The following assertions hold.
\begin{enumerate}[label=\textup{(\alph*)},ref=\textup{\alph*}]
\item \label{prop_comm_rel_2_a}
If $\langle \rho, \mu_2 - \alpha \rangle = 0$ and $\langle \alpha^\vee, \mu_1 \rangle = 0$, then $[\mathcal L_{\mu_1}, \mathcal K_{\mu_2}] = 0$.

\item \label{prop_comm_rel_2_b}
If $\langle \rho, \mu_2 - \alpha \rangle = 0$ and $\langle \alpha^\vee, \mu_1 \rangle \ge 1$, then $\mu_1 + \mu_2 - \alpha  \in \mathfrak R_{\rho}(\mathfrak F(X))$, $[\mathcal L_{\mu_1}, \mathcal K_{\mu_2}] = \mathcal L_{\mu_1+\mu_2-\alpha}$, and $[[\varepsilon_{-\alpha}, \xi_{\mu_1}], f_{\mu_2-\alpha}\varepsilon_\alpha] = - \langle \alpha^\vee, \mu_1 \rangle \xi_{\mu_1+\mu_2-\alpha}$.

\item \label{prop_comm_rel_2_c}
If $\langle \rho, \mu_2 - \alpha \rangle \ge 1$ and $\langle \alpha^\vee, \mu_1 \rangle = 0$, then $[\mathcal L_{\mu_1}, \mathcal K_{\mu_2}] = \mathcal K_{\mu_1+\mu_2}$ and $[\xi_{\mu_1}, f_{\mu_2-\alpha}\varepsilon_\alpha] = \langle \rho, \mu_2 - \alpha \rangle f_{\mu_1+\mu_2-\alpha} \varepsilon_\alpha$.

\item \label{prop_comm_rel_2_d}
If $\langle \rho, \mu_2 - \alpha \rangle \ge 1$ and $\langle \alpha^\vee, \mu_1 \rangle \ge 1$, then $\mu_1 + \mu_2 = \alpha$, $\langle \rho, \mu_2 - \alpha \rangle = \langle \alpha^\vee, \mu_1 \rangle = 1$, $[\mathcal L_{\mu_1}, \mathcal K_{\mu_2}] = \mathcal K_{\alpha} \oplus \KK\nu_{\sigma}$ where $\sigma = \alpha^\vee - 2\rho$, and $[[\varepsilon_{-\alpha}, \xi_{\mu_1}], f_{\mu_2-\alpha}\varepsilon_\alpha] = \frac12 \eta_\alpha - \frac12 \nu_{\sigma}$.
\end{enumerate}
\end{proposition}

\begin{proof}
It suffices to prove the corresponding relations for the derivations $\widetilde {\mathcal L}_{\mu_1}$ and $\widetilde {\mathcal K}_{\mu_2}$.
Take an arbitrary derivation $\partial \in \widetilde{\mathcal L}_{\mu_1}$ and let $f_1 \in \KK[\widetilde O]_{\mu_1}$ be the corresponding function.
Put also $f_2 = f_{\mu_2 -\alpha}$ for short.
For every $\lambda \in \widehat \Gamma$ and $g \in \KK[\widehat X]_\lambda$ one has
\begin{multline} \label{eqn_comm_rel_vert}
[\partial, f_2 e_\alpha](g) = \partial (f_2 e_\alpha)(g) - (f_2 e_\alpha)\partial(g) =\\
(\partial f_2) (e_\alpha g) + f_2 \partial(e_\alpha g) - \langle \rho, \lambda \rangle f_2 e_\alpha (f_1 g) =\\
\langle \rho, \mu_2 - \alpha \rangle f_1f_2 (e_\alpha g) + \langle \rho, \lambda \rangle f_1f_2 (e_\alpha g) - \langle \rho, \lambda \rangle f_2 (e_\alpha f_1) g - \langle \rho, \lambda \rangle f_1f_2 (e_\alpha g) =\\
\langle \rho, \mu_2 - \alpha \rangle f_1f_2 (e_\alpha g) - \langle \rho, \lambda \rangle f_2 (e_\alpha f_1) g.
\end{multline}

If $\langle \rho, \mu_2 - \alpha \rangle = 0$ and $\langle \alpha^\vee, \mu_1 \rangle = 0$,  then $\dim \widetilde{\mathcal L}_{\mu_1} =1$, hence $f_1$ is proportional to~$f_{\mu_1}$, hence $e_\alpha f_1 = 0$ and $[\partial, f_2 e_\alpha](g) = 0$, whence~(\ref{prop_comm_rel_2_a}).

If $\langle \rho, \mu_2 - \alpha \rangle = 0$ and $\langle \alpha^\vee, \mu_1 \rangle \ge 1$, then $\mu_1 + \mu_2 - \alpha  \in \mathfrak R_{\rho}(\mathfrak F(X))$ and $[\partial, f_2 e_\alpha](g) = - \langle \rho, \lambda \rangle f_2 (e_\alpha f_1)g$, so that $[\partial, f_2 e_\alpha] \in \widetilde{\mathcal L}_{\mu_1+\mu_2 - \alpha}$.
If $f_1 = e_{-\alpha}f_{\mu_1}$, then $[\partial,f_2e_\alpha]$ corresponds to the function $-\langle \alpha^\vee, \mu_1 \rangle f_{\mu_1+\mu_2 - \alpha} \in \KK[\widehat X]_{\mu_1+\mu_2-\alpha}$, whence~(\ref{prop_comm_rel_2_b}).

If $\langle \rho, \mu_2 - \alpha \rangle \ge 1$ and $\langle \alpha^\vee, \mu_1 \rangle = 0$, then again $\dim \widetilde{\mathcal L}_{\mu_1} =1$, hence $f_1$ is proportional to~$f_{\mu_1}$, hence $e_\alpha f_1 = 0$ and $[\partial, f_2 e_\alpha] = \langle \rho, \mu_2 - \alpha \rangle f_1f_2 e_\alpha \in \widetilde{\mathcal K}_{\mu_1+\mu_2}$.
If $f_1 = f_{\mu_1}$, then $\langle \rho, \mu_2 - \alpha \rangle f_{\mu_1 + \mu_2 - \alpha} e_\alpha$, whence~(\ref{prop_comm_rel_2_c}).

If $\langle \rho, \mu_2 - \alpha \rangle \ge 1$ and $\langle \alpha^\vee, \mu_1 \rangle \ge 1$, then $\langle \rho', \mu_1+\mu_2 - \alpha \rangle \ge 0$ for all $\rho' \in \mathfrak F^1(X)$, whence by the completeness of the fan we get $\mu_1 + \mu_2 - \alpha = 0$, which implies $\langle \rho, \mu_2 - \alpha \rangle = \langle \alpha^\vee, \mu_1 \rangle = 1$.
If $f_1 = f_{\mu_1}$, then from~(\ref{eqn_comm_rel_vert}) we obtain $[\partial, f_2e_\alpha](g) = e_\alpha g$, which implies $[\mathcal L_{\mu_1}, \mathcal K_{\mu_2}] \supset \mathcal K_\alpha$.
For $f_1 = e_{-\alpha}f_{\mu_1}$ formula~(\ref{eqn_comm_rel_vert}) yields
\[
[\partial, f_2e_\alpha](g) = (e_{-\alpha}f_1)f_2 (e_\alpha g) - \langle \rho, \lambda \rangle g = (e_{-\alpha}f_1)f_2 (e_\alpha g) - \langle \rho, \lambda \rangle g + \frac12 h_\alpha g - \frac12 h_\alpha g.
\]
One can check that $(e_{-\alpha}f_1)f_2 (e_\alpha g) - \langle \rho, \lambda \rangle g + \frac12 h_\alpha g = \langle \frac12 \alpha^\vee - \rho, \lambda \rangle g = \frac12 \langle \sigma, \lambda \rangle g$, which implies $[\mathcal L_{\mu_1}, \mathcal K_{\mu_2}] = \mathcal K_{\alpha} \oplus \KK\nu_{\sigma}$ and $[[\varepsilon_{-\alpha}, \xi_{\mu_1}], f_{\mu_2-\alpha}\varepsilon_\alpha] = \frac12 \eta_\alpha - \frac12 \nu_{\sigma}$ and completes the proof of~(\ref{prop_comm_rel_2_d}).
\end{proof}

Propositions~\ref{prop_comm_rel_stand}, \ref{prop_comm_rel_compl}, \ref{prop_comm_vert}, and~\ref{prop_comm_rel_2} yield the commutation relations between all summands in decomposition~(\ref{eqn_aut_group}) of the Lie algebra $\mathfrak a$ of the connected automorphism group $A$ of~$X$, which uniquely determines the Lie algebra structure on~$\mathfrak a$.

\appendix

\section{Some formulas for matrix exponentials}
\label{sec_matrix_exponentials}

In this appendix we present several matrix identities, which are used in the proof of Proposition~\ref{prop_ext_to_G}(\ref{prop_ext_to_G_b}). (Similar identities without matrix realizations may be also obtained by using the Chevalley commutator formulas; see~\cite[\S\,9]{VaP}).
We realize the Lie algebras of type $\mathsf A_2$, $\mathsf B_2$ ($= \mathsf C_2$), $\mathsf G_2$ as the sets of all matrices of the form
\begin{gather*}
\begin{pmatrix}
t_1 & x_{10} & x_{11} \\
y_{10} & t_2 & x_{01} \\
y_{11} & y_{01} & -t_1-t_2
\end{pmatrix}, \quad
\begin{pmatrix}
t_1 & x_{10} & x_{11} & x_{21} \\
y_{10} & t_2 & x_{01} & x_{11} \\
y_{11} & y_{01} & -t_2 & -x_{10} \\
y_{21} & y_{11} & -y_{10} & -t_1
\end{pmatrix}, \\
\begin{pmatrix}
t_1+t_2 & x_{10} & x_{11} & \sqrt2 x_{21} & x_{31} & x_{32} & 0 \\
y_{10} & t_1 & x_{01} & -\sqrt2 x_{11} & x_{21} & 0 & -x_{32} \\
y_{11} & y_{01} & t_2 & \sqrt2 x_{10} & 0 & -x_{21} & -x_{31} \\
\sqrt2 y_{21} & -\sqrt2 y_{11} & \sqrt2 y_{10} & 0 & -\sqrt2 x_{10} & \sqrt2 x_{11} & -\sqrt2 x_{21} \\
y_{31} & y_{21} & 0 & -\sqrt2 y_{10} & -t_2 & -x_{01} & -x_{11} \\
y_{32} & 0 & -y_{21} & \sqrt2 y_{11} & -y_{01} & -t_1 & -x_{10} \\
0 & -y_{32} & -y_{31} & -\sqrt2 y_{21} & -y_{11} & -y_{10} & -t_1-t_2
\end{pmatrix},
\end{gather*}
respectively.
(The given realization of $\mathsf G_2$ is taken from~\cite[Appendix~A]{AP}.)
If $\mathfrak g$ is one of the above Lie algebras, then the set of all upper-triangular (and also the set of all lower-triangular) matrices in~$\mathfrak g$ is a Borel subalgebra and the set of all diagonal matrices in~$\mathfrak g$ is a Cartan subalgebra.
The two simple roots of~$\mathfrak g$ are denoted by $\alpha_1, \alpha_2$ (in the second and third cases $\alpha_1$ is short and $\alpha_2$ is long).
For every positive root $i\alpha_1 + j\alpha_2$, the corresponding root vector $e_{i\alpha_1+j\alpha_2} \in \mathfrak g$ is defined by $x_{ij} = 1$ and all the other coordinates being zero.
We also denote $u_{ij}(t) = \exp(te_{i\alpha + j\beta})$ where in each concrete case $\alpha$ and $\beta$ are specified in the proof of Proposition~\ref{prop_ext_to_G}(\ref{prop_ext_to_G_b}).

Type $\mathsf A_2$:

Case~\ref{no4} in Table~\ref{table_ab}.
\begin{equation} \label{eqn_no4}
u_{10}(x_{10})u_{11}(x_{11})u_{01}(x_{01})u_{10}(-s) = u_{10}(x_{10}-s)u_{11}(x_{11}+x_{01}s)u_{01}(x_{01})
\end{equation}

Type $\mathsf B_2$:

Case~\ref{no6} in Table~\ref{table_ab}.
\begin{equation} \label{eqn_no6}
u_{10}(x_{10})u_{11}(x_{11})u_{01}(x_{01})u_{10}(-s) = u_{10}(x_{10}-s)u_{11}(x_{11}-2x_{01}s)u_{01}(x_{01})
\end{equation}

Case~\ref{no7} in Table~\ref{table_ab}.
\begin{multline} \label{eqn_no7}
u_{10}(x_{10})u_{11}(x_{11})u_{12}(x_{12})u_{01}(x_{01})u_{10}(-s) =\\
u_{10}(x_{10}-s)u_{11}(x_{11}-x_{01}s)u_{12}(x_{12}-x_{01}^2s)u_{01}(x_{01})
\end{multline}

Case~\ref{no9} in Table~\ref{table_ab}.
\begin{multline} \label{eqn_no9}
u_{10}(x_{10})u_{11}(x_{11})u_{21}(x_{21})u_{01}(x_{01})u_{10}(-s) =\\
u_{10}(x_{10}-s)u_{11}(x_{11}+x_{01}s)u_{21}(x_{21}+2x_{11}s+x_{01}s^2)u_{01}(x_{01})
\end{multline}

Type $\mathsf G_2$:

Case~\ref{no11} in Table~\ref{table_ab}.
\begin{equation} \label{eqn_no11}
u_{10}(x_{10})u_{11}(x_{11})u_{01}(x_{01})u_{10}(-s) = u_{10}(x_{10}-s)u_{11}(x_{11}-3x_{01}s)u_{01}(x_{01})
\end{equation}

Case~\ref{no12} in Table~\ref{table_ab}.
\begin{multline} \label{eqn_no12}
u_{10}(x_{10})u_{11}(x_{11})u_{12}(x_{12})u_{21}(x_{21})u_{01}(x_{01})u_{10}(-s)=\\
u_{10}(x_{10}-s)u_{11}(x_{11}+2x_{01}s)u_{12}(x_{12}+3x_{01}^2s)u_{21}(x_{21}-3x_{11}s-3x_{01}s^2)u_{01}(x_{01})
\end{multline}

Case~\ref{no13} in Table~\ref{table_ab}.
\begin{multline} \label{eqn_no13}
u_{10}(x_{10})u_{11}(x_{11})u_{12}(x_{12})u_{13}(x_{13})u_{23}(x_{23})u_{01}(x_{01})u_{10}(-s) =\\
u_{10}(x_{10}-s)u_{11}(x_{11}-x_{01}s)u_{12}(x_{12}+x_{01}^2s)u_{13}(x_{13}+x_{01}^3s) \times \\
\times u_{23}(x_{23}+(x_{13}-3x_{01}x_{12})s-x_{01}^3s^2)u_{01}(x_{01})
\end{multline}

Case~\ref{no15} in Table~\ref{table_ab}.
\begin{multline} \label{eqn_no15}
u_{10}(x_{10})u_{11}(x_{11})u_{21}(x_{21})u_{31}(x_{31})u_{32}(x_{32})u_{01}(x_{01})u_{10}(-s) = \\
u_{10}(x_{10}{-}s)u_{11}(x_{11}{+}x_{01}s)u_{21}(x_{21}{-}2x_{11}s{-}x_{01}s^2)u_{31}(x_{31}{+}3x_{21}s{-}3x_{11}s^2{-}x_{01}s^3)\times \\\times u_{32}(x_{32}{+}(-3x_{11}^2{+}3x_{01}x_{21})s{-}6x_{01}x_{11}s^2{-}2x_{01}^2s^3)u_{01}(x_{01})
\end{multline}


\end{document}